\begin{document}

\title{On the Geometry and Refined Rate of Primal-Dual Hybrid Gradient for Linear Programming}
\author{Haihao Lu\thanks{The University of Chicago, Booth School of Business (haihao.lu@chicagobooth.edu).} \and Jinwen Yang\thanks{The University of Chicago, Department of Statistics (jinweny@uchicago.edu).}}
\date{December 2023}

\maketitle

\begin{abstract}
    We study the convergence behaviors of primal-dual hybrid gradient (PDHG) for solving linear programming (LP). PDHG is the base algorithm of a new general-purpose first-order method LP solver, PDLP, which aims to scale up LP by taking advantage of modern computing architectures. Despite its numerical success, the theoretical understanding of PDHG for LP is still very limited; the previous complexity result relies on the global Hoffman constant of the KKT system, which is known to be very loose and uninformative. In this work, we aim to develop a fundamental understanding of the convergence behaviors of PDHG for LP and to develop a refined complexity rate that does not rely on the global Hoffman constant. We show that there are two major stages of PDHG for LP: in Stage I, PDHG identifies active variables and the length of the first stage is driven by a certain quantity which measures how close the non-degeneracy part of the LP instance is to degeneracy; in Stage II, PDHG effectively solves a homogeneous linear inequality system, and the complexity of the second stage is driven by a well-behaved local sharpness constant of the system. This finding is closely related to the concept of partial smoothness in non-smooth optimization, and it is the first complexity result of finite time identification without the non-degeneracy assumption. An interesting implication of our results is that degeneracy itself does not slow down the convergence of PDHG for LP, but near-degeneracy does.

\end{abstract}


\section{Introduction}\label{sec:intro}


Linear programming (LP) is one of the most fundamental and important classes of optimization problems in operation research and computer science with a vast range of applications, such as network flow, revenue management, transportation, scheduling, packing and covering, and many others \cite{boyd2004convex,dantzig2002linear,anderson2000hotel,bowman1956production,charnes1954stepping,hanssmann1960linear,liu2008choice,manne1960linear}. Since the 1940s, LP has been extensively studied in both academia and industry. The state-of-the-art methods to solve LP problems, simplex methods \cite{dantzig1998linear} and interior-point methods (IPMs) \cite{karmarkar1984new}, are quite reliable to provide solutions with high accuracy and they serve as the base algorithms for nowadays commercial LP solvers. The success of both methods depends heavily on the efficient factorization methods to solve linear systems arising in the updates, which makes both algorithms highly challenging to further scale up. There are two fundamental reasons for this: (1) the storage of the factorization is quite memory-demanding and it usually requires significantly more memory than storing the original LP instance; (2) it is highly challenging to take advantage of modern computing architectures such as distributed computing and high-performance computing on GPUs when solving linear systems because factorization is sequential in nature.

Recent applications surge the interest in developing new algorithms for LPs with scale far beyond the capability of simplex methods and IPMs. {Matrix-free, i.e., no need to solve linear system, is a central feature of promising candidate algorithms, which guarantees low per-iteration computational cost and being friendly to distributed computation.} In this sense, first-order methods (FOMs) have become an attractive solution. The update of FOMs requires only the gradient information and is known for its capability of parallelization, thanks to the recent development in deep learning. In the context of LP, the computational bottleneck of FOMs is merely matrix-vector multiplication which is fairly cheap and exhibits easily distributed implementation, in contrast to solving linear systems for simplex methods and IPMs that is costly and highly nontrivial to parallelize. 

One notable instance that exemplifies the effectiveness of such methodology is the recent development of an open-source LP solver PDLP\footnote{The solver is open-sourced at \href{https://developers.google.com/optimization}{Google OR-Tools}.}~\cite{applegate2021practical}. A distributed version of PDLP has been used to solve real-world LP instances with as many as 92 billion non-zeros in the constraint matrix, which is a hundred to a thousand times larger than the scale state-of-the-art commercial LP solvers can solve~\cite{blog}. 
The base algorithm of PDLP is primal-dual hybrid gradient (PDHG) \cite{chambolle2011first}, a form of operator splitting method with alternating updates between the primal and dual variables. The implementation of PDLP also involves a few other enhancements/heuristics on top of PDHG, such as restart, preconditioning, adaptive step-size, etc, to further improve the numerical performance~\cite{applegate2021practical}. A significant difference between PDLP and other commercial LP solvers is that PDLP is totally matrix-free, namely, there is no necessity for PDLP to solve any linear system. The main computational bottleneck is matrix-vector multiplication which is much cheaper for solving large instances and more tailored for large-scale application in the distributed setting.

\begin{figure}[ht]
	\centering
	\hspace{-1Cm}
	\includegraphics[width=0.5\textwidth]{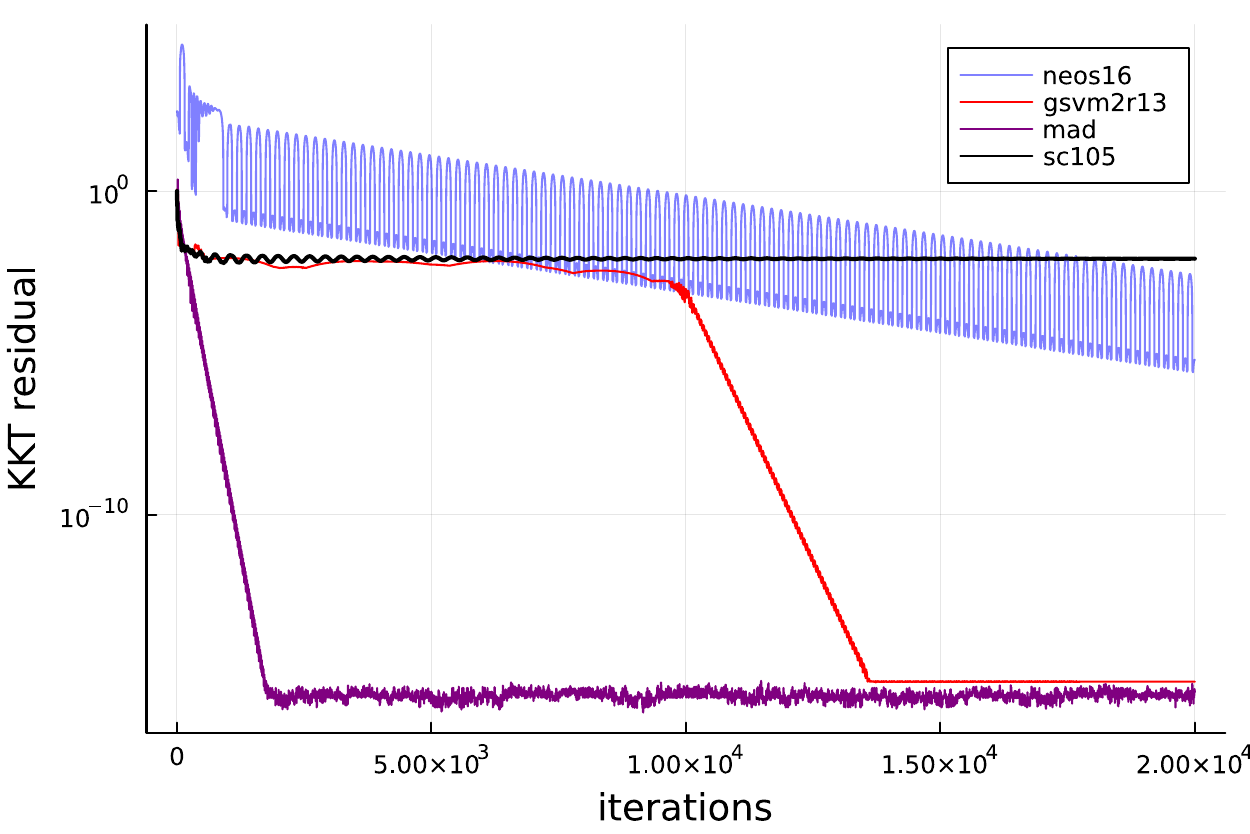}
	\caption{PDHG on four LP relaxation of instances from \texttt{MIPLIB}}
	\label{fig:pdhg}
\end{figure}

Despite the numerical success, there is a huge gap between the theoretical understanding and the practice of PDHG for LP. Recent works~\cite{lu2022infimal,applegate2023faster} present the complexity theory of (restarted) PDHG for LP, which shows that the iterates of PDHG linearly converge to an optimal solution, but the linear convergence rate depends on the global Hoffman constant of the KKT system, which is known to be exponentially loose. On the other hand, it is evident that the numerical performance of the algorithm does not depend on the overly-conservative Hoffman constant; instead, the algorithm often exhibits a two-stage behavior: an initial sublinear convergence followed by a linear convergence. For instances, Figure \ref{fig:pdhg} shows the representative behaviors of PDHG on four instances from \href{https://miplib.zib.de/tag_collection.html}{\texttt{MIPLIB 2017}}. The convergence patterns of the algorithm differ dramatically across these LP instances. The instance \texttt{mad} converges linearly to optimality within only few thousands of steps. Instances \texttt{neos16} and \texttt{gsvm2r13} eventually reach the linear convergence stage but beforehand there exists a relatively flat slow stage. The instance \texttt{sc105} does not exhibit linear rate within twenty thousand iterations. Furthermore, the eventual linear rates are not equivalent: \texttt{gsvm2r13} exhibits much faster linear rate than \texttt{neos16}. Similar observation holds for the ``warm-up" sublinear stage: \texttt{neos16} has much shorter sublinear period than \texttt{gsvm2r13}. The global linear convergence with the conservative Hoffman constant~\cite{lu2022infimal,applegate2023faster} is clearly not enough to interpret the diverse empirical behaviors. The goal of this paper is to bridge such gap by answering the following two questions:
\begin{itemize}
    \item How to understand the two-stage behaviors of PDHG for LP, and what are the geometric quantities that drive the length of the first stage and the convergence rate of the second stage?
    \item Can we obtain complexity theory of PDHG for LP without using the overly-conservative global Hoffman constant?
\end{itemize}

More formally, we consider standard-form LP:
\begin{align}\label{eq:standardform}
    \begin{split}
        \min_{x\geq 0}\ \  & c^T x \\
        s.t.\ \  & Ax=b \ ,
    \end{split}
\end{align}
and its primal-dual form:
\begin{align}\label{eq:minmax}
    \begin{split}
        \min_{x\geq 0}\max_{y}\; c^Tx-y^TAx+b^Ty \ .
    \end{split}
\end{align}

\begin{algorithm}
    \renewcommand{\algorithmicrequire}{\textbf{Input:}}
    \renewcommand{\algorithmicensure}{\textbf{Output:}}
    \caption{Primal Dual Hybrid Gradient (PDHG) for \eqref{eq:minmax}}
    \label{alg:pdhg}
    \begin{algorithmic}[1]
        \REQUIRE Initial point $z^0=(x^0,y^0)$, step-size $s>0$.
        \FOR{$k=0,1,...$}
        \STATE       
        $x^{k+1}=\mathrm{proj}_{\mathbb R_+^n}\pran{x^k-s(c-A^Ty^k)}$
        \STATE $y^{k+1}=y^k-s (A(2x^{k+1}-x^{k})-b)$
        \ENDFOR
    \end{algorithmic}
\end{algorithm}

Algorithm \ref{alg:pdhg} presents the primal-dual hybrid gradient method (PDHG, a.k.a Chambolle and Pock algorithm~\cite{chambolle2011first}) for LP \eqref{eq:minmax}. Without loss of generality, we assume the primal and the dual step-sizes are the same throughout the paper. This can be achieved by rescaling the problem instance~\cite{applegate2023faster}. The previous works~\cite{chambolle2016ergodic,fercoq2022quadratic,lu2022infimal} show the following complexity of PDHG for LP to achieve $\epsilon$-accuracy solution:
\begin{equation}\label{eq:existing-rate}
    \mathcal O\pran{\frac{\|A\|_2^2}{1/H^2}\log\pran{\frac{R}{\epsilon}}} \ ,
\end{equation}
where $R$ is essentially an upper bound on the norm of the iterates and $H$ is the Hoffman constant of the KKT system of LP:
\begin{equation*}
    Ax=b,\; x\geq 0,\; A^Ty\leq c,\; \frac{1}{R}(c^Tx-b^Ty)\leq 0 \ .
\end{equation*}
The major issue of the above complexity is the reliance on the global Hoffman constant $H$, which is notoriously conservative. Consider a general linear inequality system $Fx\leq g$. A commonly-used characterization to its Hoffman constant~\cite{pena2021new} is 
\begin{equation}\label{eq:hoffman-intro}
    H = \max_{\substack{J\in [m]\\A_J\;\text{full row rank}}}\frac{1}{\min_{\substack{v\in \mathbb R_+^{|J|}\\ \|v\|_2=1}}\|A_J^Tv\|_2} \ ,
\end{equation}
The inner part of the Hoffman constant is  an extension of the minimal non-zero singular value of the submatrix $F_J$, which is expected to appear in the linear convergence rate of first-order methods~\cite{nesterov2013introductory}. However, the outer maximization optimizes over exponentially many subsets of linear constraints, which makes the Hoffman constant overly conservative and cannot characterize the behaviors of the algorithms. Intuitively this is because when calculating the global Hoffman constant, one needs to look at the local geometry on every boundary set (i.e., extreme points, edges, faces, etc) of the feasible region, and consider the worst-case situation. See Appendix \ref{app:example} for an example where the Hoffman constant can be arbitrarily loose. 

In this paper, we show that the performance of PDHG for LP does not rely on the overly-conservative global Hoffman constant; instead, the algorithm exhibits a two-stage behavior:
\begin{itemize}
    \item In the first stage (see Section \ref{sec:stage-1} for more details), the algorithm aims to identify the non-degenerate variables. {Notice that the algorithm may not converge to an optimal solution that satisfies strict complementary slackness, which we call degeneracy in this paper, and this definition is consistent with the partial smoothness literature. We further call the primal variables that satisfy strict complementary slackness the non-degenerate variables.} This stage finishes within a finite number of iterations, and the convergence rate in this stage is sublinear. The driving force of the first stage is how the non-degenerate part of the LP is close to degeneracy.
    \item In the second stage (see Section \ref{sec:stage-2} for more details), the algorithm effectively solves a homogeneous linear inequality system. The algorithm converges linearly to an optimal solution, and the driving force of the linear convergence rate is a local sharpness constant for the homogeneous linear inequality system. This local sharpness constant is much better than the global Hoffman constant, and it is a generalization of the minimal non-zero singular value of a certain matrix. Intuitively, this happens due to the structure of the homogeneous linear inequality system so that one just needs to focus on the local geometry around the origin (See~\cite{pena2023easily} for a detailed characterization), avoiding going through the exponentially-many boundary set as in the calculation of global Hoffman constant.
\end{itemize}

To gain more intuitions of the two-stage convergence behavior, we consider a simple yet representative class of two-dimension house-shaped dual LPs parameterized by  $0\le \delta\le \kappa<1$ (see Figure \ref{fig:house-fig}):
\begin{align}\label{eq:house}
    \begin{split}
        & \ \max \; y_2 \\
        & \ \;\; \mathrm{s.t.}\;\; y_1\geq -1,\; y_1 \leq 1 \\
        & \ \quad\quad\;\; y_2\geq -1,\; y_2\leq \kappa-\delta \\ 
        & \ \quad\quad\;\; y_1+\frac{1}{\kappa} y_2 \leq 1,\; -y_1+\frac{1}{\kappa} y_2 \leq 1 \ .
    \end{split}
\end{align}
Parameter $\delta$ essentially measures how close the dual LP is to degeneracy. When $\delta=0$ (i.e., the top three constraints in Figure \ref{fig:house-fig} intersect), the problem is degenerate; when $\delta$ is close to $0$, the problem is near-degenerate; when $\delta$ is reasonably large, the problem is far away from degeneracy. Parameter $\kappa$ roughly measures the condition number of the constraint matrix, i.e., the ratio between the minimal and the maximal singular values of the matrix. Figure \ref{fig:house-result} presents the numerical performance of PDHG on this two-dimension instance with different choices of $\delta$ and $\kappa$. Indeed, with a proper choice of the parameters $\delta$ and $\kappa$, the convergence behavior of Figure \ref{fig:house-result} mimics well the real \texttt{MIPLIB} instances shown in Figure \ref{fig:pdhg}. Furthermore, one can see that the smaller the value of $\kappa$ is, the slower the eventual linear convergence is; the smaller the value of $\delta$ is, the longer the initial slow convergence stage. Yet, if $\delta=0$, i.e., the problem is degenerate, the problem does not have the slow first stage. The above numerical observations are not limited to this two-dimension example, and they are formalized for general LP in Section \ref{sec:stage-1} and Section \ref{sec:stage-2}.

\pgfmathsetmacro{\utau}{1}
\pgfmathsetmacro{\aa}{3}
\pgfmathsetmacro{\bb}{1/10}
\begin{figure}
\centering
\begin{subfigure}{0.4\textwidth}
  \centering
  \hspace{-1cm}
  \begin{tikzpicture}[scale=0.7,every node/.style={scale=1.5}]
        \begin{axis}[xlabel={},ylabel={}, ytick={0}, xtick={0}, yticklabels={}, xticklabels={}, extra x ticks={0, 1, 2},extra x tick labels={$-1$, $0$, $1$},  extra y ticks={2, 1.7, 1, 0}, extra y tick labels={$\kappa$, $\kappa-\delta$, $0$, $-1$},legend pos=outer north east, legend cell align=left, xmin=-0.25, xmax=2.25, ymin=-0.25, ymax=2.4, y=2.8cm,  x=2.8cm,]
        \filldraw [fill=gray!40] (0,0) -- (2,0) -- (2,1) -- (0,1) -- (0,0);
        \addplot[forget plot,color=black,line width=1.5pt, fill=gray!40] coordinates {(0,0) (0,1) (0.7,1.7) (1.3,1.7) (2,1) (2,0)};
        \addplot[forget plot,color=black,line width=1.5pt] coordinates {(0.7,1.7) (1,2) (1.3,1.7)};
        \addplot[forget plot,color=black,line width=1.5pt] coordinates {(0,1.7) (2,1.7)};
        \addplot[forget plot,color=black,line width=1.5pt] coordinates {(0,0) (2,0)};
        \addplot[-Stealth, color=black,line width=1.5pt] coordinates {(2.1,1.8) (2.1,2.3)};
        \end{axis}
        \end{tikzpicture}
  \caption{The LP instance \eqref{eq:house}}
  \label{fig:house-fig}
\end{subfigure}%
\begin{subfigure}{0.5\textwidth}
  \centering
  \includegraphics[width=\textwidth]{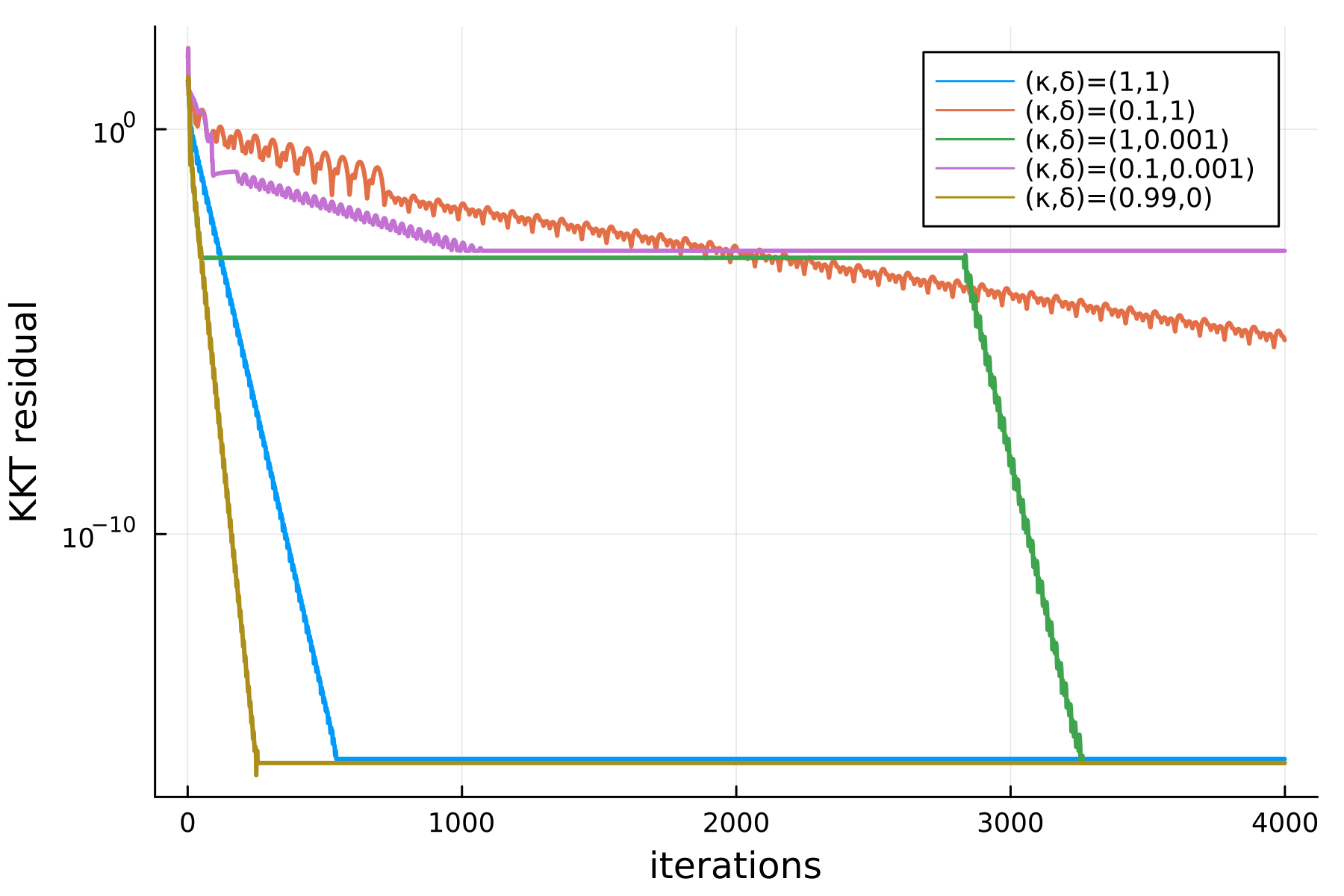}
  \caption{Performance of PDHG}
  \label{fig:house-result}
\end{subfigure}
\caption{Plots to illustrate the geometry of the LP instance \eqref{eq:house}, and the numerical behaviors of PDHG for solving the LP instance with different parameter $\kappa,\delta$.}
\label{fig:house}
\end{figure}

Indeed, this two-stage behavior has been documented in the literature of interior-point methods and non-smooth optimization. For example, \cite{wright1997primal,guler1993convergence,ye1992finite} discuss the two-stage behaviors of interior-point method (IPM) for LP: IPM has linear convergence in the first stage and super-linear convergence in the second stage. The phase transition happens when certain active basis are identified, and the length of the first phase depends on a certain metric that measures how close the analytical center of the optimal solution set (i.e., the converging optimal solution) is to degeneracy. In the context of non-smooth optimization, there are also fruitful results studying identification of first-order methods under partial smoothness~\cite{wright1993identifiable,lewis2016proximal,liang2017activity,liang2017local}. However, almost all of these works require the non-degeneracy condition, which, in the context of LP, refers to the algorithm converges to an optimal solution that satisfies strict complementary slackness. Unfortunately, this condition barely holds for real-world LP instances with PDHG as well as many other problems, and thus it is also called ``irrepresentable condition'' in the literature~\cite{fadili2019model,fadili2018sensitivity}. Note that the identification results of IPMs also have an implicit reliance on the non-degeneracy condition, since the converging optimal solution of IPMs  (i.e., analytical center of optimal solution face) always satisfies strict complementary slackness~\cite{wright1997primal,ye1992finite,guler1993convergence}. {In contrast to these literatures, our work presents the first complexity result on finite time identification without the non-degeneracy assumption, in the context of LP. Indeed, our results suggest that degeneracy itself does not slow down  the convergence  for first-order methods, but near-degeneracy does.}

\subsection{Contributions}
The goal of the paper is to provide a theoretical foundation for PDLP, a new first-order method LP solver based on PDHG. The contributions of the paper can be summarized as follows:
\begin{itemize}
    \item Motivated by the empirical behaviors of PDHG for LP, we propose a two-stage characterization of its convergence behaviors:
    \begin{itemize}
        \item In the first stage, PDHG attempts to identify the active variables and the near-to-degeneracy parameter drives the identification complexity.
        \item In the second stage, PDHG attempts to solve a homogeneous linear inequality system, and a local sharpness constant controls the linear convergence rate.
    \end{itemize}

    \item We provide the first linear convergence complexity of PDHG for LP without the dependency on the global Hoffman constant.

    \item Of independent interest, we present the first complexity bound on finite-time identification in the partial-smoothness context without the ``irrepresentative'' non-degeneracy condition.
\end{itemize}

\subsection{Related Literature}
{\bf FOM solvers for LP.} 
Recently, first-order methods become increasingly appealing for solving large LP due to their low per-iteration cost \cite{applegate2021infeasibility,applegate2023faster,basu2020eclipse,li2020asymptotically,lin2021admm,wang2017new,yen2015sparse}.
\begin{itemize}
    \item PDLP~\cite{applegate2021practical} utilizes primal-dual hybrid gradient method (PDHG) as its base algorithm and introduces practical algorithmic enhancements, such as presolving, preconditioning, adaptive restart, adaptive choice of step size, and primal weight, on top of PDHG. Right now, it has three implementations:  a prototype implemented in Julia (\href{https://github.com/google-research/FirstOrderLp.jl}{FirstOrderLp.jl}) for research purposes, a production-level C++ implementation that is included in Google \href{https://developers.google.com/optimization}{OR-Tools}, and an internal distributed version at Google. The internal distributed version of PDLP has been used to solve real-world problems with as many as 92 billion non-zeros~\cite{blog}, which is one of the largest LP instances that are solved by a general-purpose LP solver.
    
    \item \href{https://github.com/leavesgrp/ABIP}{ABIP}~\cite{lin2021admm,deng2022new} is an ADMM-based interior-point method. The core algorithm of ABIP is still a homogeneous self-dual embedded interior-point method. Instead of approximately minimizing the log-barrier penalty function with a Newton step, ABIP utilizes multiple steps of alternating direction method of multipliers (ADMM). The  $\mathcal O\left(\frac{1}{\epsilon}\log\left(\frac{1}{\epsilon}\right)\right)$ sublinear complexity of ABIP was presented in \cite{lin2021admm}. Recently, \cite{deng2022new} includes new enhancements, i.e., preconditioning, restart, hybrid parameter tuning, on top of ABIP (the enhanced version is called ABIP+). ABIP+ is numerically comparable to the Julia implementation of PDLP. ABIP+ now also supports a more general conic setting when the proximal problem associated with the log-barrier in ABIP can be efficiently computed.

    \item ECLIPSE~\cite{basu2020eclipse} is a distributed LP solver designed specifically for addressing large-scale LPs encountered in web applications. These LPs have a certain decomposition structure, and the effective constraints are usually much less than the number of variables. ECLIPSE looks at a certain dual formulation of the problem, then utilizes accelerated gradient descent to solve the smoothed dual problem with Nesterov's smoothing. This approach is shown to have $\mathcal O(\frac{1}{\epsilon})$ complexity, and it is used to solve web applications with $10^{12}$ decision variables~\cite{basu2020eclipse} and real-world web applications at LinkedIn platform~\cite{ramanath2022efficient,acharya2023promoting}.
    
    \item \href{https://github.com/cvxgrp/scs}{SCS}~\cite{o2016conic,o2021operator} tackles the homogeneous self-dual embedding of general conic programming using ADMM. As a special case of conic programming, SCS can also be used to solve LP. Each iteration of SCS involves projecting onto the cone and solving a system of linear equations with similar forms so that it only needs to store one factorization in memory. Furthermore, SCS supports solving linear equations with an iterative method, which only uses matrix-vector multiplications. 
\end{itemize}



{\bf Complexity theory of FOMs for LP.} 
One major drawback of applying first-order methods to solving LP is their slow tail convergence. Due to the lack of strong convexity, one can only obtain sublinear convergence rate when directly applying the classic results of FOMs on LP~\cite{nesterov2013introductory,beck2017first}. This suggests that FOMs cannot produce the high-accuracy solutions that are typically expected from LP solvers within a reasonable time limit. Luckily, it turns out that LP satisfies additional growth condition structures that can improve the convergence of FOMs from sublinear to linear. For example, \cite{eckstein1990alternating} showed that a variant of ADMM can have linear convergence for LP. It is shown in \cite{liang2017activity,liang2017local,liang2018local} that under non-degeneracy condition, many primal-dual algorithms have eventual linear convergence when solving LP, even though it may take a long time before reaching the linear local regime. Recently, \cite{applegate2023faster} introduced a sharpness condition that is satisfied by LP, and proposed a restarted scheme for a variety of primal-dual methods to solve LP. It turns out that the restarted variants of many FOMs, such as PDHG, extra-gradient method (EGM) and ADMM, can achieve global linear convergence, and such linear convergence is the optimal rate, namely, there is a worst-case LP instance such that no FOMs can achieve better than such linear rate. Later on, \cite{lu2022infimal} shows that PDHG (without restart) also has linear convergence for LP, and the linear convergence rate is slower than that in \cite{applegate2023faster}. However, the obtained linear convergence rates in \cite{applegate2023faster,lu2022infimal} depend on the global Hoffman constant of the KKT system of LP, which is known to be very loose and cannot characterize the behaviors of the algorithm. In this paper, we aim to develop a tighter complexity theory of PDHG for LP {without using global Hoffman constant}.

{\bf Primal-dual hybrid gradient method (PDHG).} PDHG was initially proposed for applications in image processing and computer vision \cite{chambolle2011first,condat2013primal,esser2010general,he2012convergence,zhu2008efficient}. The first convergence guarantee of PDHG was obtained in \cite{chambolle2011first}, which shows the $\mathcal O(1/k)$ sublinear ergodic convergence rate of PDHG for convex-concave primal-dual problems. Later on, simplified analyses for the $\mathcal O(1/k)$ sublinear ergodic rate of PDHG were presented in \cite{chambolle2016ergodic,lu2023unified}. More recently, it is shown that the last iterates of PDHG exhibit linear convergence under a mild regularity condition that is satisfied by many applications, including LP~\cite{fercoq2022quadratic,lu2022infimal}. Moreover, many variants of PDHG have been proposed, including adaptive version \cite{goldstein2015adaptive,malitsky2018first,pock2011diagonal,vladarean2021first} and stochastic version \cite{alacaoglu2022convergence,chambolle2018stochastic}. It is also shown that PDHG is equivalent to Douglas-Rachford Splitting up to a linear transformation \cite{liu2021acceleration,o2020equivalence}.

{\bf Identification and partial smoothness.} 
Active-set identification is a classic topic in constrained optimization and in non-smooth optimization, with both theoretical and computational importance.  Partial smoothness, a notion meaning smoothness along some directions and sharpness in other directions, plays a crucial role in the analysis of identification. Under the partial smoothness assumption, a refined analysis is proposed for analyzing the identification property of certain algorithms. More specifically, manifold identification of dual averaging is shown in \cite{lee2012manifold}. It is shown in \cite{liang2014local,liang2017activity} that the finite time identification of forward-backward-type methods. In \cite{liang2017local,liang2018local}, it is shown the active set identification and local convergence rate of many primal-dual algorithms including PDHG and ADMM. In \cite{poon2018local}, it shows that stochastic methods also share a similar identification property. Similar results are derived for Newton's method in \cite{lewis2021active}. {\blue The eventual sharp linear rate of Douglas-Rachford Splitting on basis pursuit is derived under non-degeneracy condition in \cite{demanet2016eventual}.}  In   \cite{davis2021subgradient}, the non-smooth dynamic of sub-gradient methods near active manifolds is studied. 

However, all the above works assume the non-degeneracy condition of the problem, namely, $0\in \mathrm{ri}(\mathcal F(x^*))$, where $\mathrm{ri}(\cdot)$ represents the relative interior of a set and $\mathcal F(x^*)$ denotes the sub-differential set at the converging optimal solution $x^*$.  The only exceptions that do not require the non-degeneracy condition are~\cite{fadili2018sensitivity,fadili2019model}, where they showed that the active set of the iterates can potentially be larger than the active set of optimal solutions because of degeneracy. Unfortunately, it is unclear how to check the non-degeneracy condition in general, thus it is also called ``irrepresentable condition''~\cite{fadili2018sensitivity,fadili2019model}. In the case of LP, this condition means that the converging optimal solution satisfies strict complimentary slackness, which unfortunately is almost never the case when using FOMs to solve real-world LP instances. To the best of our knowledge, this work is the first complexity result of finite-time identification without the non-degeneracy condition.

On a related note, the identification behaviors of interior-point methods (IPMs) for LP have been observed ~\cite{wright1997primal,ye1992finite,guler1993convergence}, where it was shown that IPM has super-linear convergence after identification. Indeed, these results implicitly utilize the fact that IPM always converges to the analytical center of the optimal face, which satisfies strict complementary slackness~\cite{wright1997primal,ye1992finite,guler1993convergence}.

\subsection{Organization}
We discuss the sharpness of homogeneous linear inequality system and collect necessary convergence results of PDHG in Section \ref{sec:pre}. The analysis of identification stage (Stage I) and local convergence (Stage II) are presented respectively in Section \ref{sec:stage-1} and \ref{sec:stage-2}. Section \ref{sec:numerical} illustrates and verifies the theoretical results through numerical experiments. We conclude the paper and propose several future research directions in Section \ref{sec:conclusion}.

\subsection{Notations}
For matrix $A\in \mathbb R^{m\times n}$, denote $A_j$ the $j$-th column of matrix $A$ and $A_J=(A_j)_{j\in J}\in \mathbb R^{m\times |J|}$, $J\subset [n]$. Denote $\|A\|_2$ the operator norm of a matrix $A$.
Denote $\|x\|_2$ the Euclidean norm for a vector $x$ and $\langle x, y\rangle$ for its associated inner product. For a positive definite matrix $M\succ 0$, let $\langle x, y\rangle_M=\langle x,M y\rangle$ and $\|x\|_M=\sqrt{x^TMx}$ the norm induced by the inner product. Let $\mathrm{dist}_M(z,\mathcal Z)$ represent the distance between point $z$ and set $\mathcal Z$ under the norm $\|\cdot\|_M$, that is, $\mathrm{dist}_M(z,\mathcal Z)=\min_{u\in\mathcal Z}\|u-z\|_M$. In particular, $\mathrm{dist}(z,\mathcal Z)=\min_{u\in\mathcal Z}\|u-z\|_2$. Denote $P_{\mathcal Z}(z):=\arg\min_{u\in\mathcal Z}\{\|u-z\|_2\}$ the projection onto set $\mathcal Z$ under $l_2$ norm. Denote the optimal solution set to \eqref{eq:minmax} as $\mathcal Z^*$. $f(x)=\mathcal O(g(x))$ means that for sufficiently large $x$, there exists constant $C$ such that $f(x)\leq Cg(x)$. Let $\iota_C(\cdot)$ be the indicator function of set $\mathcal C$. Let $z=(x,y)$, and denote $\mathcal F(z)=\mathcal F(x,y)=\begin{pmatrix}
        c-A^Ty+\partial \iota_{\mathbb R_+^n}(x) \\ b-Ax
    \end{pmatrix}$ the sub-differential of \eqref{eq:minmax}. Let $P_s=\begin{pmatrix}
          \frac{1}{s} I & -A^T \\ -A & \frac{1}{s} I
        \end{pmatrix}$ and
    $\mathbf{1}_n$ represents all 1's vector of length $n$. For set $A$ and $B$, let $A\backslash B:=\{x\;|\: x\in A, x\notin B\}$. Let $[n]:=\{1,2,...,n\}$.

\section{Preliminaries}\label{sec:pre}
We here present preliminary results on sharpness of primal-dual problems, convergence properties of PDHG and sharpness of homogeneous linear inequality system that we will use later.

\subsection{Sharpness of primal-dual problems}\label{sec:sharp-condition}
Consider a generic convex-concave primal-dual problem
\begin{equation}\label{eq:pd}
    \min_{x\in \mathcal{X}} \max_{y\in \mathcal{Y}} L(x, y) \ ,
\end{equation}
where $L(x,y)$ is a convex function in $x$ and a concave function in $y$. We use $z=(x,y)$ to represent the primal and the dual solution together.

Normalized duality gap was introduced in \cite{applegate2023faster} as a progress metric for primal-dual problems:
\begin{mydef}
    For a primal-dual problem~\eqref{eq:pd} and a solution $z=(x,y)$, the normalized duality gap with radius $r$ is defined as
    \begin{equation}\label{eq:ndg}
    \rho_r(z)=\max_{\hat z \in W_r(z)}\frac{L(x,\hat y)-L(\hat x,y)}{r} \ ,
\end{equation}
where $W_r(z)=\{ \hat z\in\mathcal Z \;|\; \Vert z-\hat z\Vert_2\leq r \}$ is a ball centered at $z$ with radius $r$ intersected with $\mathcal Z=\mathcal X \times \mathcal Y$.
\end{mydef}

The normalized duality gap $\rho_r(z)$ is a valid progress metric for LP, since it provides an upper bound on the KKT residual:
\begin{prop}[{\cite[Lemma 4]{applegate2023faster}}]\label{prop:prop-res-gap}
It holds for any $z=(x,y)$ with $x\geq 0$ and $\|z\|_2\le R$ that {\blue for any $r\in(0,R]$,}
\begin{align*}
    \rho_r(z)\geq \left\Vert \begin{pmatrix}
        Ax-b \\ [A^Ty-c]^+ \\ \frac 1R [c^Tx-b^Ty]^+
    \end{pmatrix}  \right\Vert_2 \ .
\end{align*}
\end{prop}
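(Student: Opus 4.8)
## Proof Proposal

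The plan is to establish the inequality by constructing, for each of the three components on the right-hand side, a specific comparison point $\hat z = (\hat x, \hat y) \in W_r(z)$ whose associated gap $L(x,\hat y) - L(\hat x, y)$ recovers (a lower bound on) that component, and then to note that since $\rho_r(z)$ is a maximum over all of $W_r(z)$, it dominates each such construction simultaneously — which, after taking the max over the three pieces, gives a bound on the $\ell_2$ norm up to a constant. For LP in the form \eqref{eq:minmax}, we have $L(x,y) = c^Tx - y^TAx + b^Ty$, so $L(x,\hat y) - L(\hat x, y) = c^Tx - \hat y^TAx + b^T\hat y - c^T\hat x + y^TA\hat x - b^Ty$. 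Holding one variable fixed at $z$ and perturbing the other isolates the primal residual $Ax - b$ (by moving $\hat y$) and the dual residual $[A^Ty - c]^+$ together with the complementarity/objective gap (by moving $\hat x$ within $\mathbb{R}^n_+$).

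First I would handle the primal feasibility term: fix $\hat x = x$ and choose $\hat y = y + r \frac{b - Ax}{\|b-Ax\|_2}$ (when $Ax \neq b$), which lies in $W_r(z)$ since $\mathcal{Y} = \mathbb{R}^m$ is unconstrained; this yields $L(x,\hat y) - L(\hat x, y) = (\hat y - y)^T(b - Ax) = r\|Ax - b\|_2$, hence $\rho_r(z) \geq \|Ax - b\|_2$. Second, for the dual and objective terms I would fix $\hat y = y$ and choose $\hat x \geq 0$ with $\|\hat x - x\|_2 \leq r$ that maximizes $(c - A^Ty)^T(x - \hat x)$ over that constrained ball; the key subproblem is $\max_{\hat x \geq 0, \|\hat x - x\| \leq r}(c - A^Ty)^T(x-\hat x)$, and one checks that moving in the direction of $[A^Ty - c]^+$ (the feasible descent direction for $x \mapsto (c-A^Ty)^Tx$ respecting $x \geq 0$) certifies $\rho_r(z) \geq \|[A^Ty-c]^+\|_2$ — here the hypothesis $r \leq R$ and $\|z\|_2 \leq R$ is what guarantees there is enough room inside the nonnegativity constraint, or more precisely is used to control the normalization. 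Third, for the objective-gap term $\frac{1}{R}[c^Tx - b^Ty]^+$, I would take $\hat x = (1 - r/\|z\|_2) x \geq 0$ (scaling toward the origin, which stays feasible and within distance $r$ when $r \leq \|z\|_2 \leq R$) and $\hat y = y$; this produces a gap of order $\frac{r}{\|z\|_2}(c^Tx - y^TAx + \ldots)$, and combining with the already-handled residual terms and using $\|z\|_2 \leq R$ converts this into the claimed $\frac{1}{R}[c^Tx - b^Ty]^+$ contribution.

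The main obstacle I anticipate is the third component: unlike the first two, the objective-gap term does not arise from a "clean" one-variable perturbation, because scaling $\hat x$ toward the origin simultaneously changes the $-\hat y^T A\hat x$ and $c^T\hat x$ pieces, so one must carefully bundle it with the primal residual term $Ax - b$ to cancel cross terms, and the bookkeeping for the radius constraint ($r/\|z\|_2$ versus $r/R$, and the case $\|z\|_2 < r$) requires the newly-added hypothesis $r \in (0, R]$. A secondary subtlety is that the three comparison points are different, so strictly we get $\rho_r(z) \geq \max$ of three quantities rather than their $\ell_2$ combination; the fix is the standard observation $\max\{a,b,c\} \geq \frac{1}{\sqrt{3}}\|(a,b,c)\|_2$, and one should check whether the paper's stated bound absorbs this $\sqrt{3}$ into the constant or whether a more careful simultaneous construction (perturbing $\hat x$ and $\hat y$ together along a single combined direction) is needed to get the norm on the nose — I would first attempt the combined-direction construction, falling back to the $\max$-to-norm argument if the cross terms prove unwieldy. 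Since this is quoted as a known lemma from \cite{applegate2023faster}, I expect the intended proof is the direct construction; I would reproduce it, being careful that all comparison points genuinely lie in $W_r(z)$ given only $x \geq 0$, $\|z\|_2 \leq R$, and $r \leq R$.
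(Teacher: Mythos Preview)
The paper does not give its own proof of this proposition; it is simply quoted as \cite[Lemma 4]{applegate2023faster} and used as a black box. Consequently there is nothing in the present paper to compare your argument against.

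That said, your outline is the standard route and the first two pieces are correct and clean: with $\hat x=x$ and $\hat y=y+r(b-Ax)/\|b-Ax\|$ one gets $\rho_r(z)\ge\|Ax-b\|_2$, and with $\hat y=y$ and $\hat x=x+r[A^Ty-c]^+/\|[A^Ty-c]^+\|$ (which is $\ge0$ since both summands are) one gets $\rho_r(z)\ge\|[A^Ty-c]^+\|_2$; in fact combining these two directions into a single $\hat z$ already yields $\rho_r(z)\ge\sqrt{\|Ax-b\|_2^2+\|[A^Ty-c]^+\|_2^2}$ with no loss. For the third term your instinct is right: scaling $\hat z=(1-\lambda)z$ with $\lambda=r/R\le 1$ keeps $\hat x\ge0$, has $\|\hat z-z\|=\lambda\|z\|\le r$, and gives gap exactly $\lambda(c^Tx-b^Ty)$, so $\rho_r(z)\ge\tfrac{1}{R}[c^Tx-b^Ty]^+$.

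Your worry about the $\sqrt{3}$ (really $\sqrt{2}$, after the first combination) is the genuine subtlety. The naive ``take the max of the pieces'' only gives $\rho_r(z)\ge\tfrac{1}{\sqrt 2}\|(\cdot)\|_2$, and when you try the simultaneous construction $\hat z-z=\alpha([A^Ty-c]^+,-(Ax-b))-\beta z$ the cross term $2\alpha\beta\bigl(y^T(Ax-b)-x^T[A^Ty-c]^+\bigr)$ does not vanish and can be positive, so the length constraint is not simply $\alpha^2 a^2+\beta^2\|z\|^2\le r^2$. Getting the inequality with the exact $\ell_2$ norm on the right therefore requires a sharper argument than the one you sketch (in \cite{applegate2023faster} this is handled by a more careful choice of test point or an equivalent reformulation). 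If you pursue your fallback, be aware you will land a constant factor short of the stated bound; since this paper only \emph{uses} the proposition and never proves it, that discrepancy would not be detected by comparing with anything here.
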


The sharpness of a primal-dual problem is defined in \cite{applegate2023faster}:
\begin{mydef}[{\cite[Definition 1]{applegate2023faster}}]
    We say a primal-dual problem is $\alpha$-sharp on the set $\mathcal S$ if $\rho_r(z)$ is $\alpha$-sharp on $\mathcal S$ for all $r$, i.e., it holds for all $z\in \mathcal S$ that
    \begin{equation}\label{eq:sharp}
        \alpha\mathrm{dist}(z,\mathcal Z^*)\leq \rho_r(z) \ .
    \end{equation}
\end{mydef}
The following proposition shows that the primal-dual form of LP is sharp.
\begin{prop}[{\cite[Lemma 5]{applegate2023faster}}]\label{prop:global-sharp}
    LP in the primal-dual form \eqref{eq:minmax} is a sharp problem on set $\{z\;|\;\|z\|_2\le R\}$, i.e.,
    the normalized duality gap $\rho_r(z)$ defined in \eqref{eq:ndg} satisfies
    \begin{equation*}
        \frac{1}{H}\mathrm{dist}(z,\mathcal Z^*) \leq \rho_r(z) \ ,
    \end{equation*}
    for any $z\in\{z\;|\;\|z\|_2\le R\}$ {\blue and $r\in(0,R]$}, where $H$ is the Hoffman constant of the KKT system of LP:
    \begin{equation*}
    Ax=b,\; A^Ty\leq c,\; x\geq 0,\; \frac 1R (c^Tx-b^Ty)\leq 0 \ .
    \end{equation*}
\end{prop}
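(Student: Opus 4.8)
The plan is to reduce the sharpness claim for the primal-dual gap $\rho_r(z)$ to a standard Hoffman-bound estimate on the KKT system. The key structural fact is that the normalized duality gap of LP can be computed essentially in closed form. Writing $L(x,y) = c^Tx - y^TAx + b^Ty$, we have $L(x,\hat y) - L(\hat x, y) = (c - A^Ty)^T x - (c-A^T\hat y)^T \hat x + b^T\hat y - b^Ty$, which, after grouping, separates into a term linear in $\hat x$ and a term linear in $\hat y$ over the constraint set $W_r(z) = \{\hat z : \|\hat z - z\|_2 \le r,\ \hat x \ge 0\}$. So the first step is to make this computation explicit and observe that $\rho_r(z) \ge \frac{1}{r}\bigl(L(x,\hat y) - L(\hat x, y)\bigr)$ for \emph{any} particular choice of $\hat z \in W_r(z)$; we only need a good lower bound, not the exact maximizer.

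The second step is to exhibit a good test point $\hat z$. The natural candidate is to move from $z$ toward the projection $z^* = P_{\mathcal Z^*}(z)$ of $z$ onto the optimal set, i.e. $\hat z = z + t(z^* - z)$ for a suitable $t \in (0,1]$ (taking $t = \min\{1, r/\mathrm{dist}(z,\mathcal Z^*)\}$ so that $\hat z$ stays in the radius-$r$ ball; note $\hat x \ge 0$ is preserved since both $x \ge 0$ and $x^* \ge 0$ and we take a convex combination). By convexity of $L(x,\cdot)$ being affine here (and similarly in the first argument), $L(x,\hat y) - L(\hat x, y) \ge t\bigl(L(x,y^*) - L(x^*,y)\bigr)$, so it suffices to lower bound the primal-dual gap against an \emph{optimal} point $z^* = (x^*, y^*)$. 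This reduces everything to showing $L(x,y^*) - L(x^*,y) \ge \frac{1}{H}\,r\,\mathrm{dist}(z,\mathcal Z^*)/t$ — equivalently, that the gap to the optimal set controls the distance to the optimal set with constant $1/H$, which is precisely where the Hoffman bound enters.

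The third step is the Hoffman argument proper. By LP duality, $L(x,y^*) - L(x^*,y) = (c^Tx - b^Ty^*) + (b^Ty - c^Tx^*) \ge (c^Tx - b^Ty) + 0$ when $z^*$ is optimal (using $c^Tx^* = b^Ty^*$ at optimality and rearranging). One then uses Proposition~\ref{prop:prop-res-gap} to bound $\rho_r(z)$ below by the KKT residual vector $\bigl(Ax-b,\ [A^Ty-c]^+,\ \tfrac1R[c^Tx-b^Ty]^+\bigr)$, and invokes the definition of the Hoffman constant $H$ for the system $Ax=b,\ A^Ty \le c,\ x \ge 0,\ \tfrac1R(c^Tx - b^Ty) \le 0$: the distance from $z$ to the solution set of this system (which is exactly $\mathcal Z^*$, the optimal set, since feasibility plus reversed weak duality forces optimality) is at most $H$ times the norm of the residual of these constraints evaluated at $z$. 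Combining, $\mathrm{dist}(z,\mathcal Z^*) \le H \cdot (\text{KKT residual}) \le H \rho_r(z)$, which is the claim. Since this is essentially the proof of \cite[Lemma 5]{applegate2023faster}, I would mostly cite it, but spell out the two nontrivial points: why the solution set of the augmented KKT system coincides with $\mathcal Z^*$, and the passage from $\rho_r$ to the residual via Proposition~\ref{prop:prop-res-gap}.

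The main obstacle is the bookkeeping around the radius constraint: one must confirm that shrinking the step to $t = \min\{1, r/\mathrm{dist}(z,\mathcal Z^*)\}$ still yields the clean bound $\alpha\,\mathrm{dist}(z,\mathcal Z^*) \le \rho_r(z)$ with $\alpha = 1/H$ independent of $r$, i.e. the factor $t$ and the factor $1/r$ must cancel correctly; this works because $\rho_r(z) \ge \frac1r \cdot t \cdot (\text{gap to }z^*)$ and $t/r = \min\{1/r,\ 1/\mathrm{dist}(z,\mathcal Z^*)\} \ge 1/R \cdot (\text{something})$ — one needs $r \le R$ (hence the bracketed hypothesis $r \in (0,R]$ added in the statement) and $\|z\|_2 \le R$ to ensure $\mathrm{dist}(z,\mathcal Z^*)$ is comparable to or bounded by a quantity that keeps the constant clean. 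Handling the degenerate case $\mathrm{dist}(z,\mathcal Z^*) = 0$ (where the inequality is trivial) should be noted separately.
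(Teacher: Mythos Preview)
The paper does not supply its own proof of this proposition; it is quoted directly from \cite[Lemma 5]{applegate2023faster}. Your essential argument---combining Proposition~\ref{prop:prop-res-gap} with the Hoffman bound for the augmented KKT system---is correct and is exactly the standard proof. Concretely: for $z$ with $x\ge 0$ and $\|z\|_2\le R$, Proposition~\ref{prop:prop-res-gap} gives
\[
\rho_r(z)\ \ge\ \left\|\bigl(Ax-b,\ [A^Ty-c]^+,\ \tfrac1R[c^Tx-b^Ty]^+\bigr)\right\|_2\,;
\]
since $x\ge 0$ the term $[-x]^+$ vanishes, so the right-hand side is the full residual of the system $Ax=b,\ A^Ty\le c,\ x\ge 0,\ \tfrac1R(c^Tx-b^Ty)\le 0$. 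Hoffman's inequality then yields $\mathrm{dist}(z,\mathcal Z^*)\le H\cdot(\text{residual})$, after noting that the solution set of this system is exactly $\mathcal Z^*$ (the last inequality together with weak duality forces $c^Tx=b^Ty$, hence optimality). This is the entire proof.

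Your first two steps---interpolating toward the projection $P_{\mathcal Z^*}(z)$ and trying to lower-bound $L(x,y^*)-L(x^*,y)$---are superfluous once Proposition~\ref{prop:prop-res-gap} is invoked, and they contain an error: the identity you write, $L(x,y^*)-L(x^*,y)=(c^Tx-b^Ty^*)+(b^Ty-c^Tx^*)$, drops the bilinear cross terms $y^TAx^*-y^{*T}Ax$ and is not correct as stated. Likewise the ``main obstacle'' bookkeeping about the $t/r$ cancellation disappears entirely in the direct route, since Proposition~\ref{prop:prop-res-gap} already absorbs the radius dependence. So the proposal is right in substance but should be pruned to just the third step.
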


\subsection{Convergence of PDHG iterates}\label{sec:pdhg-iterate}
In this subsection, we present some basic convergence properties of PDHG on LP in literature. Many of these convergence results hold for generic primal-dual problems, and we here state them in the context of LP for convenience.


The next lemma presents the non-expansiveness and convergence of PDHG on LP:
\begin{lem}[\cite{chambolle2016ergodic}]\label{lem:property}
Consider the iterations $\{z^k\}_{k=0}^\infty$ of PDHG on LP (Algorithm \ref{alg:pdhg}). Let $\mathcal Z^*$ be the optimal solution set to \eqref{eq:minmax}. Then it holds for any $k$ that

(a) (non-expansiveness) $\Vert z^{k+1}-z^* \Vert_{P_s} \leq \Vert z^k-z^*\Vert_{P_s} $ for any $z^*\in\mathcal Z^*$,\\
(b) (convergence) there exists $z^*\in\mathcal Z^*$ such that $z^*=\lim_{k\rightarrow\infty} z^k$, and\\
(c) (property of the optimal solution set) $\|z^k-\tilde z^*\|_{P_s}\geq \| z^*-\tilde z^*\|_{P_s}$ for any $\tilde z^*\in\mathcal Z^*$, where $z^*$ is the converging optimal solution defined in (b).
\end{lem}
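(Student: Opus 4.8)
The statement to prove is Lemma~\ref{lem:property}, which collects three basic facts about PDHG iterates: non-expansiveness in the $P_s$-norm, convergence to a point in $\mathcal Z^*$, and the property that the limit point is the $P_s$-projection-like closest optimal point in an appropriate sense. Since these are attributed to \cite{chambolle2016ergodic}, my plan would be to reconstruct the standard operator-splitting argument rather than invoke a black box.

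\textbf{Setup as a proximal-point/forward-backward iteration.} The first move is to recognize PDHG (Algorithm~\ref{alg:pdhg}) as a preconditioned proximal-point method applied to the monotone inclusion $0 \in \mathcal F(z)$, where $\mathcal F$ is the saddle subdifferential operator defined in the notations. Concretely, one checks by writing out the optimality conditions of the two proximal updates that the PDHG step is equivalent to $0 \in \mathcal F(z^{k+1}) + P_s(z^{k+1} - z^k)$, i.e. $z^{k+1} = (I + P_s^{-1}\mathcal F)^{-1}(z^k)$. This requires $P_s \succ 0$, which holds precisely when $s\|A\|_2 < 1$; I would state this step-size condition (it is implicitly assumed throughout, matching \eqref{eq:existing-rate}). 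Because $\mathcal F$ is maximal monotone (it is the subdifferential of a proper closed convex-concave function, equivalently $\mathcal F$ is a sum of a skew-symmetric linear map and a normal-cone operator), the resolvent $(I + P_s^{-1}\mathcal F)^{-1}$ is well-defined, single-valued, and firmly nonexpansive in the $P_s$-inner product.

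\textbf{Parts (a) and (c).} Part (a) is then immediate: any $z^* \in \mathcal Z^*$ satisfies $0 \in \mathcal F(z^*)$, so $z^*$ is a fixed point of the resolvent, and firm nonexpansiveness gives $\|z^{k+1} - z^*\|_{P_s}^2 \le \|z^k - z^*\|_{P_s}^2 - \|z^{k+1} - z^k\|_{P_s}^2 \le \|z^k - z^*\|_{P_s}^2$. Part (c) follows from the firm-nonexpansiveness inequality summed over iterations: the quantity $\langle z^k - \tilde z^*, z^{k+1} - z^k\rangle_{P_s} \le -\tfrac12\|z^{k+1}-z^k\|_{P_s}^2 \le 0$ for every $k$, so the scalar sequence $\|z^k - \tilde z^*\|_{P_s}$ is nonincreasing; passing to the limit along $k$ (using part (b)) yields $\|z^* - \tilde z^*\|_{P_s} \le \|z^k - \tilde z^*\|_{P_s}$ for all $k$. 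Equivalently, $z^*$ minimizes $\|\cdot - \tilde z^*\|_{P_s}$ over the trajectory's limit, which is the stated inequality.

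\textbf{Part (b).} For convergence, I would run the standard Opial-type argument. From part (a) the sequence $\{z^k\}$ is bounded (in the $P_s$-norm, hence in the Euclidean norm since $P_s \succ 0$), so it has a convergent subsequence with some limit $\bar z$. Telescoping the firm-nonexpansiveness inequality shows $\sum_k \|z^{k+1}-z^k\|_{P_s}^2 < \infty$, hence $z^{k+1} - z^k \to 0$; combined with the fact that the resolvent's graph is closed (outer semicontinuity of $\mathcal F$), any subsequential limit $\bar z$ satisfies $0 \in \mathcal F(\bar z)$, i.e. $\bar z \in \mathcal Z^*$. Then Opial's lemma — using that $\|z^k - z^*\|_{P_s}$ is convergent for every $z^* \in \mathcal Z^*$ (part (a)) — upgrades the subsequential convergence to convergence of the whole sequence to a single point $z^* \in \mathcal Z^*$.

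\textbf{Main obstacle.} The genuinely delicate point is part (c) and making sure the limit point identified in (b) is exactly the one for which (c) holds; the rest is a routine instantiation of Krasnosel'skii--Mann / proximal-point theory in a weighted norm. I would need to be careful that the monotone-operator machinery is stated in the $P_s$-geometry consistently (firm nonexpansiveness, Minty's theorem for maximality, closedness of the resolvent graph all transfer, but the bookkeeping must be uniform), and that the step-size restriction $s\|A\|_2 < 1$ ensuring $P_s \succ 0$ is in force. Since the excerpt cites \cite{chambolle2016ergodic} directly, an acceptable shortcut is to simply quote their Theorem for (a)--(b) and supply the short summation argument above for (c).
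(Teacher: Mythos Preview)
The paper does not prove this lemma at all; it is stated as a citation to \cite{chambolle2016ergodic} with no accompanying argument. So there is no ``paper's own proof'' to compare against, and your reconstruction via the preconditioned proximal-point / firm-nonexpansiveness machinery is both correct and the standard way to obtain these facts.

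One minor remark: you flag part (c) as ``the genuinely delicate point,'' but in fact it is an immediate corollary of (a) and (b) and needs none of the inner-product bookkeeping you sketch. By (a) applied with $\tilde z^*$ in place of $z^*$, the scalar sequence $\|z^k - \tilde z^*\|_{P_s}$ is nonincreasing; by (b) and continuity of the norm it converges to $\|z^* - \tilde z^*\|_{P_s}$; a nonincreasing sequence dominates its limit, which is exactly (c). The only parts with real content are (a) (firm nonexpansiveness in the $P_s$-geometry, requiring $P_s \succ 0$) and (b) (the Opial argument), both of which you handle correctly.
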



The next theorem presents the sublinear and linear convergence rate of PDHG on LP. These results can be obtained utilizing a similar proof technique as in \cite{lu2022infimal}. We present the formal proof in Appendix \ref{app:proof_thm_1} for completeness.
\begin{thm}\label{thm:thm-linear-z}
Consider the iterations $\{z^k\}_{k=0}^\infty$ of PDHG on LP (Algorithm \ref{alg:pdhg}). Suppose the step-size $s\leq \frac{1}{2\Vert A\Vert_2}$. Then it holds for any $k\geq 0$ that

(a) (sublinear rate)   
\begin{equation}
    \|z^{k+1}-z^{k}\|_{P_s}\leq \mathrm{dist}_{P_s^{-1}}(0,\mathcal F(z^k))\leq \frac{\mathrm{dist}_{P_s}(z^0,\mathcal Z^*)}{\sqrt k}\ .
\end{equation}

(b) (linear rate) 
\begin{equation}
         \Vert z^k-z^* \Vert_{P_s}\leq 4\exp\pran{-\frac{k}{2\left\lceil 4e/(s\alpha)^2\right\rceil}} \mathrm{dist}_{P_s}(z^0,\mathcal Z^*) \ ,
    \end{equation}
where $\alpha$ is the sharpness constant of LP along iterates, i.e., $\alpha\mathrm{dist}(z^k,\mathcal Z^*)\leq \rho_r(z^k)$ for any $k\geq 0$.
\end{thm}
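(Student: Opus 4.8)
The plan is to establish both parts of Theorem~\ref{thm:thm-linear-z} by working with the fixed-point residual of PDHG viewed as a proximal-type operator in the $\|\cdot\|_{P_s}$ norm. First I would recall that PDHG on \eqref{eq:minmax} is an instance of the proximal point method applied to the monotone operator $\mathcal F$ with the preconditioner $P_s$; concretely, $z^{k+1}$ solves $0\in \mathcal F(z^{k+1}) + P_s(z^{k+1}-z^k)$, so that $P_s(z^k-z^{k+1})\in \mathcal F(z^{k+1})$. The condition $s\le \frac{1}{2\|A\|_2}$ guarantees $P_s\succeq \frac{1}{2s}I\succ 0$, so $\|\cdot\|_{P_s}$ is a genuine norm and the operator is firmly nonexpansive in it (this is where Lemma~\ref{lem:property}(a) comes from). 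From the firm nonexpansiveness one gets the standard monotonicity of the residual: $\|z^{k+1}-z^k\|_{P_s}$ is nonincreasing in $k$, and moreover $\sum_{j=0}^{k-1}\|z^{j+1}-z^j\|_{P_s}^2 \le \|z^0-z^*\|_{P_s}^2$ for any $z^*\in\mathcal Z^*$ (a telescoping of the nonexpansiveness inequality together with the cross term). Combining monotonicity with this summability, $k\|z^k-z^{k+1}\|_{P_s}^2 \le \sum_{j=0}^{k-1}\|z^{j+1}-z^j\|_{P_s}^2\le \mathrm{dist}_{P_s}(z^0,\mathcal Z^*)^2$, which after taking square roots and optimizing over $z^*$ yields the rightmost inequality of part~(a). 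The middle inequality $\|z^{k+1}-z^k\|_{P_s}\le \mathrm{dist}_{P_s^{-1}}(0,\mathcal F(z^k))$ follows because $P_s(z^{k-1}-z^k)\in\mathcal F(z^k)$ gives $\mathrm{dist}_{P_s^{-1}}(0,\mathcal F(z^k))\le \|P_s(z^{k-1}-z^k)\|_{P_s^{-1}} = \|z^{k-1}-z^k\|_{P_s}$, and then monotonicity of the residual upgrades $\|z^{k-1}-z^k\|_{P_s}$ to $\|z^k-z^{k+1}\|_{P_s}$; alternatively one can directly bound $\|z^{k+1}-z^k\|_{P_s}$ by the residual at $z^k$ using the variational characterization of the update.

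For part~(b), the idea is to bootstrap the sublinear rate using the sharpness hypothesis $\alpha\,\mathrm{dist}(z^k,\mathcal Z^*)\le \rho_r(z^k)$. The key lemma to invoke (as in \cite{applegate2023faster,lu2022infimal}) is that the normalized duality gap is controlled by the fixed-point residual: there is a constant so that $\rho_r(z^k)\le \frac{C}{r}\|z^k-z^{k+1}\|_{P_s}\cdot(\text{something})$, or more precisely that $\rho_r(z)$ at a PDHG iterate is bounded by a multiple of $\mathrm{dist}_{P_s}(z,\mathcal Z^*)/\sqrt{m}$ after $m$ further PDHG steps, using part~(a) applied with $z^k$ as the new starting point. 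Specifically, run PDHG for $m = \lceil 4e/(s\alpha)^2\rceil$ additional steps from $z^k$: by part~(a), $\|z^{k+m}-z^{k+m+1}\|_{P_s}\le \mathrm{dist}_{P_s}(z^k,\mathcal Z^*)/\sqrt m$, and combining with the residual-to-gap bound and sharpness one derives $\mathrm{dist}_{P_s}(z^{k+m},\mathcal Z^*)\le \tfrac1e \,\mathrm{dist}_{P_s}(z^k,\mathcal Z^*)$ (the factor $4e$ inside the ceiling is exactly tuned so the accumulated constants collapse to $1/e$). Iterating this contraction every $m$ steps, together with the nonexpansiveness of Lemma~\ref{lem:property}(a) to handle the steps within a block, gives $\|z^k-z^*\|_{P_s}\le 4\exp(-k/(2m))\,\mathrm{dist}_{P_s}(z^0,\mathcal Z^*)$, with the leading constant $4$ absorbing the within-block slack and the translation from $\mathrm{dist}_{P_s}$ to the specific limit point $z^*$.

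The main obstacle will be the quantitative link between $\rho_r(z^k)$ and the PDHG fixed-point residual $\|z^k-z^{k+1}\|_{P_s}$ at an arbitrary iterate — i.e., showing that "small fixed-point residual implies small normalized duality gap" with the right constants so that the $4e$ and the factor $2$ in the exponent come out cleanly. This requires carefully relating the $W_r(z)$-maximization defining $\rho_r$ to the one-step progress of PDHG, essentially reproving the averaged-iterate $\mathcal O(1/k)$ ergodic gap bound of \cite{chambolle2016ergodic} but stated in terms of $\mathrm{dist}_{P_s}(z^0,\mathcal Z^*)$ rather than a diameter, and then using that the last iterate residual is no larger than the averaged residual by monotonicity. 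Since the statement explicitly points to \cite{lu2022infimal} for the technique and defers the full proof to Appendix~\ref{app:proof_thm_1}, I would structure the appendix proof as: (i) residual monotonicity and summability from firm nonexpansiveness; (ii) the $1/\sqrt k$ bound of part~(a); (iii) an ergodic gap bound converting residual to $\rho_r$; (iv) the restart-style bootstrapping argument above to get geometric decay with the stated constants.
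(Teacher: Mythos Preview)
Your plan is essentially the paper's proof: part~(a) is deferred to \cite{lu2022infimal} (firm nonexpansiveness $\Rightarrow$ residual monotonicity and summability $\Rightarrow$ the $1/\sqrt{k}$ bound), and part~(b) is exactly the block/``restart'' argument you describe --- chain sharpness with the sublinear bound over windows of length $m=\lceil 4e/(s\alpha)^2\rceil$ to get a $1/e$ contraction on $\mathrm{dist}_{P_s}^2(\cdot,\mathcal Z^*)$, then iterate. The factor $4$ arises concretely as $2\cdot e^{1/2}$: the $2$ comes from a separate lemma showing $\|z^k-z^*\|_{P_s}\le 2\,\mathrm{dist}_{P_s}(z^k,\mathcal Z^*)$ (triangle inequality plus nonexpansiveness applied to the projection of $z^k$), and the $e^{1/2}$ from taking the square root of the block contraction bound.

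Two small corrections. First, your initial derivation of the left inequality in~(a) goes the wrong way: from $P_s(z^{k-1}-z^k)\in\mathcal F(z^k)$ you get $\mathrm{dist}_{P_s^{-1}}(0,\mathcal F(z^k))\le\|z^{k-1}-z^k\|_{P_s}$, which is an \emph{upper} bound on the distance, not the lower bound you need; monotonicity cannot flip this. Your ``alternative'' is the correct route: for any $w\in\mathcal F(z^k)$, monotonicity of $\mathcal F$ against $-P_s(z^{k+1}-z^k)\in\mathcal F(z^{k+1})$ and Cauchy--Schwarz in the $P_s$ inner product give $\|z^{k+1}-z^k\|_{P_s}\le\|w\|_{P_s^{-1}}$. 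Second, the ``main obstacle'' you flag is much lighter than an ergodic gap argument: the paper simply uses the pointwise bound $\rho_r(z)\le \mathrm{dist}(0,\mathcal F(z))$ (from \cite[Proposition~1]{lu2021linear}), combines it with sharpness to get $\mathrm{dist}_{P_s}(z^k,\mathcal Z^*)\le \tfrac{2}{s\alpha}\,\mathrm{dist}_{P_s^{-1}}(0,\mathcal F(z^k))$ after the norm conversion of Lemma~\ref{lem:lem-norm}, and then plugs in part~(a). There is no need to revisit the $W_r$-maximization or the Chambolle--Pock ergodic analysis.
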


We comment that we assume the step-size $s\leq \frac{1}{2\|A\|_2}$ in Theorem \ref{thm:thm-linear-z} and throughout the paper later on. While many of our results hold for $s<\frac{1}{\|A\|_2}$, assuming $s\leq \frac{1}{2\|A\|_2}$ makes it easy to convert between $P_s$ norm and $\ell_2$ norm, as stated in Lemma~\ref{lem:lem-norm}:

\begin{lem}\label{lem:lem-norm}
    Suppose $s\leq \frac{1}{2\|A\|_2}$. It holds for any $z\in \mathbb{R}^{m+n}$ that:
    \begin{equation*}
        \sqrt{\frac{1}{2s}}\|z\|_2\leq \|z\|_{P_s} \leq \sqrt{\frac{2}{s}}\|z\|_2 \ .
    \end{equation*}
\end{lem}

\subsection{Sharpness of linear inequality system}\label{sec:homo-cone}

The existing linear convergence results of PDHG for LP~\cite{lu2022infimal,applegate2023faster} heavily depend on the Hoffman constant of the KKT system. However, it is well-known that the global Hoffman constant is generally very loose and cannot be easily characterized. Recently,~\cite{pena2023easily} presents a simple and tight bound on the Hoffman constant (i.e., inverse of sharpness constant) for homogeneous linear inequality system. We here present these characterizations and discuss their fundamental differences.


First, recall the definition of the sharpness constant of a linear inequality system:
\begin{mydef}
    Consider a linear inequality system $F x= g$, $\tilde F x \le \tilde g$ and its solution set $\mathcal X^*=\{x\in \mathbb R^n: F x= g, \tilde F x \le \tilde g\}$. We call $\alpha>0$ a sharpness constant to the system if it holds for any $x\in \mathbb R^n$ that
    $$
    \alpha \mathrm{dist}(x, \mathcal X^*) \le \left\Vert \begin{pmatrix}
                F x-g \\ (\tilde Fx-\tilde g)^+
            \end{pmatrix} \right\Vert_2\ .
    $$
\end{mydef}
The results of Hoffman~\cite{hoffman1952approximate,pena2021new} show the existence of a sharpness constant $\alpha$ for any linear inequality system. The value of sharpness constant $\alpha$ is the inverse of the Hoffman constant to the linear inequality system.

Indeed, it is highly difficult to provide simple characterization to sharpness constant $\alpha$. One characterization to $\alpha$ for system $Fx=g, \tilde Fx\leq \tilde g$ is described in \cite{pena2021new}:
\begin{equation}\label{eq:hoffman}
    \alpha = \min_{J\in\mathcal S(F,\tilde F)}\min_{\substack{v\in\mathbb R_+^J,\; z\in F(\mathbb R^{n})\\ \|(v,z)\|_2=1,\; \tilde F_J^Tv+Fz-u=0}}\|u\|_2 \ ,
\end{equation}
where $\mathcal S(F,\tilde F)=\{ J\subset [m]:\{(\tilde Fx+s,Fx):s\in \mathbb R^m,s_J\geq 0,x\in\mathbb R^n \} \mathrm{\; is\;a\;linear\;subspace}\}$.
Informally speaking, the inner minimization in \eqref{eq:hoffman} computes an extension of minimal positive singular value for a certain matrix, which is specified by an ``active set'' $J$ of constraints. The outer minimization takes the minimum of these extended minimal positive singular values over all possible ``active sets'' (intuitively, it goes through every extreme point, edge, face, etc., of $\mathcal{X}^*$).
While the inner minimization is expected when characterizing the behaviors of an algorithm, similar to the strong convexity in a minimization problem, there are usually exponentially many ``active sets'' for a polytope $\mathcal{X}^*$, thus $\alpha$ defined in \eqref{eq:hoffman} is known to be a loose bound and it is generally NP-hard to compute its exact value or even just a reasonable bound.


On the other hand, it turns out a simple characterization exists for the sharpness of a \emph{homogeneous} linear inequality system as shown recently in \cite{pena2023easily}. It does not require going through the exponential many ``active sets'' as in \eqref{eq:hoffman} and dramatically simplifies the characterization of the sharpness constant. 

\begin{prop}[Proposition 1-3, Theorem 1 in \cite{pena2023easily}]\label{prop:javier}
    Consider a linear inequality system $Fx=0$, $\tilde Fx\leq 0$ where $F\in\mathbb R^{m\times n}$ and $\tilde F\in\mathbb R^{\tilde m\times n}$, and its solution set $\mathcal X^*=\{x\in \mathbb R^n: F x= 0, \tilde F x \le 0\}$. Denote $\alpha$ the sharpness constant to the system, i.e., it holds for any $x\in \mathbb R^n$ that
    $
    \alpha \mathrm{dist}(x, \mathcal X^*) \le \left\Vert \begin{pmatrix}
                F x-0 \\ (\tilde Fx-0)^+
            \end{pmatrix} \right\Vert_2\ .
    $ Then:
    
    (a) There exists a unique partition $P\cup Q=\{1,...,\tilde m\}$ such that for $\tilde F=\begin{pmatrix}
        \tilde F_{P} \\ \tilde F_{Q}
    \end{pmatrix}$,
    \begin{align*}
        \begin{split}
            Fv=0, \; \tilde F_{P}v=0,\; \tilde F_{Q}v<0 \;for\;some\; v\in \mathbb R^{n} \ ,
        \end{split}
    \end{align*}
    and
    \begin{equation*}
        F^Tw_1+\tilde F_{P}^Tw_2 =  0\; for\; some \;w_1,w_2>0 \ .
    \end{equation*}

    (b) Moreover, define
    \begin{equation*}
        L:=\{ v: Fv=0,\; \tilde F_{P}v=0  \},\; K:=\{ v: \tilde F_{Q}v\leq 0 \} \ .    
    \end{equation*}
    It holds that
    \begin{equation}\label{eq:homo-sharp-new}
        \alpha(L,K)\cdot \min\{ \alpha_0(L),\alpha_0(K) \} \leq \alpha \leq \min\{ \alpha_0(L),\alpha_0(K) \} \ ,
    \end{equation}    
    where
    \begin{align}\label{eq:homo-sharp-1}
        \begin{split}
            \alpha_0(K)\geq\min_{w\geq 0,\|w\|_2=1}\| \tilde F_{Q}^Tw\|_2,\; \alpha_0(L)\geq\min_{\|v\|_2=1,Fv-w_1\leq 0,\tilde F_{P}v-w_2\leq 0} \| (w_1,w_2)\|_2 \ ,
        \end{split}
    \end{align}
    and
    \begin{equation}\label{eq:homo-sharp-2}
        \alpha(L,K) = \inf_{u\notin L\cap K} \frac{\max\{ \mathrm{dist}(u,L),\mathrm{dist}(u,K)\}}{\mathrm{dist}(u,L\cap K)} > 0 \ .
    \end{equation}
\end{prop}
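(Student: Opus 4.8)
The statement to be proven is Proposition~\ref{prop:javier}, which is explicitly attributed to Propositions 1--3 and Theorem 1 of \cite{pena2023easily}. Since the paper cites this as a known result, the "proof" here is really a roadmap of how one reconstructs the argument from \cite{pena2023easily}, so my plan is organized accordingly.

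\textbf{Part (a): existence and uniqueness of the partition.} The plan is to invoke a strong-alternative / facial-reduction argument on the cone $\mathcal X^* = \{x : Fx = 0,\ \tilde F x \le 0\}$. For each index $i \in \{1,\dots,\tilde m\}$, consider whether the constraint $\tilde F_i x \le 0$ can be made strict somewhere in $\mathcal X^*$: define $Q$ to be the set of $i$ such that there exists $v \in \mathcal X^*$ with $\tilde F_i v < 0$, and $P$ the complement. By a standard convexity/averaging argument (take a convex combination of the witnesses for each $i \in Q$), one gets a \emph{single} $v$ with $Fv = 0$, $\tilde F_P v = 0$, $\tilde F_Q v < 0$ simultaneously — this is the relative-interior point of the cone. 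For the dual statement $F^T w_1 + \tilde F_P^T w_2 = 0$ with $w_1, w_2 > 0$: this is precisely a theorem-of-the-alternative statement dual to "$\tilde F_i x = 0$ on all of $\mathcal X^*$ for $i \in P$", i.e., the implication $(Fx=0,\ \tilde F_P x \le 0) \Rightarrow \tilde F_i x = 0$; Motzkin's transposition theorem (or Stiemke's lemma applied componentwise, then aggregated) yields the strictly positive multipliers. Uniqueness of the partition follows because $Q$ is characterized intrinsically (whether a strict witness exists), independent of any choices.

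\textbf{Part (b): the sharpness decomposition.} With $L = \{v : Fv = 0,\ \tilde F_P v = 0\}$ (a linear subspace, by construction) and $K = \{v : \tilde F_Q v \le 0\}$ (a cone with nonempty interior relative to... well, with $\mathcal X^* = L \cap K$), the upper bound $\alpha \le \min\{\alpha_0(L), \alpha_0(K)\}$ is the easy direction: restricting the sharpness inequality to test points $x$ that already satisfy the $K$-constraints (so only the $L$-residual is active) gives $\alpha \le \alpha_0(L)$, and symmetrically for $K$; here $\alpha_0(L)$ and $\alpha_0(K)$ are the sharpness constants of the individual systems $Fx = 0, \tilde F_P x \le 0$ and $\tilde F_Q x \le 0$ respectively, and the bounds \eqref{eq:homo-sharp-1} on them are just the Hoffman-constant formula \eqref{eq:hoffman} specialized — for $K$ the only relevant active set is the full one (homogeneity + interior point), giving the clean min-singular-value expression; for $L$ similarly. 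The lower bound $\alpha \ge \alpha(L,K)\cdot\min\{\alpha_0(L),\alpha_0(K)\}$ is the substantive part: given $x$, one shows $\mathrm{dist}(x, L\cap K)$ is controlled by the larger of $\mathrm{dist}(x,L)$ and $\mathrm{dist}(x,K)$ up to the factor $1/\alpha(L,K)$ (this is essentially the definition \eqref{eq:homo-sharp-2}, and the positivity $\alpha(L,K) > 0$ is a compactness argument on the sphere using that $L$ is a subspace and $L\cap K \ne \{0\}$ generically, or a linear-regularity-type result for two sets one of which is a subspace), and then $\mathrm{dist}(x,L) \le \frac{1}{\alpha_0(L)}\|(Fx, (\tilde F_P x)^+)\|$ and $\mathrm{dist}(x,K) \le \frac{1}{\alpha_0(K)}\|(\tilde F_Q x)^+\|$ by the definitions of $\alpha_0$; chaining these and bounding both residual blocks by the full residual gives the claim.

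\textbf{Main obstacle.} The delicate point is the positivity $\alpha(L,K) > 0$ in \eqref{eq:homo-sharp-2} together with showing it genuinely governs the distance to the intersection — i.e., that two closed convex cones $L$ (a subspace) and $K$ exhibit \emph{linear regularity}: $\mathrm{dist}(x, L\cap K) \le \gamma\,\max\{\mathrm{dist}(x,L), \mathrm{dist}(x,K)\}$ for some finite $\gamma$. For general pairs of convex sets this can fail (the angle between them can be zero), but here the crucial structural input from part~(a) is that $\mathrm{ri}$ of the cone is nonempty in a compatible way — the existence of $v$ with $Fv = 0,\ \tilde F_P v = 0,\ \tilde F_Q v < 0$ means $L \cap \mathrm{int}(K) \ne \emptyset$ (relative to $\mathrm{span}$), which is exactly the constraint-qualification that forces a positive angle and hence $\alpha(L,K)>0$. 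Making this rigorous — verifying the interior/relative-interior bookkeeping and invoking (or proving) the relevant linear-regularity lemma for a subspace and a polyhedral cone meeting this CQ — is where the real work lies; everything else is bookkeeping with Hoffman's lemma and the triangle inequality. Since the paper takes this proposition as given from \cite{pena2023easily}, in the writeup I would state these steps and defer the detailed linear-regularity estimate to that reference.
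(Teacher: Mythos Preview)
The paper does not prove Proposition~\ref{prop:javier} at all: it is stated with the attribution ``Proposition 1--3, Theorem 1 in \cite{pena2023easily}'' and used as a black box, with no accompanying proof environment. Your roadmap is therefore not a reconstruction of the paper's argument but of the cited reference's; as such it is a reasonable sketch of the facial-reduction partition for part~(a) and the linear-regularity decomposition for part~(b), and you correctly flag the positivity $\alpha(L,K)>0$ (via $L\cap\mathrm{int}(K)\ne\emptyset$ from the strict witness $v$) as the substantive step. For the purposes of this paper, simply citing \cite{pena2023easily} is what is done and what suffices.
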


    


\begin{rem}
    Similar to the interpretation of the inner minimum in \eqref{eq:hoffman}, $\alpha_0(K)$ and $\alpha_0(L)$ can be interpreted as an extension of the minimal non-zero singular value of the matrices. They are indeed highly related to and can be lower-bounded by the distance-to-infeasibility defined in Renegar's condition number~\cite{renegar1995incorporating}.
    $\alpha(L,K)$ essentially measures the angle between $K$ and $L$ and it is strictly positive by the construction of $L$ and $K$~\cite{pena2023easily}.
    
\end{rem}

Compared to the characterization of sharpness constant of a non-homogeneous linear system~\eqref{eq:hoffman}, which requires computing the minimum over potentially exponentially many values, the bound in \eqref{eq:homo-sharp-new} for homogeneous linear system essentially just looks at the corresponding values of two sub-systems and combines them using the angle measurement $\alpha(L,K)$. As a result, the sharpness constant of a homogeneous system is much simpler and tighter than the non-homogeneous counterpart, and can be computed in polynomial time~\cite{pena2023easily}.



\section{Stage I: finite time identification}\label{sec:stage-1}
In this section, we characterize the initial slow convergence stage and the phase transition between the two stages as described in Section \ref{sec:intro}. It turns out that PDHG can identify the ``active basis'' of the converging optimal solution in the first stage. We show that this stage always finishes (i.e., the phase transition happens) within a finite number of iterations, and the length of the first phase is driven by a quantity that characterizes how close the non-degeneracy part of the optimal solution is to degeneracy. 

Such phase transition behavior is closely related to the concept of partial smoothness that was proposed in previous literature to understand non-smooth optimization and primal-dual algorithms~\cite{wright1993identifiable,lewis2011identifying,oberlin2006active,hare2004identifying,hare2007identifying,lewis2013partial,daniilidis2014orthogonal,lewis2022partial,liang2018local}. In contrast to the previous literature, we do not assume the non-degeneracy condition of the problem. In the context of LP, the non-degeneracy condition in partial smoothness refers to that the iterates converge to an optimal solution which satisfies strict complimentary slackness. Such condition is also called ``irrepresentable condition'' in the optimization  literature~\cite{fadili2018sensitivity,fadili2019model}, because it is highly rare for first-order methods to converge to a strictly complementary optimal solution in practice.

{\blue Indeed, the major difficulty of our analysis is to handle the degeneracy of the problem. To the best of our knowledge, this work is the first complexity result on finite-time identification without the non-degeneracy assumption.}


To describe the identification property of the algorithm, we first introduce some new notations.


\begin{mydef}\label{def:partition}
For a linear programming~\eqref{eq:minmax} and an optimal primal-dual solution $z^*=(x^*,y^*)$, we define the index partition $(N,B_1,B_2)$ of the primal variable coordinates as
    \begin{align*}
     \ N&=\{i\in [n]: c_i-A_i^Ty^*>0\} \\ \ B_1&=\{i\in [n]: c_i-A_i^Ty^*=0, x_i^*>0\} \\ \  B_2&=\{i\in [n]: c_i-A_i^Ty^*=0, x_i^*=0\} \ .
\end{align*}
\end{mydef}


Since $(x^*,y^*)$ is an optimal solution pair to the LP, it follows from complementary slackness that $x^*_N=0$. However, $(x^*,y^*)$ may not satisfy strict complementary slackness, i.e., $B_2$ can be a non-empty set. The lack of strict complementary slackness is called degeneracy herein. 

In analogy to the notions in simplex method, we call the elements in $N$ the non-basic variables, the elements in $B_1$ the non-degenerate basic variables, and the elements in $B_2$ the degenerate basic variables. Furthermore, we call the elements in $N\cup B_1$ the non-degenerate variables and the elements in $B_2$ the degenerate variables. We also denote $B=B_1\cup B_2$ as the set of basic variables. 

Next, we introduce the non-degeneracy metric $\delta$, which essentially measures how close the non-degenerate part of the optimal solution is to degeneracy:

\begin{mydef}\label{def:delta}
For a linear programming~\eqref{eq:minmax} and an optimal primal-dual solution pair $z^*=(x^*,y^*)$, we define the non-degeneracy metric $\delta$ as:
\begin{equation*}
    \delta := 
    \min\left\{\min_{i\in N} \frac{c_i-A_i^Ty^*}{\|A\|_2},\;\min_{i\in B_1} x_i^*\right\} \ .
\end{equation*}
\end{mydef}

    {\blue The non-degeneracy metric $\delta$ looks at the non-degenerate variables (i.e. $N \cup B_1$) and measures how close they are to degeneracy. For non-degenerate non-basic variables in $N$, it computes the minimal scaled reduced cost $\min_{i\in N}\frac{c_i-A_i^Ty^*}{\|A\|_2}$; for non-degenerate basic variables in $B_1$, it computes the minimal primal variable slackness $\min_{i\in B_1}x_i^*$; and then $\delta$ takes the minimum of the two. For every optimal solution $z^*$ to a linear programming, the non-degeneracy metric $\delta$ is always strictly positive. The smaller the $\delta$ is, the closer the non-degenerate part of the solution is to degeneracy. In the rest of this section, we show that the value of $\delta$ drives the identification of the algorithm.}

Next, we introduce the sharpness to a homogeneous conic system  $\alpha_{L_1}$ that arises in the complexity of identification. We can view it as a local sharpness condition of a homogeneous linear inequality system around the converging optimal solution. Following the results stated in Section \ref{sec:homo-cone}, $\alpha_L$ is an extension of the minimal non-zero singular value of a certain matrix and does not depend on the overly-conservative global Hoffman constant.


More formally, we consider the following homogeneous linear inequality system
\begin{equation}\label{eq:active-cone}
    A_B^Tv\leq 0,Au=0, u_{N\cup B_2}\geq 0, \frac 1R(c^Tu-b^Tv)\leq 0 \ ,
\end{equation}
and denote $\mathcal K:=\{(u,v)\;|\;A_B^Tv\leq 0,Au=0, u_{N\cup B_2}\geq 0, \frac 1R(c^Tu-b^Tv)\leq 0\}$ the feasible set of \eqref{eq:active-cone}.
Denote $\alpha_{L_1}$ the sharpness constant to \eqref{eq:active-cone}, i.e., for any $(u,v)\in\mathbb R^{m+n}$,
\begin{equation}\label{eq:def_alphaL}
    \alpha_{L_1}\mathrm{dist}((u,v),\mathcal K)\leq \left\|\begin{pmatrix}
        [A_B^Tv]^+ \\ Au \\ [-u_{N\cup B_2}]^+ \\ \frac 1   R[c^Tu-b^Tv]^+ \end{pmatrix}\right\|_2   \ .
\end{equation}


Theorem \ref{thm:thm-identification} presents the main result for this section. It essentially states that after a certain number of iterations, PDHG iterates are reasonably close to the converging optimal solution and the algorithm can identify the elements in set $N$ and $B_1$.
Furthermore, we present the complexity theory for identification in terms of the non-degenerate metric $\delta$ and the local sharpness constant $\alpha_{L_1}$. {\blue Notice that $\delta$ does not rely on the degenerate coordinates, i.e., those in $B_2$, which shows that degeneracy does not slow down the identification, in contrast to previous literature on partial smoothness.} We also comment that the set $B_2$ may not be identifiable by PDHG iterates due to the degeneracy of the problem.
\begin{thm}[Identification Complexity]\label{thm:thm-identification}
    Consider PDHG for solving \eqref{eq:minmax} with step-size $s\leq \frac{1}{2\|A\|_2}$. Let $\{z^k=(x^k,y^k)\}_{k=0}^{\infty}$ be the iterates of the algorithm and let $z^*$ be the converging optimal solution, i.e., $z^k \rightarrow z^*$. Denote $(N, B_1, B_2)$ the partition of the primal variable indices using Definition \ref{def:partition}. Denote $\alpha_{L_1}$ the sharpness constant to the homogeneous linear inequality system \eqref{eq:active-cone}. Denote  $R=2\pran{\Vert z^0-z^* \Vert_{2}+\Vert z^* \Vert_{2}}+1$. Then, it holds for any
    \begin{equation}\label{eq:eq-ident-bound}
        k\geq K:=\max\left\{4,\frac{1}{s^2\alpha_{L_1}^2}\right\}\frac{256R^2}{\delta^2}+\frac{2}{s\Vert A \Vert_2} \ ,
    \end{equation}
    that 
    \begin{equation*}
        \Vert z^k-z^* \Vert_2 \leq \frac{\delta}{2} \ , \ \text{and}\ \ \ 
        x^{k}_N = 0,\; x^k_{B_1}>0,\; c_N-A_N^Ty^k>0 \ .
    \end{equation*}
\end{thm}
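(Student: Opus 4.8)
The plan is to split the argument into two parts: first establish that the iterates get close to the converging optimal solution $z^*$ at a controlled rate (a sublinear-then-local-linear argument applied to the homogeneous system that governs the tail), and then show that once $\|z^k - z^*\|_2 \le \delta/2$, the update formulas of PDHG force the sign pattern $x^k_N = 0$, $x^k_{B_1} > 0$, $c_N - A_N^T y^k > 0$ to hold. The second part is essentially a direct computation: since $z^k$ is within $\delta/2$ of $z^*$, for $i \in B_1$ we have $x^k_i \ge x^*_i - \|z^k - z^*\|_2 \ge \delta - \delta/2 > 0$; for $i \in N$ we use $c_i - A_i^T y^k \ge c_i - A_i^T y^* - \|A\|_2 \|y^k - y^*\|_2 \ge \delta\|A\|_2 - \|A\|_2 \cdot \delta/2 > 0$, which combined with the projection step $x^{k+1}_i = \mathrm{proj}_{\mathbb{R}_+}(x^k_i - s(c_i - A_i^T y^k))$ and the fact that $x^k_i$ is already driven to $0$ (one needs a half-step-index bookkeeping here, applying the estimate at index $k-1$ or tracking that the reduced cost stays positive while $x$ stays near $0$) gives $x^k_N = 0$. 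The slightly delicate point is that identification of $x_N = 0$ requires the reduced cost to be bounded away from zero at the \emph{previous} iterate too, which is why the bound $K$ in \eqref{eq:eq-ident-bound} carries the extra additive term $\frac{2}{s\|A\|_2}$: it ensures we are past the threshold by a margin large enough that the estimate holds for all sufficiently large indices simultaneously.

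The harder and more substantial part is establishing the quantitative rate $\|z^k - z^*\|_2 \le \delta/2$ for $k \ge K$. The strategy is to combine the sublinear rate of Theorem \ref{thm:thm-linear-z}(a) with a local sharpness argument. By Lemma \ref{lem:property}(a) the iterates remain in a bounded $P_s$-ball around $\mathcal{Z}^*$, so with $R = 2(\|z^0 - z^*\|_2 + \|z^*\|_2) + 1$ we have $\|z^k\|_2 \le R$ for all $k$ and also $\mathrm{dist}(z^k, \mathcal{Z}^*) \le R$. The key idea is that, by Lemma \ref{lem:property}(c), the converging optimal solution $z^*$ is the \emph{closest} point of $\mathcal{Z}^*$ in a monotone sense, and the residual $\mathrm{dist}(z^k, \mathcal{Z}^*)$ — in fact $\mathrm{dist}(z^k, z^* + \text{(tangent directions)})$ — is controlled by the normalized duality gap which is in turn controlled by $\|z^{k+1} - z^k\|_{P_s}$. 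One then invokes the sharpness of the homogeneous linear inequality system \eqref{eq:active-cone} with constant $\alpha_{L_1}$: the difference $z^k - z^*$, shifted appropriately, approximately satisfies the cone constraints of $\mathcal{K}$ (the active constraints at $z^*$ remain nearly active along the iterates because of the boundedness and the structure of the splitting), so $\alpha_{L_1} \mathrm{dist}(z^k - z^*, \mathcal{K})$ is bounded by a KKT-type residual at $z^k$, which is bounded by $\rho_r(z^k) \le \mathrm{dist}_{P_s^{-1}}(0, \mathcal{F}(z^k)) \le \|z^{k+1}-z^k\|_{P_s} \le \mathrm{dist}_{P_s}(z^0,\mathcal{Z}^*)/\sqrt{k}$ via Theorem \ref{thm:thm-linear-z}(a). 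Rearranging, $\|z^k - z^*\|_2 \lesssim \frac{1}{s\alpha_{L_1}} \cdot \frac{R}{\sqrt{k}}$ (with the $\sqrt{2/s}$ norm-conversion factors from Lemma \ref{lem:lem-norm} folded in), and setting this $\le \delta/2$ yields exactly a threshold of order $\frac{R^2}{s^2\alpha_{L_1}^2 \delta^2}$, matching \eqref{eq:eq-ident-bound}.

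I expect the main obstacle to be making rigorous the claim that $z^k - z^*$ "approximately" lies in the cone $\mathcal{K}$ and that the relevant residual is genuinely bounded by the PDHG step size $\|z^{k+1} - z^k\|_{P_s}$ — in other words, relating the geometry of $\mathcal{K}$ (defined via the partition $(N, B_1, B_2)$ at $z^*$) to the actual KKT residual of the LP at $z^k$ when $z^k$ has not yet identified the active set. One has to argue that the constraints cutting out $\mathcal{Z}^*$ near $z^*$ are precisely those encoded in \eqref{eq:active-cone}, and that feasibility violations of $\mathcal{K}$ by $z^k - z^*$ translate into KKT violations of the LP by $z^k$ up to the boundedness constant $R$. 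A secondary technical nuisance is the half-iterate indexing in the identification step (the dual update in PDHG uses $2x^{k+1} - x^k$), which must be handled carefully so that the positivity of the reduced cost at iterate $k$ is inherited from an estimate that may naturally be stated at iterate $k-1$; this is precisely what the $\frac{2}{s\|A\|_2}$ slack in $K$ absorbs.
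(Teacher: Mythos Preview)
Your two-part structure and the identification argument (part 2) are correct and match the paper's approach; in particular your handling of $x^k_{B_1}>0$, $c_N-A_N^Ty^k>0$ via the $\delta/2$-ball, and your interpretation of the extra $\tfrac{2}{s\|A\|_2}$ iterations as the time needed for the projection step to zero out $x_N$ once the reduced costs stay bounded away from zero, are exactly how the paper proceeds.

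The gap is in part 1, at the step you summarize as ``Rearranging, $\|z^k - z^*\|_2 \lesssim \tfrac{1}{s\alpha_{L_1}} \cdot \tfrac{R}{\sqrt{k}}$.'' The sharpness of the homogeneous system \eqref{eq:active-cone} only gives you
\[
\alpha_{L_1}\,\mathrm{dist}(z^k - z^*, \mathcal K)\;\le\;\text{(residual)}\;\lesssim\;\frac{R}{\sqrt k}\,,
\]
i.e.\ closeness of $z^k$ to the \emph{shifted cone} $\mathcal Z^*_L = z^* + \mathcal K$, not closeness to $z^*$ itself. The cone $\mathcal K$ is typically unbounded, so $\mathrm{dist}(z^k-z^*,\mathcal K)$ small says nothing about $\|z^k-z^*\|_2$ without further input. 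Lemma~\ref{lem:property}(c) does not help directly here either, because it only compares $\|z^k-\tilde z^*\|_{P_s}$ to $\|z^*-\tilde z^*\|_{P_s}$ for $\tilde z^*\in\mathcal Z^*$, whereas the projection $P_{\mathcal Z^*_L}(z^k)$ need not lie in $\mathcal Z^*$.

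The paper closes this gap in two moves. First (Lemma~\ref{fact:opt}) it shows the \emph{local} containment $\mathcal Z^*_L\cap B_\delta(z^*)\subset \mathcal Z^*$: near $z^*$ the extra constraints dropped in \eqref{eq:active} are automatically satisfied. Second, and this is the part missing from your sketch, Lemma~\ref{lem:global-local} runs a two-case argument: if $P_{\mathcal Z^*_L}(z^k)$ already lies in $B_{\delta/2}(z^*)$, then it is a genuine optimal point, Lemma~\ref{lem:property}(c) applies, and one gets $\|z^k-z^*\|_2\le 4\,\mathrm{dist}(z^k,\mathcal Z^*_L)\le \delta/2$. If instead $P_{\mathcal Z^*_L}(z^k)$ lies outside $B_{\delta/2}(z^*)$, the paper derives a contradiction by a planar-geometry argument showing that consecutive projections $P_{\mathcal Z^*_L}(z^N)$ and $P_{\mathcal Z^*_L}(z^{N+1})$ would have to be more than $\delta/16$ apart, violating the bound $\|z^{N+1}-z^N\|_2\le \delta/16$ obtained from the sublinear rate. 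This ``Case II is impossible'' argument is the nontrivial core of the proof and is what the constant $256$ in \eqref{eq:eq-ident-bound} is calibrated to; your proposal does not contain it, and the obstacle you flag (relating violations of $\mathcal K$ to KKT residuals) is actually the easy direction, handled cleanly by Lemma~\ref{lem:better-hoffman}(c).
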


\begin{rem}
    Theorem \ref{thm:thm-identification} shows that after 
        $K=\mathcal O\pran{\max\left\{1,\frac{1}{s^2\alpha_{L_1}^2}\right\}\frac{R^2}{\delta^2} + \frac{1}{s\Vert A \Vert_2}}$
    iterations, the solution $z^k$ is $\delta/2$-close to the convergent optimal solution $z^*$. Furthermore, the set $N$ and $B_1$ can be identified after $K$ iterations.
    
\end{rem}

The rest of this section presents the proof of Theorem \ref{thm:thm-identification}. First, we consider the linear inequality system
    \begin{equation}\label{eq:active}
    A_B^Ty\leq c_B,\; Ax=b,\; x_{N\cup B_2}\geq 0,\; \frac 1R(c^Tx-b^Ty)\leq 0 \ ,
\end{equation}
and denote $\mathcal Z_L^*:=\{(x, y)\;|\;A_B^Ty\leq c_B,\; Ax=b,\; x_{N\cup B_2}\geq 0,\; \frac 1R(c^Tx-b^Ty)\leq 0\}$ the feasible set to \eqref{eq:active}. 

The next lemma builds up the connections between $\mathcal Z^*_L$ and $\mathcal{K}$. More specifically, it states that (a) $\mathcal Z^*_L$ is a shift of the cone $\mathcal{K}$, (b) the sharpness constant of the two linear inequality systems~\eqref{eq:active-cone} and~\eqref{eq:active} are the same, (c) one can lower bound $\mathrm{dist}(0,\mathcal F(z))$ using $\mathrm{dist}(z,\mathcal Z^*_L)$ and the sharpness constant of the systems.
\begin{lem}\label{lem:better-hoffman}
(a). It holds that 
    \begin{equation*}
        \mathcal Z^*_L = z^* +\mathcal K \ .
    \end{equation*}

(b). $\alpha_{L_1}$ is the sharpness constant to system  \eqref{eq:active}, that is, for any $z=(x,y)\in \mathbb R^{m+n}$,
    \begin{equation*}
        \alpha_{L_1}\mathrm{dist}(z,\mathcal Z^*_L) \leq \left\|\begin{pmatrix}
        [A_B^Ty-c_B]^+ \\ Ax-b \\ [-x_{N\cup B_2}]^+ \\ \frac 1   R[c^Tx-b^Ty]^+ \end{pmatrix}\right\|_2   \ .
    \end{equation*}

(c). For any $z\in B_R(0)\cap \{x:x\geq 0 \}$ we have
    \begin{equation*}
        \alpha_{L_1}\mathrm{dist}(z,\mathcal Z^*_L) \leq \mathrm{dist}(0,\mathcal F(z)) \ .
    \end{equation*}
\end{lem}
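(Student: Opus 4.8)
\textbf{Proof proposal for Lemma \ref{lem:better-hoffman}.}

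The plan is to establish the three claims in order, using part (a) to reduce (b) to translation-invariance of sharpness, and then combining (b) with Proposition \ref{prop:prop-res-gap} (or a direct computation on $\mathcal F$) to obtain (c).

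For part (a), I would argue by double inclusion, exploiting that $z^*=(x^*,y^*)$ is itself feasible for \eqref{eq:active}. By construction of the partition $(N,B_1,B_2)$ in Definition \ref{def:partition}, we have $A_i^Ty^*=c_i$ for all $i\in B=B_1\cup B_2$, so $A_B^Ty^*=c_B$; also $Ax^*=b$, $x^*_{N\cup B_2}=0\ge 0$ (since $x^*_N=0$ by complementary slackness and $x^*_{B_2}=0$ by definition of $B_2$), and $c^Tx^*-b^Ty^*=0$ by strong duality. Hence $z^*\in\mathcal Z^*_L$. Now given any $z=(x,y)$, write $z=z^*+(u,v)$ with $(u,v)=(x-x^*,v)$; substituting into the defining inequalities of \eqref{eq:active} and subtracting the corresponding equalities/inequalities satisfied by $z^*$ shows that $A_B^Ty\le c_B\iff A_B^Tv\le 0$, $Ax=b\iff Au=0$, $x_{N\cup B_2}\ge 0\iff u_{N\cup B_2}\ge 0$ (again using $x^*_{N\cup B_2}=0$), and $\tfrac1R(c^Tx-b^Ty)\le 0\iff \tfrac1R(c^Tu-b^Tv)\le 0$ (using $c^Tx^*-b^Ty^*=0$). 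So $z\in\mathcal Z^*_L\iff(u,v)\in\mathcal K$, which is exactly $\mathcal Z^*_L=z^*+\mathcal K$.

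For part (b), the sharpness constant of a linear (in)equality system is invariant under translation of the variables: replacing $z$ by $z-z^*$ and $(x,y)$-feasible-set by its shift changes neither $\mathrm{dist}(z,\mathcal Z^*_L)=\mathrm{dist}(z-z^*,\mathcal K)$ nor the residual vector, because the residual of \eqref{eq:active} at $z$ equals the residual of \eqref{eq:active-cone} at $z-z^*$ (this is precisely the coordinate-wise equivalence established in part (a)). Since $\alpha_{L_1}$ is by \eqref{eq:def_alphaL} the sharpness constant of \eqref{eq:active-cone}, it is therefore also the sharpness constant of \eqref{eq:active}.

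For part (c), I would unpack $\mathrm{dist}(0,\mathcal F(z))$. Recall $\mathcal F(z)=\bigl(c-A^Ty+\partial\iota_{\mathbb R_+^n}(x),\,b-Ax\bigr)$. For $z$ with $x\ge 0$, the element of $\partial\iota_{\mathbb R_+^n}(x)$ closest to $A^Ty-c$ in the relevant coordinates yields that the primal block of any subgradient has, on coordinates where $x_i>0$, value exactly $c_i-A_i^Ty$, and on coordinates where $x_i=0$, value $c_i-A_i^Ty+(\text{nonpositive})$; choosing the nonpositive slack optimally gives primal-block residual with $i$-th entry $c_i-A_i^Ty$ if $i\notin B_2$ is forced, but on $B_2$ we can push it down to $\min\{c_i-A_i^Ty,0\}=-[A_i^Ty-c_i]^+$. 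Carrying this through, $\mathrm{dist}(0,\mathcal F(z))$ dominates $\|([A_B^Ty-c_B]^+;\,Ax-b;\,[-x_{N\cup B_2}]^+)\|_2$ up to absorbing the $B_2$-coordinates and the remaining terms; the $\tfrac1R[c^Tx-b^Ty]^+$ term can be brought in via Proposition \ref{prop:prop-res-gap} together with the norm bound $\|z\|_2\le R$, or by a direct argument that this quantity is controlled by the already-bounded residuals on $B_R(0)$. Then apply part (b) to conclude $\alpha_{L_1}\mathrm{dist}(z,\mathcal Z^*_L)\le \mathrm{dist}(0,\mathcal F(z))$.

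The main obstacle I anticipate is part (c): carefully matching the subdifferential-based quantity $\mathrm{dist}(0,\mathcal F(z))$ with the explicit residual vector of system \eqref{eq:active}, in particular correctly handling the normal-cone terms on the zero coordinates (the $N\cup B_2$ block) and justifying that including the scaled-duality-gap row $\tfrac1R[c^Tx-b^Ty]^+$ does not cost more than a constant — this is where the restriction $z\in B_R(0)\cap\{x\ge 0\}$ and the choice of $R$ in Theorem \ref{thm:thm-identification} must be used. Parts (a) and (b) should be essentially bookkeeping once the translation structure is spelled out.
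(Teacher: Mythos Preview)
Your arguments for (a) and (b) are essentially identical to the paper's: double inclusion using $A_B^Ty^*=c_B$, $x^*_{N\cup B_2}=0$, $c^Tx^*-b^Ty^*=0$, and then translation-invariance of the sharpness constant.

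For (c), the paper takes a shorter route than your primary plan. It never unpacks the subdifferential $\partial\iota_{\mathbb R_+^n}(x)$ directly. Instead, after applying (b) it simply drops the $[-x_{N\cup B_2}]^+$ block (which is zero since $x\ge 0$), enlarges the remaining residual from $[A_B^Ty-c_B]^+$ to the full $[A^Ty-c]^+$, and then uses the chain
\[
\left\|\begin{pmatrix}[A^Ty-c]^+\\ Ax-b\\ \tfrac1R[c^Tx-b^Ty]^+\end{pmatrix}\right\|_2\;\le\;\rho_r(z)\;\le\;\mathrm{dist}(0,\mathcal F(z)),
\]
where the first inequality is Proposition~\ref{prop:prop-res-gap} (this is where $\|z\|_2\le R$ enters) and the second is quoted from \cite[Proposition~1]{lu2021linear}. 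So the normalized duality gap $\rho_r$ serves as the bridge, and no case analysis on $\mathcal F(z)$ is needed.

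Your direct computation can be made to work, but as written it conflates two different index decompositions: the partition $(N,B_1,B_2)$ is defined relative to the limit $z^*$, whereas the structure of $\partial\iota_{\mathbb R_+^n}(x)$ at the current iterate depends on which coordinates of $x$ (not $x^*$) vanish. Statements like ``$i\notin B_2$ is forced'' on the set $\{i:x_i>0\}$ are not justified for a general $z$ with $x\ge 0$. If you pursue the direct route you must keep these two decompositions separate and then argue termwise; the paper's route via $\rho_r$ sidesteps this entirely.
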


\begin{proof}
    (a). For any $z\in z^*+\mathcal K$, there exists $w=(u,v)\in \mathcal K$ such that $z=z^*+w$. By the definition of set $N, B, B_1, B_2$, we have
    \begin{align*}
        \begin{split}
            & \ A_B^T(y^*+v)-c_B = A_B^Tv \leq 0 \\
            & \ A(x^*+u) = b +Au = b \\
            & \ (x^*+u)_{N\cup B_2} = u_{N\cup B_2} \geq 0 \\
            & \ \frac 1R\pran{c^T(x^*+u)-b^T(y^*+v)} = \frac 1R(c^Tu-b^Tv) \leq 0 \ ,
        \end{split}
    \end{align*}
    which implies
    \begin{equation}\label{eq:eq-subset-1}
        z^*+\mathcal K \subset \mathcal Z^*_L \ .
    \end{equation}
    On the other hand, for any $z\in \mathcal Z^*_L$, we have $z=z^*+(z-z^*)$. Notice that
    \begin{align*}
        \begin{split}
            & \ A_B^T(y-y^*) = A_B^Ty-c_B \leq 0 \\
            & \ A(x-x^*) = Ax-b = 0 \\
            & \ (x-x^*)_{N\cup B_2} = x_{N\cup B_2} \geq 0 \\
            & \ \frac{1}{R}\pran{c^T(x-x^*)-b^T(y-y^*)} = \frac{1}{R}(c^Tx-b^Ty) \leq 0 \ ,
        \end{split}
    \end{align*}
    thus $z-z^*\in \mathcal K$, which implies
    \begin{equation}\label{eq:eq-subset-2}
         \mathcal Z^*_L \subset z^*+\mathcal K \ .
    \end{equation}
    Combining \eqref{eq:eq-subset-1} and \eqref{eq:eq-subset-2}, we arrive at
    \begin{equation*}
        \mathcal Z^*_L = z^*+\mathcal K \ .
    \end{equation*}
    
    (b). By definition of $\alpha_{L_1}$, we have
    \begin{align*}
    \begin{split}
        \alpha_{L_1}\mathrm{dist}(z,\mathcal Z^*_L) & \ = \alpha_{L_1}\mathrm{dist}(z-z^*,\mathcal Z^*_L-z^*)\\
        & \ = \alpha_{L_1}\mathrm{dist}(z-z^*,\mathcal K) \\
        & \ \leq \left\|\begin{pmatrix}
        [A_B^T(y-y^*)]^+, A(x-x^*), [-(x-x^*)_{N\cup B_2}]^+ , \frac 1R[c^T(x-x^*)-b^T(y-y^*)]^+ \end{pmatrix}\right\|_2 \\
        & \ = \left\|\begin{pmatrix}
        [A_B^Ty-c_B]^+, Ax-b, [-x_{N\cup B_2}]^+ , \frac 1R[c^Tx-b^Ty]^+ \end{pmatrix}\right\|_2   \ .
    \end{split}
    \end{align*}

(c). From the definition of sharpness and Lemma \ref{lem:better-hoffman}, it holds for any $z\in B_R(0)$ with $x\geq 0$ that
    \begin{align*}
    \begin{split}
        \alpha_{L_1}\mathrm{dist}(z,\mathcal Z^*_L) & \ \leq \left\|\begin{pmatrix}
        [A_B^Ty-c_B]^+, Ax-b, \frac 1R[c^Tx-b^Ty]^+ \end{pmatrix}\right\|_2\\
        & \ \leq \left\|\begin{pmatrix}
        [A^Ty-c]^+, Ax-b, \frac 1R[c^Tx-b^Ty]^+ \end{pmatrix}\right\|_2\\
        & \ \leq \rho_r(z)  \leq  \mathrm{dist}(0,\mathcal F(z)) \ ,
    \end{split}
    \end{align*}
    where the third and fourth inequalities follow from Proposition \ref{prop:prop-res-gap} and \cite[Proposition 1]{lu2021linear} respectively.
\end{proof}


The next lemma shows when $z$ is close enough to $z^*$, the non-basic dual slacks in set $N$ and the non-degenerate basic variables in set $B_1$ are bounded away from zero.
\begin{lem}\label{lem:lem-strict-delta}
    For any $z$ such that
        $\Vert z-z^* \Vert_2 \leq \frac{\delta}{2} \ ,$
    it holds for any $i\in N$, $j\in B_1$ that 
    \begin{equation*}
        c_i-A_i^Ty \geq \frac{\delta}{2}\Vert A \Vert_2,\; x_j\geq \frac{\delta}{2} \ .
    \end{equation*}
\end{lem}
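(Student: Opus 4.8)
The plan is to unwind the definition of the non-degeneracy metric $\delta$ and then apply a one-step perturbation estimate; no machinery beyond the triangle inequality and Cauchy--Schwarz is needed. By Definition~\ref{def:delta}, we have $\delta \le x_j^*$ for every $j\in B_1$ and $\delta \le \frac{c_i-A_i^Ty^*}{\|A\|_2}$ for every $i\in N$, i.e.\ equivalently $x_j^*\ge \delta$ and $c_i-A_i^Ty^*\ge \delta\|A\|_2$. Hence it suffices to bound how far $x_j$ and $c_i-A_i^Ty$ can move from their values at $z^*$ once $z$ lies in the ball $\|z-z^*\|_2\le \delta/2$.

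For the primal coordinates I would simply observe that $|x_j-x_j^*|\le \|x-x^*\|_2\le \|z-z^*\|_2\le \delta/2$, so $x_j\ge x_j^*-\delta/2\ge \delta-\delta/2=\delta/2$. For the reduced costs the deviation is $|(c_i-A_i^Ty)-(c_i-A_i^Ty^*)| = |A_i^T(y^*-y)|\le \|A_i\|_2\,\|y-y^*\|_2$; using the elementary fact that the Euclidean norm of a column of $A$ is bounded by its operator norm, $\|A_i\|_2=\|Ae_i\|_2\le \|A\|_2$, together with $\|y-y^*\|_2\le \|z-z^*\|_2\le \delta/2$, the deviation is at most $\tfrac{\delta}{2}\|A\|_2$. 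Therefore $c_i-A_i^Ty\ge \delta\|A\|_2-\tfrac{\delta}{2}\|A\|_2=\tfrac{\delta}{2}\|A\|_2$, as claimed.

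There is no genuine obstacle: the lemma is a direct Lipschitz/perturbation bound, and the only structural point worth noting is that the $\|A\|_2$-normalization appearing in the $N$-term of $\delta$ is exactly what puts the primal slackness $x_j^*$ and the dual reduced cost $c_i-A_i^Ty^*$ on a comparable scale, so that both are absorbed by the single radius $\delta/2$ and yield the matching lower bounds $\tfrac{\delta}{2}$ and $\tfrac{\delta}{2}\|A\|_2$.
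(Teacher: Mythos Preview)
Your proof is correct and essentially identical to the paper's: both split $\|z-z^*\|_2\le\delta/2$ into $\|x-x^*\|_2\le\delta/2$ and $\|y-y^*\|_2\le\delta/2$, then use the coordinate bound and a Cauchy--Schwarz/operator-norm estimate together with the definition of $\delta$. The only cosmetic difference is that the paper bounds the full vector $\|A^T(y-y^*)\|_2\le\|A\|_2\|y-y^*\|_2$ before extracting the $i$-th component, whereas you bound $|A_i^T(y-y^*)|$ directly via $\|A_i\|_2\le\|A\|_2$.
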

\begin{proof}
    It follows from $\Vert z-z^* \Vert_2 \leq \frac{\delta}{2}$ that
    \begin{align*}
        \begin{split}
            \ \Vert x-x^* \Vert_2 \leq \frac{\delta}{2} \ , \Vert y-y^* \Vert_2 \leq \frac{\delta}{2} \ .
        \end{split}
    \end{align*}
    As a result, it holds for any $j\in B_1$ and $i\in N$ that
    \begin{align*}
        \begin{split}
            & \ |x_j-x_j^*|\leq \Vert x-x^* \Vert_2 \leq \frac{\delta}{2} \ , \\
            & \ |c_i-A_i^Ty-c_i-A_i^Ty^*| \leq \Vert (c-A^Ty)-(c-A^Ty^*) \Vert_2 \leq \Vert A \Vert_2 \Vert y-y^* \Vert_2 \leq \Vert A \Vert_2\frac{\delta}{2} \ ,
        \end{split}
    \end{align*}
    and thus by definition of $\delta$, we have
    \begin{align*}
        \begin{split}
            & \ x_j\geq  -\frac{\delta}{2}+x_j^* \geq \frac{\delta}{2} \ , \\
            & \ c_i-A_i^Ty \geq -\Vert A \Vert_2\frac{\delta}{2}+(c_i-A_i^Ty^*) \geq \Vert A \Vert_2(-\frac{\delta}{2}+\delta) = \Vert A \Vert_2\frac{\delta}{2} \ .
        \end{split}
    \end{align*}
\end{proof}
The next lemma connects $\mathcal Z^*_L$ and the optimal solution set $\mZ^*$.
\begin{lem}\label{fact:opt}
    $\mathcal Z^*_L \cap B_{\delta}(z^*) \subset \mathcal Z^*$.
\end{lem}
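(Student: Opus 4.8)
**Proof plan for Lemma \ref{fact:opt}: $\mathcal Z^*_L \cap B_{\delta}(z^*) \subset \mathcal Z^*$.**

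The plan is to take an arbitrary $z = (x,y) \in \mathcal Z^*_L \cap B_\delta(z^*)$ and verify that it satisfies the KKT conditions of \eqref{eq:minmax}, hence lies in $\mathcal Z^*$. Membership in $\mathcal Z^*_L$ already gives us primal feasibility $Ax = b$, the partial dual feasibility $A_B^T y \le c_B$, the sign constraint $x_{N\cup B_2} \ge 0$, and the reversed-gap inequality $\tfrac1R(c^Tx - b^Ty) \le 0$. What is missing for full KKT is: (i) full dual feasibility, i.e. $A_N^T y \le c_N$ as well; (ii) full nonnegativity $x_B \ge 0$ (we only know $x_{B_2}\ge 0$, not $x_{B_1}\ge 0$); and (iii) complementary slackness, equivalently $c^Tx - b^Ty \ge 0$ so that combined with the reversed inequality we get $c^Tx = b^Ty$. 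The key is that the ball $B_\delta(z^*)$ is small enough (relative to the non-degeneracy metric $\delta$) that the strict inequalities holding at $z^*$ on the non-degenerate coordinates survive.

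First I would handle the non-degenerate coordinates using a Lipschitz/perturbation argument essentially identical to Lemma \ref{lem:lem-strict-delta} (with radius $\delta$ instead of $\delta/2$). For $i \in N$: since $c_i - A_i^T y^* \ge \delta \|A\|_2$ by definition of $\delta$, and $|(c_i - A_i^Ty) - (c_i - A_i^T y^*)| \le \|A\|_2\|y - y^*\|_2 < \delta \|A\|_2$ when $\|z - z^*\|_2 < \delta$ (the boundary case $=\delta$ needs a touch of care, but $x^*_N = 0$ forces $x_N \ge 0$ via $z \in \mathcal Z^*_L$ to still be consistent; more cleanly, on the closed ball one gets $c_i - A_i^Ty \ge 0$, which is all we need for dual feasibility), we conclude $c_N - A_N^T y \ge 0$, giving (i). Combined with $A_B^T y \le c_B$ this yields full dual feasibility $A^T y \le c$. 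For $j \in B_1$: since $x_j^* \ge \delta$ and $|x_j - x_j^*| \le \|x - x^*\|_2 < \delta$, we get $x_j > 0$, and with $x_{N \cup B_2} \ge 0$ this gives $x \ge 0$, i.e. (ii). Thus $z$ is primal feasible for \eqref{eq:standardform} and $y$ is dual feasible.

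Finally, for (iii): primal feasibility of $x$ and dual feasibility of $y$ imply weak duality $c^Tx \ge b^Ty$, i.e. $c^Tx - b^Ty \ge 0$; together with the reversed inequality $\tfrac1R(c^Tx - b^Ty)\le 0$ from $z\in\mathcal Z^*_L$, we get $c^Tx = b^Ty$, so $(x,y)$ is an optimal primal-dual pair by strong duality, hence $z \in \mathcal Z^*$. I expect the only mild obstacle to be bookkeeping at the boundary of the ball (strict vs.\ non-strict inequalities when $\|z - z^*\|_2$ equals exactly $\delta$): for the dual-feasibility direction non-strictness is harmless, and for the $x_{B_1} > 0$ direction one can note that if $\|z-z^*\|_2 = \delta$ forced some $x_j = 0$ with $j\in B_1$ then $\|x-x^*\|_2 \ge \delta$ forces $\|y-y^*\|_2 = 0$, and then re-examine; but since $\mathcal Z^*$ is closed, even handling the open ball $B_\delta$ and taking closures suffices, so this is not a real difficulty. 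The substantive content is entirely the perturbation estimate powered by the definition of $\delta$.
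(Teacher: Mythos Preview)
Your proposal is correct and follows essentially the same route as the paper's proof: use the perturbation estimate driven by the definition of $\delta$ (as in Lemma~\ref{lem:lem-strict-delta}, but with radius $\delta$) to recover the missing constraints $c_N - A_N^T y \ge 0$ and $x_{B_1}\ge 0$, then observe that together with the defining inequalities of $\mathcal Z_L^*$ one obtains exactly the KKT system of the original LP. The only cosmetic difference is that the paper simply points to the KKT system $\{Ax=b,\;A^Ty\le c,\;x\ge 0,\;\tfrac1R(c^Tx-b^Ty)\le 0\}$ as the characterization of $\mathcal Z^*$, whereas you unpack this by explicitly invoking weak duality to turn $\tfrac1R(c^Tx-b^Ty)\le 0$ into equality; and your boundary-of-the-ball discussion is unnecessary since non-strict inequalities $c_N-A_N^Ty\ge 0$ and $x_{B_1}\ge 0$ are all that the KKT system requires.
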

\begin{proof}
    Following the same proof of Lemma \ref{lem:lem-strict-delta}, we know that for any $z\in B_{\delta}(z^*)$, 
    \begin{equation}\label{eq:local-global-set-1}
        c_N-A_N^Ty\geq 0,\; x_{B_1}\geq 0 \ .
    \end{equation}
    Recall the definition of $\mathcal Z_L^*$ as the solution set to the system
    \begin{equation}\label{eq:local-global-set-2}
        A_B^Ty-c_B\leq 0,\; Ax=b,\; x_{N\cup B_2}\geq 0,\; \frac 1R(c^Tx-b^Ty)\leq 0 \ . 
    \end{equation}
    Note that combining \eqref{eq:local-global-set-1} and \eqref{eq:local-global-set-2} gives exactly the KKT system of original LP \eqref{eq:minmax} and thus
    \begin{equation*}
        \mathcal Z^*_L \cap B_{\delta}(z^*) \subset \mathcal Z^* \ .
    \end{equation*}
\end{proof}


The following lemma ensures that after a certain number of iterations, the iterates $\{z^k\}$ stay in the ball centered at $z^*$ with radius $\frac{\delta}{2}$. 

\begin{lem}\label{lem:global-local}
    Under the same condition of Theorem \ref{thm:thm-identification}, for any iteration $k$ such that
    \begin{equation*}
        k\geq K_1:=\max\left\{4,\frac{1}{s^2\alpha_{L_1}^2}\right\}\frac{256\mathrm{dist}^2(z^0,\mathcal Z^*)}{\delta^2} \ ,
    \end{equation*}
    we have
    \begin{equation*}
        \Vert z^k-z^* \Vert_2 \leq \frac{\delta}{2} \ .
    \end{equation*}
\end{lem}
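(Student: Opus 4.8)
The plan is to show that for $k\ge K_1$ the iterate is trapped in $B_{\delta/2}(z^*)$, via three successive transfers: first the PDHG residual is small (sublinear rate); second, the local sharpness constant $\alpha_{L_1}$ turns this into smallness of $\mathrm{dist}(z^k,\mathcal Z^*_L)$; third, since $\mathcal Z^*_L$ agrees with $\mathcal Z^*$ near $z^*$ (Lemma~\ref{fact:opt}) and $z^*$ is the ``closest'' optimal solution to every iterate (Lemma~\ref{lem:property}(c)), closeness to the set $\mathcal Z^*_L$ upgrades to closeness to the single point $z^*$. As a preliminary I would note that the whole trajectory is bounded: by Lemma~\ref{lem:property}(a) and Lemma~\ref{lem:lem-norm}, $\|z^k-z^*\|_2\le\sqrt{2s}\,\|z^k-z^*\|_{P_s}\le\sqrt{2s}\,\|z^0-z^*\|_{P_s}\le 2\|z^0-z^*\|_2$, so $\|z^k\|_2\le 2\|z^0-z^*\|_2+\|z^*\|_2\le R$ for all $k$, and $x^k\ge 0$ by the projection step in Algorithm~\ref{alg:pdhg}; hence Lemma~\ref{lem:better-hoffman}(c) applies at every iterate.

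Step 1 (residual to distance to $\mathcal Z^*_L$). Theorem~\ref{thm:thm-linear-z}(a) gives $\mathrm{dist}_{P_s^{-1}}(0,\mathcal F(z^k))\le\mathrm{dist}_{P_s}(z^0,\mathcal Z^*)/\sqrt k$. Passing between $\|\cdot\|_2$, $\|\cdot\|_{P_s}$ and $\|\cdot\|_{P_s^{-1}}$ with Lemma~\ref{lem:lem-norm} and then feeding the result into Lemma~\ref{lem:better-hoffman}(c),
\[
\mathrm{dist}(z^k,\mathcal Z^*_L)\ \le\ \tfrac{1}{\alpha_{L_1}}\,\mathrm{dist}(0,\mathcal F(z^k))\ \le\ \tfrac{2}{s\alpha_{L_1}}\cdot\tfrac{\mathrm{dist}(z^0,\mathcal Z^*)}{\sqrt k}\ .
\]
Substituting $k\ge K_1$ and distinguishing the two branches of the $\max$ in the definition of $K_1$ yields $\mathrm{dist}(z^k,\mathcal Z^*_L)\le\delta/8$ for every $k\ge K_1$.

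Step 2 (the bootstrap: distance to $\mathcal Z^*_L$ to distance to $z^*$). Fix $k\ge K_1$ and suppose in addition that $\|z^k-z^*\|_2\le\delta$. Since $\mathcal Z^*_L$ is convex and contains $z^*$, the Euclidean projection $\widehat z:=P_{\mathcal Z^*_L}(z^k)$ satisfies $\|\widehat z-z^*\|_2\le\|z^k-z^*\|_2\le\delta$, so $\widehat z\in\mathcal Z^*_L\cap B_\delta(z^*)\subseteq\mathcal Z^*$ by Lemma~\ref{fact:opt}; hence $\mathrm{dist}(z^k,\mathcal Z^*)\le\|z^k-\widehat z\|_2=\mathrm{dist}(z^k,\mathcal Z^*_L)\le\delta/8$. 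Applying Lemma~\ref{lem:property}(c) to $\tilde z^*:=\arg\min_{u\in\mathcal Z^*}\|u-z^k\|_{P_s}$ gives $\|z^*-\tilde z^*\|_{P_s}\le\|z^k-\tilde z^*\|_{P_s}$, so by the triangle inequality $\|z^k-z^*\|_{P_s}\le 2\,\mathrm{dist}_{P_s}(z^k,\mathcal Z^*)$; one more conversion via Lemma~\ref{lem:lem-norm} yields $\|z^k-z^*\|_2\le 4\,\mathrm{dist}(z^k,\mathcal Z^*)\le\delta/2$.

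Step 3 (gluing, and the main obstacle). With the bootstrap available, an induction on $k\ge K_1$ finishes the argument: if $\|z^k-z^*\|_2\le\delta/2$, then Lemma~\ref{lem:property}(a) and Lemma~\ref{lem:lem-norm} give $\|z^{k+1}-z^*\|_2\le 2\|z^k-z^*\|_2\le\delta$, and Step~2 upgrades this to $\|z^{k+1}-z^*\|_2\le\delta/2$. The delicate point I expect to be the crux is the base case $\|z^{K_1}-z^*\|_2\le\delta$: the preliminary bound only controls $\|z^{K_1}-z^*\|_2$ by $2\|z^0-z^*\|_2$, which need not be below $\delta$, so the bootstrap cannot be invoked at $K_1$ as stated, and one cannot bound $\|z^{K_1}-z^*\|_2$ by summing the consecutive-difference bounds either (the $\sum 1/\sqrt j$ tail diverges). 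I would close this with a first-exit argument over $\{k\ge K_1\}$ together with a sharpened version of Step~2 — using the cone representation $\mathcal Z^*_L=z^*+\mathcal K$ (Lemma~\ref{lem:better-hoffman}(a)) and Lemma~\ref{lem:property}(c) applied to the point of $\mathcal Z^*$ lying on the segment from $z^*$ toward $P_{\mathcal Z^*_L}(z^k)$ at distance $\delta$ — so that any violation $\|z^k-z^*\|_2>\delta/2$ forces $\|z^k-z^*\|_2$ above a fixed multiple of $\delta$; at the first bad index this contradicts non-expansiveness from the preceding good iterate, and at $k=K_1$ it must be reconciled with $\mathrm{dist}(z^{K_1},\mathcal Z^*_L)\le\delta/8$. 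Carrying the several $\sqrt{2s}$-type constants through all these conversions so that the final radius comes out exactly $\delta/2$ (matching the $256$ in $K_1$) is the book-keeping-heavy part.
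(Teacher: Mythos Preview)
Your Steps~1 and~2 are precisely the paper's opening moves: the paper also derives $\mathrm{dist}(z^k,\mathcal Z^*_L)\le\delta/8$ for $k\ge K_1$, and your bootstrap is exactly its Case~I (the paper phrases the hypothesis as $\|P_{\mathcal Z^*_L}(z^k)-z^*\|_2\le\delta/2$, but this is equivalent to yours via non-expansiveness of projection and Lemma~\ref{fact:opt}).

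The genuine gap is your resolution of the base case. Your ``first bad index'' plan cannot close: if $K_1$ itself is bad there is no preceding good iterate in $\{k\ge K_1\}$, and the fallback you offer --- ``reconcile with $\mathrm{dist}(z^{K_1},\mathcal Z^*_L)\le\delta/8$'' --- yields no contradiction on its own (one can have $P_{\mathcal Z^*_L}(z^{K_1})$ arbitrarily far from $z^*$ inside the cone $z^*+\mathcal K$). Your ``sharpened Step~2'' also does not give the claimed dichotomy: the segment point $\tilde z\in\mathcal Z^*$ together with Lemma~\ref{lem:property}(c) only yields $\|z^k-\tilde z\|_2\ge\tfrac12\|\tilde z-z^*\|_2$, which does not force $\|z^k-z^*\|_2$ above $\delta$.

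The paper closes this by running the exit argument in the \emph{other} direction. Using $z^k\to z^*$ (Lemma~\ref{lem:property}(b)), if any $k\ge K_1$ is bad one takes the \emph{last} such index $N$, so that $\|z^N-z^*\|_2>\delta/2$ and $\|z^{N+1}-z^*\|_2\le\delta/2$. The second ingredient you noted but set aside --- the consecutive-difference bound $\|z^{k+1}-z^k\|_2\le\delta/16$, coming from the ``$4$'' branch of $K_1$ via Theorem~\ref{thm:thm-linear-z}(a) --- is exactly what bridges $N$ and $N+1$. The paper's Case~II carries this out via a somewhat elaborate two-dimensional law-of-cosines computation showing $\|P_{\mathcal Z^*_L}(z^N)-P_{\mathcal Z^*_L}(z^{N+1})\|_2>\delta/16$. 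In fact your own Step~2 furnishes a shorter route: from $\|z^{N+1}-z^*\|_2\le\delta/2$ and $\|z^{N+1}-z^N\|_2\le\delta/16$ one gets $\|z^N-z^*\|_2\le 9\delta/16<\delta$, and then your bootstrap immediately gives $\|z^N-z^*\|_2\le\delta/2$, a contradiction. So the two missing pieces are (i) use the last bad index, not the first, and (ii) invoke the consecutive-difference bound rather than the coarse $\|z^{k+1}-z^*\|_2\le 2\|z^k-z^*\|_2$.
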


\begin{proof}
    We first show that for any $k\ge K_1$, it holds that $\|z^{k+1}-z^k\|_{2}\leq \frac{\delta}{16}$ and $\mathrm{dist}(z^k,\mathcal Z^*_L)\leq \frac{\delta}{8}$, then we show $\| z^k-z^*\|_{2}\leq \frac{\delta}{2}$. The proof  heavily relies on the non-expansive nature of PDHG iterates.

    Notice that
    \begin{align*}
        \begin{split}
            & \ \|z^{k+1}-z^k\|_{2}  \leq \sqrt{2s}\|z^{k+1}-z^k\|_{P_s} 
            \leq \frac{\sqrt{2s}\mathrm{dist}_{P_s}(z^0,\mathcal Z^*)}{\sqrt k}\leq \frac{2\mathrm{dist}(z^0,\mathcal Z^*)}{\sqrt k}  \ , \\
        \end{split}
    \end{align*}
    where the first and third inequalities use Lemma \ref{lem:lem-norm} while the second one follows from part (a) in Theorem \ref{thm:thm-linear-z}. Furthermore, it holds that
    \begin{align*}
        \begin{split}
            & \ \mathrm{dist}(z^k,\mathcal Z^*_L)  \leq \frac{1}{\alpha_{L_1}}\mathrm{dist}(0,\mathcal F(z^k)) \leq \frac{1}{\alpha_{L_1}}\sqrt{\frac 2s}\mathrm{dist}_{P_s^{-1}}(0,\mathcal F(z^k))\leq \sqrt{\frac 2s}\frac{\mathrm{dist}_{P_s}(z^0,\mathcal Z^*)}{\alpha_{L_1}\sqrt k}\leq \frac{2\mathrm{dist}(z^0,\mathcal Z^*)}{s\alpha_{L_1}\sqrt k} \ ,
        \end{split}
    \end{align*}
    where the first inequality follows from part (c) in Lemma \ref{lem:better-hoffman}, the second and fourth inequalities utilize Lemma \ref{lem:lem-norm}, and the third inequality is from part (b) of Theorem \ref{thm:thm-linear-z}.     Therefore, we have $\|z^{k+1}-z^k\|_{2}\leq \frac{\delta}{16}$ and $\mathrm{dist}(z^k,\mathcal Z^*_L)\leq \frac{\delta}{8}$ whenever iteration number satisfies $k\ge K_1$. 
    
    In the rest of the proof, we consider two cases, separating by whether $\|P_{\mathcal Z^*_L}(z^k)-z^*\|_2\leq \frac{\delta}{2}$ or not. We will show that the first case implies $\| z^k-z^*\|_{2}\leq \frac{\delta}{2}$, and it is impossible for the second case to happen, which then finishes the proof of the lemma.
    

{\bf Case I: $\|P_{\mathcal Z^*_L}(z^k)-z^*\|_2\leq \frac{\delta}{2}$.} In this case, we have $P_{\mathcal Z^*_L}(z^k)\in \mathcal Z^*_L\cap B_{\delta/2}(z^*)\subset\mathcal Z^*$ from Lemma \ref{fact:opt} and therefore it follows from part (c) in Lemma \ref{lem:property} that
\begin{equation*}
    \| z^k-P_{\mathcal Z^*_L}(z^k)\|_{P_s} \geq \| z^*-P_{\mathcal Z^*_L}(z^k)\|_{P_s} \ .
\end{equation*}

Thus, it holds that
\begin{equation*}
    \| z^k-z^*\|_{P_s}\leq \| z^k-P_{\mathcal Z^*_L}(z^k)\|_{P_s}+\| P_{\mathcal Z^*_L}(z^k)-z^*\|_{P_s} \leq 2\| z^k-P_{\mathcal Z^*_L}(z^k)\|_{P_s}\leq \frac{2\sqrt 2}{\sqrt s}\| z^k-P_{\mathcal Z^*_L}(z^k)\|_{2} \ .
\end{equation*}

whereby
\begin{equation*}
    \| z^k-z^*\|_{2}\leq \sqrt{2s}\| z^k-z^*\|_{P_s}\leq 4\| z^k-P_{\mathcal Z^*_L}(z^k)\|_{2}= 4\mathrm{dist}(z^k,\mathcal Z^*_L) \leq \frac{\delta}{2} \ .
\end{equation*}

{\bf Case II: $\|P_{\mathcal Z^*_L}(z^k)-z^*\|_2>\frac{\delta}{2}$.} We aim to show this case is impossible under conditions $\|z^{k+1}-z^k\|_{2}\leq \frac{\delta}{16}$ and $\mathrm{dist}(z^k,\mathcal Z^*_L)\leq \frac{\delta}{8}$. 

First, since the iterates converge to an optimal solution (i.e. $z^k \rightarrow z^*$), we know that $\|z^k-z^*\|_2\geq \|P_{\mathcal Z^*_L}(z^k)-z^*\|_2>\frac{\delta}{2}$ and $\| z^k-z^*\|_2$  decreases to zero. Then there exists the smallest $N\geq k$ such that 
\begin{equation*}
    \|z^N-z^*\|_2>\frac{\delta}{2},\;\|z^{N+1}-z^*\|_2\leq \frac{\delta}{2} \ ,
\end{equation*}
and hence
\begin{equation}\label{eq:mix1}
    \|P_{\mathcal Z^*_L}(z^N)-z^*\|_2>\frac{\delta}{2},\;\|P_{\mathcal Z^*_L}(z^{N+1})-z^*\|_2\leq \frac{\delta}{2} \ ,
\end{equation}
where the first one is due to the conclusion in Case I and the second one is the consequence of non-expansiveness of projection. It then follows from  Lemma \ref{fact:opt} that $P_{\mathcal Z^*_L}(z^{N+1})\in\mathcal Z^*_L\cap B_{\delta/2}(z^*)\subset\mathcal Z^*$.

If $P_{\mathcal Z^*_L}(z^{N+1})=z^*$, then from \eqref{eq:mix1} we have 
\begin{equation}\label{eq:coincide}
    \|P_{\mathcal Z^*_L}(z^N)-P_{\mathcal Z^*_L}(z^{N+1})\|_2=\|P_{\mathcal Z^*_L}(z^N)-z^*\|_2>\frac{\delta}{2}
\end{equation}
Otherwise, denote $\tilde z=z^*+\frac{\delta}{2\|P_{\mathcal Z^*_L}(z^{N+1})-z^*\|_2}(P_{\mathcal Z^*_L}(z^{N+1})-z^*)\in \mathcal Z^*_L$ and then $\|\tilde z-z^*\|_2=\frac{\delta}{2}$, which implies $\tilde z\in\mathcal Z^*_L\cap B_{\delta/2}(z^*)\subset\mathcal Z^*$. By triangle inequality, we have
\begin{align}\label{eq:mix2}
    \begin{split}
        \| P_{\mathcal Z^*_L}(z^{N})-\tilde z \|_2\geq\|z^N-\tilde z\|_2-\|P_{\mathcal Z^*_L}(z^{N})-z^N\|_2 \geq \frac{\delta}{4}-\mathrm{dist}(z^N,\mathcal Z^*_L)\geq \frac{\delta}{8}  \ ,
    \end{split}
\end{align}
where the second inequality follows from 
\begin{equation*}
    \|z^N-\tilde z\|_2\geq \sqrt{\frac{s}{2}}\|z^N-\tilde z\|_{P_s}\geq \sqrt{\frac{s}{2}}\|z^*-\tilde z\|_{P_s}\geq \sqrt{\frac{s}{2}}\sqrt{\frac{1}{2s}}\|z^*-\tilde z\|_{2}=\frac 12\times\frac{\delta}{2}=\frac{\delta}{4} \ ,
\end{equation*}
and $\|P_{\mathcal Z^*_L}(z^{N})-z^N\|_2=\mathrm{dist}(z^N,\mathcal Z^*_L) \leq \frac{\delta}{8}$ as $N\geq k \ge K_1$.

Consider the two-dimensional plane that $z^*$, $\tilde z$ and $P_{\mathcal Z^*_L}(z^{N})$ lay (see Figure \ref{fig:triangle}). We know by construction of $\tilde z$ that $P_{\mathcal Z^*_L}(z^{N+1})$ lies on the segment between  $z^*$ and $\tilde z$. Let $\|P_{\mathcal Z^*_L}(z^N)-z^*\|_2=t\frac{\delta}{2}$ and $\| P_{\mathcal Z^*_L}(z^{N})-\tilde z \|_2=w\frac{\delta}{8}$, then we have $t\geq 1$ and $w\geq 1$ by \eqref{eq:mix1} and \eqref{eq:mix2}. Define $\theta$ as the angle between vector $P_{\mathcal Z^*_L}(z^{N})-z^*$ and $\tilde z-z^*$, and $\phi$ as the angle between vector $P_{\mathcal Z^*_L}(z^{N})-\tilde z$ and $z^*-\tilde z$. There are three cases:

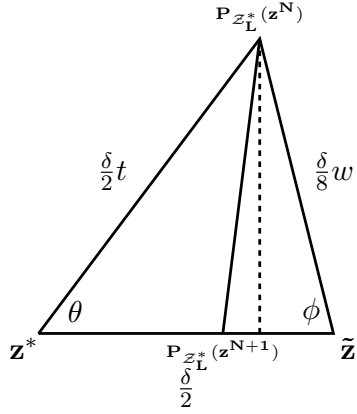
\begin{figure}
\centering
  \begin{tikzpicture}[scale=0.7,every node/.style={scale=1.5}]
        \begin{axis}[hide axis,xlabel={},ylabel={}, ytick={}, xtick={}, yticklabels={}, xticklabels={},legend pos=outer north east, legend cell align=left, xmin=-0.45, xmax=2.45, ymin=-0.5, ymax=2.4, y=2.8cm,  x=2.8cm,]
        \addplot[forget plot,color=black,line width=1.5pt] coordinates {(0,0) (2,0) (1.5,2) (0,0)};
        \addplot[forget plot,color=black,line width=1.5pt] coordinates {(1.25,0) (1.5,2)};
        \addplot[forget plot,color=black,line width=1.5pt, dashed] coordinates {(1.5,0) (1.5,2)};
        \node at (axis cs: 0.25,0.15) {$\theta$};
        \node at (axis cs: 1.85,0.15) {$\phi$};
        \node at (axis cs: 0.5,1.1) {$\frac{\delta}{2} t$};
        \node at (axis cs: 2,1.1) {$\frac{\delta}{8} w$};
        \node at (axis cs: 1,-0.37) {$\frac{\delta}{2}$};
        \node at (axis cs: 1.5,2.15) {\tiny $\mathbf{P_{\mathcal Z^*_L}(z^N)}$};
        \node at (axis cs: 1.25,-0.15) {\tiny $\mathbf{\small P_{\mathcal Z^*_L}(z^{N+1})}$};
        \node at (axis cs: -0.1,-0.1) {$\mathbf{z^*}$};
        \node at (axis cs: 2.1,-0.1) {$\mathbf{\tilde z}$};
        \end{axis}
        \end{tikzpicture}
\caption{Illustration of the proof of Case II in Lemma \ref{lem:global-local}}
\label{fig:triangle}
\end{figure}


(a) If $\mathrm{cos}(\theta)\leq 0$, then we have $\|P_{\mathcal Z^*_L}(z^{N})-P_{\mathcal Z^*_L}(z^{N+1})\|_2\geq \|P_{\mathcal Z^*_L}(z^{N})-z^*\| \geq \frac{\delta}{2}t\geq \frac{\delta}{2}$.

(b) If $\mathrm{cos}(\phi)\leq 0$, then we have $\|P_{\mathcal Z^*_L}(z^{N})-P_{\mathcal Z^*_L}(z^{N+1})\|_2\geq \|P_{\mathcal Z^*_L}(z^{N})-\tilde z\|\geq \frac{\delta}{8}w \geq \frac{\delta}{8}$.

(c) It $\mathrm{cos}(\theta)> 0$ and $\mathrm{cos}(\phi)> 0$, then it follows from law of cosine that
\begin{align*}
    \begin{split}
        & \ \frac{1}{16} w^2 = 1+t^2-2t\mathrm{cos}(\theta) \leq 1+t^2 \ ,
    \end{split}
\end{align*}
thus 
\begin{equation}\label{eq:eq-lb-t}
    t^2\geq \frac{1}{16}w^2-1\ .
\end{equation}
Again by law of cosine and note that $t\geq 1$ and \eqref{eq:eq-lb-t}, we have
\begin{align*}
    \begin{split}
        & \ 1+\frac{1}{16}w^2 -\frac{1}{2}w\mathrm{cos}(\phi)=t^2 \geq \max\left\{1,\frac{1}{16}w^2-1\right\} \ , \\ 
    \end{split}
\end{align*}
which implies
\begin{align*}
    \begin{split}
        & \ \mathrm{cos}(\phi)\leq \min\left\{\frac{w}{8}, \frac{4}{w} \right\} \leq \frac{\sqrt 2}{2} \ , \\ 
    \end{split}
\end{align*}
whereby
\begin{equation*}
    \mathrm{sin}(\phi)\geq \frac{\sqrt 2}{2} \ .
\end{equation*}

By two-dimensional geometry, we have
\begin{equation*}
    \|P_{\mathcal Z^*_L}(z^{N})-P_{\mathcal Z^*_L}(z^{N+1})\|_2 \geq \frac{\delta}{8}w \mathrm{sin}(\phi) \geq  \frac{\sqrt {2}}{16} \delta \ . 
\end{equation*}
Combining (a)-(c) and \eqref{eq:coincide}, we have
\begin{equation*}
    \|z^{N}-z^{N+1}\|_2 \geq \|P_{\mathcal Z^*_L}(z^{N})-P_{\mathcal Z^*_L}(z^{N+1})\|_2 > 
    \min\left\{ \frac{\delta}{2}, \frac{\delta}{8}, \frac{\sqrt 2\delta}{16}\right\}>\frac{\delta}{16} \ ,
\end{equation*}
However, since $N\geq k\geq K_1$, we know $\|z^{N+1}-z^N\|_{2}\leq \frac{\delta}{16}$. This leads to the contradiction.

To sum up, for any iteration $k$, when $\|z^{k+1}-z^k\|_{2}\leq \frac{\delta}{16}$ and $\mathrm{dist}(z^k,\mathcal Z^*_L)\leq \frac{\delta}{8}$, it must have $\|P_{\mathcal Z^*_L}(z^k)-z^*\|_2\leq \frac{\delta}{2}$ (i.e., Case II cannot happen) and thus $\| z^k-z^*\|_{2}\leq \frac{\delta}{2}$ following the argument in Case I, which completes the proof of Lemma \ref{lem:global-local}.
\end{proof}

By combining Lemma \ref{lem:lem-strict-delta} and \ref{lem:global-local}, we are ready to prove Theorem \ref{thm:thm-identification}.
\begin{proof}[Proof of Theorem \ref{thm:thm-identification}]
    Consider $k\ge K\ge K_1$. It follows from Lemma \ref{lem:global-local} that $\Vert z^k-z^* \Vert_2 \leq \frac{\delta}{2} $. Thus, we have from Lemma \ref{lem:lem-strict-delta} that $x_{B_1}^k > 0, c_N-A_N^T y^k >0$. We just need to show that $x^k_N=0$.
    Notice it holds from Lemma \ref{lem:lem-strict-delta} that for any $i\in N$ and $k\geq K_1$ 
    \begin{equation*}
        c_i-A_i^Ty^k \geq \Vert A \Vert_2\frac{\delta}{2} \ .
    \end{equation*}
    Consider the update formula of PDHG
    \begin{equation*}
        x^{k+1}_N=\mathrm{proj}_{\mathbb R_+^{|N|}}(x^{k}_N-s(c_N-A_N^Ty^k)) \ .
    \end{equation*}
    Thus, it follows from Lemma \ref{lem:lem-norm} that
    \begin{align*}
        \begin{split}
            \max_{i\in N}x^{K_1}_i \leq \Vert z_N^{K_1} \Vert_2 \leq \sqrt{2s} \Vert z_N^{K_1} \Vert_{P_s} = \sqrt{2s} \Vert z_N^{K_1}-z_N^* \Vert_{P_s} \leq \sqrt{2s} \Vert z^{K_1}-z^* \Vert_{P_s} \leq \delta  \ .
        \end{split}
    \end{align*}
    
    Then for any $q>\frac{2{\delta}}{s\delta\Vert A \Vert_2}=\frac{2}{s\Vert A \Vert_2}$ and $i\in N$, we have $\sum_{k=0}^{q-1} s(c-A^T y^{K_1+k})_i \ge \frac{q\delta s\|A\|_2}{2}$, thus it holds that
    \begin{equation*}
        x^{{K_1}+q}_N=0 \ ,
    \end{equation*}
    that is, after at most $K:=K_1+\frac{2}{s\Vert A \Vert_2}$ steps, the coordinates in set $N$ are fixed to zero. This finishes the proof of Theorem \ref{thm:thm-identification}.
\end{proof}

\section{Stage II: local convergence}\label{sec:stage-2}
In the previous section, we show that after a certain number of iterations, the iterates of PDHG stay close to an optimal solution, and the iterates identify the non-degenerate coordinates set $N$ and $B_1$. In this section, we characterize the local behaviors of PDHG for LP afterward. More specifically, we show that PDHG iterates converge to an optimal solution linearly after identification, and the local linear convergence rate is characterized by a local sharpness constant $\alpha_{L_2}$ for a homogeneous linear inequality system. The local convergence can be much faster than the global linear convergence rate obtained in~\cite{applegate2023faster}, which is characterized by the Hoffman constant of the KKT system of the LP. This provides a theoretical explanation of the eventual fast rate of PDHG for LP.

Throughout this section, we consider the iterates $z^k$ after identification, i.e., for $k\ge K$. In this stage, it holds that $\Vert z^k-z^* \Vert_2\leq \frac{\delta}{2}$ and $x^{k}_N = 0,\; x^k_{B_1}>0,\; c_N-A_N^Ty^k >0$ from Theorem \ref{thm:thm-identification}. We call this stage the local setting. To develop the eventual faster linear convergence rate in the local setting, we first reformulate the KKT system of LP to remove the non-basic variables. Then we show the faster local linear convergence rate using the sharpness of a homogeneous linear inequality system.






Recall that the KKT system for original linear programming \eqref{eq:minmax} is
\begin{equation}\label{eq:eq-original}
    \begin{cases}
        Ax=b,\; x\geq 0 \\
        A^Ty \leq c \\
        c^Tx-b^Ty\leq 0
    \end{cases} \ .
\end{equation}
Every solution that satisfies \eqref{eq:eq-original} is an optimal primal-dual solution pair to the original LP \eqref{eq:minmax}.
In the local setting, note that $x^{k}_N = 0, \; c_N-A_N^Ty^k >0$, \eqref{eq:eq-original} can be reduced to the following system by removing non-basic coordinates $N$:
\begin{equation}\label{eq:eq-pd-reduce}
    \begin{cases}
        A_Bx_B=b,\; x_B\geq 0 \\
        A_B^Ty \leq c_B \\
        c_B^Tx_B-b^Ty\leq 0
    \end{cases} \ .
\end{equation}
In particular, for any local iterate $z^k$ for $k\ge K$, if \eqref{eq:eq-pd-reduce} is satisfied, then it is an optimal solution to \eqref{eq:eq-original}.


Denote $\widetilde{\mathcal X_B^*}\times \widetilde{\mathcal Y^*}$ the solution set to \eqref{eq:eq-pd-reduce}. Recall that $z^*$ is the optimal solution that the algorithm converges to. By the definition of set $B$ and $N$, we have $z^*=(x^*,y^*)=(x^*_N, x^*_B, y^*)$, where $x^*_N=0$. Since $z^*$ satisfy \eqref{eq:eq-original}, it must hold that $(x_B^*,y^*)\in \widetilde{\mathcal X_B^*}\times \widetilde{\mathcal Y^*}$. Moreover, from the definition of set $N$ and $B_1, B_2$, it holds that
\begin{equation}\label{eq:eq-pd-active}
    \begin{cases}
        A_Bx_B^*=b,\; x_{B_1}^*> 0,\; x_{B_2}^*=0\\
        A_B^Ty^*=c_B \\
        c_B^Tx_B^*-b^Ty^*=0
    \end{cases} \ .
\end{equation} 

Next, we discuss the characterization of local sharpness properties for~\eqref{eq:eq-pd-reduce} and its critical role in the local convergence rate. The idea is to connect~\eqref{eq:eq-pd-reduce} in the local setting with a homogeneous conic system. The results explain why the local rate is much faster than the global rate that is dependent on the global Hoffman constant.


In this section, we aim to derive the local convergence complexity of PDHG on LP. 
To develop the eventual faster linear convergence rate, we first analyze the sharpness of the reduced primal-dual system of LP and build the connection with a homogeneous linear inequality system. Based on the understanding of local sharpness, a refined local convergence rate is then derived. 

We consider the following homogeneous linear inequality system
    \begin{equation}\label{eq:local-cone}
        A_Bu_B=0,\;u_{B_2}\geq 0,\; A_B^Tv\leq 0,\; \tfrac{1}{R}(c_B^Tu_B-b^Tv)\leq 0 \ .
    \end{equation}
    and denote $\mathcal W^*:=\left\{(u_B,v)\;|\;A_Bu_B=0,A_B^Tv\leq 0,\frac 1R (c_B^Tu_B-b^Tv)\leq 0, u_{B_2}\geq 0  \right\}$ the feasible set of \eqref{eq:local-cone}. {\blue Denote $R_2=\delta+\|z^*\|$.} Denote $\alpha_{L_2}$ the sharpness constant to \eqref{eq:local-cone}, i.e., for any $(u_B,v)\in\mathbb R^{|B|+m}$,
    \begin{equation*}
        \alpha_{L_2}\mathrm{dist}((u_B,v),\mathcal W^*)\leq\left\| \begin{pmatrix}
            A_Bu_B \\ [A_B^Tv]^+ \\ [-u_{B_2}]^+ \\ \textcolor{black}{\frac{1}{ R_2}}[c_B^Tu_B-b^Tv]^+
        \end{pmatrix}  \right\|_2 \ .
    \end{equation*}

{There is a slight difference of the sharpness constants $\alpha_{L_1}$ and $\alpha_{L_2}$, because they correspond to slightly different linear inequality systems, \eqref{eq:active-cone} and \eqref{eq:local-cone}, respectively. The difference is that in the Stage II (i.e., $\alpha_{L_2}$), the non-basic set $N$ is identified, i.e., $x_N^k=0$. Thus in Stage II, the iterates of PDHG are driven by sharpness $\alpha_{L_2}$ of system \eqref{eq:local-cone} which can ignore the constraints in non-basic set $N$, while sharpness $\alpha_{L_1}$ of \eqref{eq:active-cone} does depend on the whole constraint matrix. Nevertheless, $\alpha_{L_1}$ and $\alpha_{L_2}$ are sharpness of homogeneous linear inequality systems \eqref{eq:active-cone} and \eqref{eq:local-cone} respectively and thus exhibit simpler and tighter characterizations, as stated in Section \ref{sec:homo-cone}. {\blue We present a simple example in Appendix \ref{app:example} that demonstrates $\alpha_{L_2}$ can be arbitrarily larger than the global sharpness constant $\alpha$.}
}

Theorem \ref{thm:thm-local} presents the local linear convergence of PDHG after identification.
\begin{thm}[Local Linear Convergence]\label{thm:thm-local}
Consider PDHG for solving \eqref{eq:minmax} with step-size $s\leq \frac{1}{2\|A\|_2}$. Let $\{z^k=(x^k,y^k)\}_{k=0}^{\infty}$ be the iterates of the algorithm and let $z^*$ be the converging optimal solution, i.e., $z^k \rightarrow z^*$. Then it holds for any $k>K$ that
\begin{align*}
    \|z^k-z^*\|_2 \leq 4\delta\exp\pran{ -\frac{k-K}{2\lceil 4e/(s^2\alpha_{L_2}^2)\rceil} } \ ,
\end{align*}
where $K$ is the maximal length of the first stage defined in \eqref{eq:eq-ident-bound}.
\end{thm}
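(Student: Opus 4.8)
The plan is to reduce Stage~II to a fresh run of PDHG on the smaller LP obtained by deleting the non-basic block $N$, and then to invoke the linear-rate bound of Theorem~\ref{thm:thm-linear-z}(b) with the sharpness constant supplied by the homogeneous system~\eqref{eq:local-cone} (i.e.\ $\alpha_{L_2}$) rather than by the global Hoffman constant. The reduction is what turns the "local" behaviour into a global statement about a new LP, and the local sharpness is what gives the refined rate.

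First I would make the reduction precise. By Theorem~\ref{thm:thm-identification}, for every $k\ge K$ we have $x^k_N=0$, so the recursion of Algorithm~\ref{alg:pdhg} decouples: the $N$-block stays at $0$, the $x_B$-update is $x^{k+1}_B=\mathrm{proj}_{\mathbb R_+^{|B|}}(x^k_B-s(c_B-A_B^Ty^k))$, and since $x^k_N=x^{k+1}_N=0$ the dual update is $y^{k+1}=y^k-s(A_B(2x^{k+1}_B-x^k_B)-b)$. Hence $\bar z^k:=(x^k_B,y^k)$, $k\ge K$, is exactly the PDHG sequence for the reduced min-max problem $\min_{x_B\ge0}\max_y c_B^Tx_B-y^TA_Bx_B+b^Ty$ initialized at $\bar z^K$; it converges to $\bar z^*:=(x^*_B,y^*)$, which is feasible for the reduced KKT system~\eqref{eq:eq-pd-reduce} by~\eqref{eq:eq-pd-active}. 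Because $x^k_N=0$, for all $k\ge K$ one has $\|z^k-z^*\|_2=\|\bar z^k-\bar z^*\|_2$ and $\|z^k-z^*\|_{P_s}=\|\bar z^k-\bar z^*\|_{\bar P_s}$, where $\bar P_s$ is the $P_s$-matrix built from $A_B$; and $\|A_B\|_2\le\|A\|_2$ forces $s\le\tfrac1{2\|A_B\|_2}$, so all hypotheses carry over to the reduced problem, including the $\ell_2$--$\bar P_s$ equivalence of Lemma~\ref{lem:lem-norm}.

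Next I would establish, mimicking Lemma~\ref{lem:better-hoffman}, that the reduced LP is $\alpha_{L_2}$-sharp along its iterates. Writing $u_B=x_B-x^*_B$, $v=y-y^*$ and using~\eqref{eq:eq-pd-active}, a point $(x_B,y)$ with $\|(x_B,y)-\bar z^*\|_2\le\delta$ solves~\eqref{eq:eq-pd-reduce} iff $(u_B,v)\in\mathcal W^*$ — the constraints $x_{B_1}\ge0$ are inactive on this ball since $\min_{i\in B_1}x^*_i\ge\delta$ by Definition~\ref{def:delta}. Thus $\big(\widetilde{\mathcal X_B^*}\times\widetilde{\mathcal Y^*}\big)\cap B_\delta(\bar z^*)=\bar z^*+\big(\mathcal W^*\cap B_\delta(0)\big)$. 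Since each $\bar z^k$, $k\ge K$, lies in $B_{\delta/2}(\bar z^*)$ by Theorem~\ref{thm:thm-identification}, the nearest point of either $\widetilde{\mathcal X_B^*}\times\widetilde{\mathcal Y^*}$ or $\bar z^*+\mathcal W^*$ to $\bar z^k$ is within $\delta/2$ of $\bar z^k$, hence within $\delta$ of $\bar z^*$, hence in the common intersection; therefore $\mathrm{dist}(\bar z^k,\widetilde{\mathcal X_B^*}\times\widetilde{\mathcal Y^*})=\mathrm{dist}\big((u^k_B,v^k),\mathcal W^*\big)$. Feeding this into the definition of $\alpha_{L_2}$ (note $[-u^k_{B_2}]^+=[-x^k_{B_2}]^+=0$ because $x^k\ge0$), then Proposition~\ref{prop:prop-res-gap} for the reduced problem with radius bound $R_2$ (legitimate since $\|\bar z^k\|_2\le\|z^k\|_2\le\tfrac\delta2+\|z^*\|_2\le R_2$) and \cite[Proposition~1]{lu2021linear}, gives for all $k\ge K$ and $r\in(0,R_2]$ that $\alpha_{L_2}\,\mathrm{dist}(\bar z^k,\widetilde{\mathcal X_B^*}\times\widetilde{\mathcal Y^*})\le\rho^{\mathrm{red}}_r(\bar z^k)$, with $\rho^{\mathrm{red}}_r$ the normalized duality gap of the reduced problem.

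Finally, applying Theorem~\ref{thm:thm-linear-z}(b) to the reduced PDHG sequence $\{\bar z^{K+j}\}_{j\ge0}$ with this sharpness constant yields $\|z^{K+j}-z^*\|_{P_s}=\|\bar z^{K+j}-\bar z^*\|_{\bar P_s}\le 4\exp\!\big(-\tfrac{j}{2\lceil4e/(s^2\alpha_{L_2}^2)\rceil}\big)\,\mathrm{dist}_{\bar P_s}(\bar z^K,\widetilde{\mathcal X_B^*}\times\widetilde{\mathcal Y^*})$; bounding the distance by $\|\bar z^K-\bar z^*\|_{\bar P_s}\le\sqrt{2/s}\,\|z^K-z^*\|_2\le\sqrt{2/s}\cdot\tfrac\delta2$ (Lemma~\ref{lem:lem-norm}, Theorem~\ref{thm:thm-identification}) and converting back via $\|z^{K+j}-z^*\|_2\le\sqrt{2s}\,\|z^{K+j}-z^*\|_{P_s}$, the prefactors collapse through $\sqrt{2s}\cdot\sqrt{2/s}=2$ to give exactly $\|z^{K+j}-z^*\|_2\le 4\delta\exp\!\big(-\tfrac{j}{2\lceil4e/(s^2\alpha_{L_2}^2)\rceil}\big)$, i.e.\ the claim with $j=k-K$. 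The main obstacle is the sharpness step: one must verify both inclusions showing that, on the $\delta$-neighborhood of $z^*$, the reduced optimal set really coincides with the shifted cone $\bar z^*+\mathcal W^*$, and that the Euclidean projections onto these two sets agree along the iterates — so that the well-behaved homogeneous constant $\alpha_{L_2}$, not the global Hoffman constant, governs the tail. The reduction and the black-box application of Theorem~\ref{thm:thm-linear-z}(b) are comparatively routine.
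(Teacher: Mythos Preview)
Your proposal is correct and follows essentially the same route as the paper. The only cosmetic difference is that you make the reduction to the smaller LP explicit---treating $(x_B^k,y^k)$ as a fresh PDHG run on the reduced problem and applying Theorem~\ref{thm:thm-linear-z}(b) there---whereas the paper stays in the ambient space, notes that $\rho_r^B(x_B^k,y^k)=\rho_r(x^k,y^k)$ once $x_N^k=0$, establishes $\alpha_{L_2}$-sharpness of the original normalized duality gap along the iterates via Lemma~\ref{lem:prop-primal-hoffman}, and then invokes Theorem~\ref{thm:thm-linear-z}(b) directly on $\{z^k\}_{k\ge K}$ with respect to the full optimal set $\mathcal X^*\times\mathcal Y^*$; the final norm conversions and constants are identical in both.
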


Theorem \ref{thm:thm-local} shows that after identification, PDHG can obtain an $\epsilon$-close optimal solution within $$\mathcal O\pran{\frac{1}{s^2\alpha_{L_2}^2}\log\pran{\frac{\delta}{\epsilon}}}$$ iterations. Combining Theorem \ref{thm:thm-identification} and Theorem \ref{thm:thm-local}, the complexity of PDHG finding an $\epsilon$-close optimal solution is
\begin{equation*}
        \mathcal O\pran{ \max\left\{1,\frac{1}{s^2\alpha_{L_1}^2}\right\}\frac{R^2}{\delta^2}  + \frac{1}{s\|A\|_2} + \frac{1}{s^2\alpha_{L_2}^2}\log\pran{\frac{\delta}{\epsilon}} } \ .
    \end{equation*}
    In particular, when setting step-size $s=\frac{C}{\Vert A\Vert_2}$ with $C\leq \frac 12$, the total complexity becomes
    { \begin{equation*}
        \mathcal O\pran{\max\left\{1,\frac{\|A\|_2^2}{\alpha_{L_1}^2}\right\}\frac{R^2}{\delta^2} +\frac{\Vert A\Vert_2^2}{\alpha_{L_2}^2}\log\pran{\frac{\delta}{\epsilon}} } \ .
    \end{equation*} }


The rest of this section presents a proof of Theorem \ref{thm:thm-local}. First, we show that $\alpha_{L_2}$ is a local sharpness constant of the non-homogeneous linear inequality system \eqref{eq:eq-pd-active} for $(x_B,y)$ that is close enough to $(x_B^*,y^*)$.

\begin{lem}\label{lem:prop-primal-hoffman}
It holds for any $(x_B,y)$ such that $\Vert (x_B,y)-(x_B^*,y^*) \Vert_2 \leq \frac{\delta}{2}$ and $x_{B}\geq 0$ that
    \begin{equation*}
        \alpha_{L_2}\mathrm{dist}((x_B,y),\widetilde{\mathcal X_B^*}\times \widetilde{\mathcal Y^*}) \leq \left\Vert \begin{pmatrix}
                A_Bx_B-b \\ [A_B^Ty-c_B]^+ \\ [\textcolor{black}{\frac{1}{R_2}}\pran{c_B^Tx_B-b^Ty}]^+
            \end{pmatrix} \right\Vert_2 \ .
    \end{equation*}
\end{lem}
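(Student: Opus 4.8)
The plan is to reduce Lemma~\ref{lem:prop-primal-hoffman} to the sharpness of the homogeneous system \eqref{eq:local-cone}, exactly mirroring the argument of Lemma~\ref{lem:better-hoffman}(a)-(b), and then to handle the one genuine complication: the objective-gap constraint in \eqref{eq:eq-pd-active} is an \emph{equality} $c_B^Tx_B^*-b^Ty^*=0$, whereas in the cone system \eqref{eq:local-cone} and in the residual of the lemma it appears only as a one-sided inequality. First I would establish the analogue of Lemma~\ref{lem:better-hoffman}(a): that $\widetilde{\mathcal X_B^*}\times\widetilde{\mathcal Y^*} = (x_B^*,y^*) + \mathcal W^*$. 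One inclusion is immediate — if $(u_B,v)\in\mathcal W^*$ then $A_B(x_B^*+u_B)=b$, $(x_B^*+u_B)_{B_2}=u_{B_2}\geq 0$, $A_B^T(y^*+v)=c_B+A_B^Tv\leq c_B$, and $\frac1R(c_B^T(x_B^*+u_B)-b^T(y^*+v))=\frac1R(c_B^Tu_B-b^Tv)\leq 0$, using \eqref{eq:eq-pd-active} termwise; so $(x_B^*,y^*)+\mathcal W^*\subset\widetilde{\mathcal X_B^*}\times\widetilde{\mathcal Y^*}$. The reverse inclusion is the subtle direction: given $(x_B,y)$ solving \eqref{eq:eq-pd-reduce}, I need $(x_B-x_B^*,y-y^*)\in\mathcal W^*$, and the only non-obvious requirement is $\frac1R(c_B^T(x_B-x_B^*)-b^T(y-y^*))\leq 0$, i.e. $c_B^Tx_B-b^Ty\leq c_B^Tx_B^*-b^Ty^*=0$, which holds precisely because the last inequality in \eqref{eq:eq-pd-reduce} is $c_B^Tx_B-b^Ty\leq 0$ and the optimal gap is exactly $0$ by \eqref{eq:eq-pd-active}.

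Having the shift identity, the translation-invariance of distance gives, for any $(x_B,y)$,
\begin{align*}
\alpha_{L_2}\,\mathrm{dist}\big((x_B,y),\widetilde{\mathcal X_B^*}\times\widetilde{\mathcal Y^*}\big)
&= \alpha_{L_2}\,\mathrm{dist}\big((x_B-x_B^*,\,y-y^*),\,\mathcal W^*\big) \\
&\leq \left\Vert \begin{pmatrix} A_B(x_B-x_B^*) \\ [A_B^T(y-y^*)]^+ \\ [-(x_B-x_B^*)_{B_2}]^+ \\ \tfrac{1}{R_2}[c_B^T(x_B-x_B^*)-b^T(y-y^*)]^+ \end{pmatrix}\right\Vert_2 .
\end{align*}
Now I substitute \eqref{eq:eq-pd-active} into each block: $A_B(x_B-x_B^*)=A_Bx_B-b$; $[A_B^T(y-y^*)]^+=[A_B^Ty-c_B]^+$; and $\tfrac{1}{R_2}[c_B^T(x_B-x_B^*)-b^T(y-y^*)]^+=\tfrac{1}{R_2}[c_B^Tx_B-b^Ty]^+$. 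The only block that does \emph{not} collapse exactly is the third one: $[-(x_B-x_B^*)_{B_2}]^+$ versus $[-x_{B_2}]^+$. Here is where the hypotheses $\Vert(x_B,y)-(x_B^*,y^*)\Vert_2\leq\delta/2$ and $x_B\geq 0$ enter. Since $x_{B_2}\geq 0$, we have $[-x_{B_2}]^+=0$; I claim the other term is also controlled — indeed for $i\in B_2$, $x_i^*=0$ by \eqref{eq:eq-pd-active}, so $(x_B-x_B^*)_i=x_i\geq 0$, hence $[-(x_B-x_B^*)_{B_2}]^+=[-x_{B_2}]^+=0$ as well, with no need for the $\delta/2$ bound on this block. (The $\delta/2$ hypothesis is what guarantees the residual on the right is the one stated rather than needing extra terms, and it ties the lemma to the identification regime of Theorem~\ref{thm:thm-identification}; I would note that $x_B\geq 0$ alone suffices to kill the $B_2$-block.) Assembling, the right-hand side equals exactly $\big\Vert(A_Bx_B-b,\,[A_B^Ty-c_B]^+,\,[\tfrac{1}{R_2}(c_B^Tx_B-b^Ty)]^+)\big\Vert_2$, which is the claimed bound.

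The main obstacle, such as it is, is conceptual rather than computational: one must be careful that \eqref{eq:eq-pd-reduce} is the \emph{right} reduced system — that dropping the $N$-coordinates genuinely preserves the solution set in the local regime — and that the optimal duality gap being exactly zero (not merely nonpositive) is what makes the equality-versus-inequality mismatch harmless in the reverse inclusion $\widetilde{\mathcal X_B^*}\times\widetilde{\mathcal Y^*}\subset(x_B^*,y^*)+\mathcal W^*$. Everything else is a termwise translation using \eqref{eq:eq-pd-active}, entirely parallel to the proof of Lemma~\ref{lem:better-hoffman}(a)-(b). I would write the two set-inclusions explicitly as a display, then the chain of (in)equalities above as a second display, and conclude.
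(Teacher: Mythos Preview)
Your shift identity $\widetilde{\mathcal X_B^*}\times\widetilde{\mathcal Y^*} = (x_B^*,y^*)+\mathcal W^*$ is false, and the inclusion that actually fails is precisely the one you need. The system \eqref{eq:eq-pd-reduce} requires $x_B\geq 0$, which means $x_{B_1}\geq 0$ as well as $x_{B_2}\geq 0$; but the cone $\mathcal W^*$ in \eqref{eq:local-cone} places \emph{no sign constraint on $u_{B_1}$}. So for $(u_B,v)\in\mathcal W^*$ with $u_{B_1}$ large and negative (which is allowed as soon as the kernel of $A_B$ is nontrivial on the $B_1$ coordinates, since $\mathcal W^*$ is a cone), $(x_B^*+u_B)_{B_1}$ can be negative and the point fails to lie in $\widetilde{\mathcal X_B^*}\times\widetilde{\mathcal Y^*}$. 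In your verification of the ``immediate'' inclusion you only checked $(x_B^*+u_B)_{B_2}\geq 0$ and silently dropped the $B_1$ block. The reverse inclusion $\widetilde{\mathcal X_B^*}\times\widetilde{\mathcal Y^*}\subset(x_B^*,y^*)+\mathcal W^*$ does hold, but that gives the distance inequality in the wrong direction.

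This is exactly where the hypothesis $\|(x_B,y)-(x_B^*,y^*)\|_2\leq\delta/2$ does real work, contrary to your parenthetical remark that ``$x_B\geq 0$ alone suffices.'' The paper's fix is to localize: one shows only $(x_B^*,y^*)+\bigl(\mathcal W^*\cap B_{\delta/2}(0)\bigr)\subset\widetilde{\mathcal X_B^*}\times\widetilde{\mathcal Y^*}$, which is true because $\|u_{B_1}\|\leq\delta/2$ together with $x_{B_1}^*\geq\delta\cdot\mathbf 1$ forces $(x_B^*+u_B)_{B_1}\geq\tfrac{\delta}{2}\cdot\mathbf 1>0$. The remaining issue is that restricting $\mathcal W^*$ to the ball could in principle increase the distance; here one uses that $0\in\mathcal W^*$ and the projection onto $\mathcal W^*$ is nonexpansive, so for $\|(u_B,v)\|\leq\delta/2$ the projection $P_{\mathcal W^*}(u_B,v)$ already lies in $B_{\delta/2}(0)$ and hence $\mathrm{dist}((u_B,v),\mathcal W^*)=\mathrm{dist}((u_B,v),\mathcal W^*\cap B_{\delta/2}(0))$. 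With these two ingredients your chain goes through; without them it does not.
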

\begin{proof}
Denote $\widetilde{\mathcal{Z}}=\{(x_B,y): \Vert (x_B,y)-(x_B^*,y^*) \Vert_2 \leq \frac{\delta}{2}, x_{B}\geq 0\}$. Notice by the definition of $\delta$, we have $x_{i}^*\geq\delta$ for any $i\in B_1$. Thus, it holds for any $(x,y)\in \widetilde{\mathcal{Z}}$ and $i\in B_1$ that $x_{i}\ge x_{i}^* - \Vert x_B-x_B^* \Vert_2\ge \frac{\delta}{2}>0$. In other words,  the constraint $\Vert (x_B,y)-(x_B^*,y^*) \Vert_2 \leq \frac{\delta}{2}$ implies $x_{B_1}\ge 0$. Therefore, we have
    \begin{equation}\label{eq:eq-degeneracy}
        \widetilde{\mathcal{Z}}=\{(x_B,y): \Vert (x_B,y)-(x_B^*,y^*) \Vert_2 \leq \tfrac{\delta}{2}, x_{B_2}\geq 0\} \ .
    \end{equation}
    
    Now denote $u_B=(u_{B_1},u_{B_2}):=x_B-x_B^*$ as the shift of $x_B$ from $x_B^*$, namely, $u_{B_1}=x_{B_1}-x_{B_1}^*$ and $u_{B_2}=x_{B_2}-x_{B_2}^*=x_{B_2}$ (by definition of $B_2$ we have $x_{B_2}^*=0$). Denote $v:=y-y^*$ as the shift of $y$ from $y^*$.  It then holds that $(u_B,v)\in \mathcal{W}^*:= \{(u_B,v)\;|\;\|(u_B,v)\|_2\leq \frac{\delta}{2}, u_{B_2}\geq 0\}$. 

    
    Next, notice $0\in \mathcal W^*$, thus $P_{\mathcal W^*}(0)=0$. It then follows from the nonexpansiveness of the projection operator that
    \begin{equation*}
        \| P_{\mathcal W^*}(u_B,v) \|_2 = \| P_{\mathcal W^*}(u_B,v)-P_{\mathcal W^*}(0) \|_2 \leq \| (u_B,v)-(0,0)\|_2=\|(u_B,v)\|_2\leq \delta/2 \ .
    \end{equation*}
    Thus, we have $P_{\mathcal W^*}(u_B,v)\in B_{\delta/2}(0)$ for any $(u_B,v)\in \mathcal W$. As a result, it holds for all $(u_B,v)\in \mathcal W$ that
    \begin{equation}\label{eq:w-proj}
        \mathrm{dist}((u_B,v),\mathcal W^*) =\mathrm{dist}((u_B,v),\mathcal W^*\cap B_{\delta/2}(0)) \ .
    \end{equation}
    Since $(x^*_B,y^*)$ satisfies \eqref{eq:eq-pd-active}, we have
    \begin{align}\label{eq:eq-shift}
        \begin{split}
        & \quad \  \Vert A_Bx_B-b \Vert_2^2 + \Vert [A_B^Ty-c_B]^+\Vert_2^2 + \Vert [\textcolor{black}{\tfrac{1}{R_2}}\pran{c_B^Tx_B-b^Ty}]^+\Vert_2^2 \\
        & = \Vert A_{B}(x_{B}-x_{B}^*) \Vert_2^2+ \Vert [A_B^T(y-y^*)]^+\Vert_2^2 + \Vert \textcolor{black}{\tfrac{1}{R_2}}[c_B^T(x_B-x_B^*)-b^T(y-y^*)]^+\Vert_2^2\\
        & = \Vert A_{B}u_{B} \Vert_2^2+\Vert [A_B^Tv]^+\Vert_2^2+\Vert \textcolor{black}{\tfrac{1}{R_2}}[c_B^Tu_B-b^Tv]^+\Vert_2^2 \ .
        \end{split}
    \end{align} 
    Furthermore, for any $(u_B,v)\in \mathcal W^*\cap B_{\frac{\delta}{2}}(0)$, we have $(x_B^*+u_B,y^*+v)\in \widetilde{\mathcal X_B^*}\times\widetilde{\mathcal Y^*}$, because
    \begin{align*}
        \begin{split}
            & \ A_B (x_B^*+u_B)-b=A_Bx_B^*-b+A_Bu_B=0\\
            & \ A_B^T(y^*+v)-c_B = A_B^Ty^*-c_B+A_B^Tv \leq 0\\
            & \ c_B^T(x_B^*+u_B)-b^T(y^*+v)=c_B^Tx_B^*-b^Ty^*+c_B^Tu_B-b^Tv\leq 0 \\
            & \ x_{B_1}^*+u_{B_1} \geq \tfrac{\delta}{2}\mathbf{1}_{|B_1|}> 0,\; x_{B_2}^*+u_{B_2}=u_{B_2} \geq 0 \ ,
        \end{split}
    \end{align*}
    thus it holds that
    \begin{equation}\label{eq:subset}
        (x_B^*,y^*)+ \mathcal W^*\cap B_{\frac{\delta}{2}}(0) \subset  \widetilde{\mathcal X_B^*}\times\widetilde{\mathcal Y^*} \ .
    \end{equation}
    
    Finally, by the definition of $\alpha_{L_1}$, we have
    {\small 
    \begin{align}\label{eq:alpha}
        \begin{split}
            \Vert A_{B}u_{B} \Vert_2^2+ \Vert [A_B^Tv]^+\Vert_2^2+ \Vert [\textcolor{black}{\tfrac{1}{R_2}}(c_B^Tu_B-b^Tv)]^+ \Vert_2^2 & \ \geq \alpha_{L_2}^2\mathrm{dist}^2((u_B,v),\mathcal W^*) \\
            & \ = \alpha_{L_2}^2\mathrm{dist}^2((u_B,v),\mathcal W^*\cap B_{\delta/2}(0)) \\
            & \ = \alpha_{L_2}^2\mathrm{dist}^2((x_B-x_B^*,y-y^*),(x_B^*,y^*)+\mathcal W^*\cap B_{{\delta}/{2}}(0)-(x_B^*,y^*))\\
            & \ =\alpha_{L_2}^2\mathrm{dist}^2((x_B,y),(x_B^*,y^*)+\mathcal W^*\cap B_{{\delta}/{2}}(0))\\
            & \ \geq \alpha_{L_2}^2\mathrm{dist}^2((x_B,y),\widetilde{\mathcal X_B^*}\times\widetilde{\mathcal Y^*}) \ ,
        \end{split}
    \end{align}
    }
    where the first equality uses \eqref{eq:w-proj} and the last inequality utilizes \eqref{eq:subset}.  We finish the proof by combining \eqref{eq:eq-shift} and \eqref{eq:alpha}.
\end{proof}

Now we are ready to prove Theorem \ref{thm:thm-local}.
\begin{proof}[Proof of Theorem \ref{thm:thm-local}]

Notice that $k>K$. It follows from Theorem \ref{thm:thm-identification} that $\Vert z^k-z^*\Vert_2\leq \frac{\delta}{2}$ and $x^k_N=0$. Denote the local normalized duality gap as
\begin{equation*}
    \rho^B_r(x_B,y)=\max_{(\hat x_B, \hat y)  \in W_r(x_B,y)}\frac{L^B(x_B,\hat y)-L^B(\hat x_B,y)}{r} \ ,
\end{equation*}
where $L^B(x_B,y)=c_B^Tx_B-y^TA_Bx_B+b^Ty$.
Then we have {\blue for any $r\in(0,\tilde R]$ that}
    \begin{align*}
        \begin{split}
            \alpha_{L_2} \mathrm{dist}((x^k,y^k),\mathcal X^*\times \mathcal Y^*) & \ \leq \alpha_{L_2}\mathrm{dist}((0_N,x_B^k,y^k),\{0_N\}\times\widetilde{\mathcal X_B^*}\times \widetilde{\mathcal Y^*})\\ 
            & \ = \alpha_{L_2}\mathrm{dist}((x_B^k,y^k),\widetilde{\mathcal X_B^*}\times \widetilde{\mathcal Y^*}) \\ 
            & \ \leq \left\Vert \begin{pmatrix}
                A_Bx_B^k-b \\ [A_B^Ty^k-c_B]^+ \\ [\textcolor{black}{\frac{1}{R_2}}\pran{c_B^Tx_B^k-b^Ty^k}]^+
            \end{pmatrix} \right\Vert_2\\
            & \ \leq  \rho^B_r(x_B^k,y^k)=\rho_r(x^k,y^k) \ ,
        \end{split}
    \end{align*}
    where the second inequality is due to definition of $\alpha_{L_2}$ and the last inequality utilizes Proposition \ref{prop:prop-res-gap} and \textcolor{black}{$\|(x_B^k,y^k)\|_2\leq \|(x^k,y^k)\|_2\leq \|(x^k,y^k)-(x^*,y^*)\|_2+\|(x^*,y^*)\|_2\leq \delta+\|z^*\|_2=R_2$ . }
    Applying part (b) of Theorem \ref{thm:thm-linear-z}, it holds for any $k>K$ that
\begin{align*}
    \|z^k-z^*\|_2& \ \leq \sqrt{2s}\|z^k-z^*\|_{P_s}\\
    & \ \leq 4\sqrt{2s}\exp\pran{ -\frac{k-K}{2\lceil 4e/(s^2\alpha_{L_2}^2)\rceil} }\mathrm{dist}_{P_s}((x^K,y^K),\mathcal X^*\times \mathcal Y^*)\\
    & \ \leq 8\exp\pran{ -\frac{k-K}{2\lceil 4e/(s^2\alpha_{L_2}^2)\rceil} }\frac{\delta}{2} \ ,
\end{align*}
which finishes the proof.
\end{proof}


\section{Numerical experiments}\label{sec:numerical}
In this section, we present numerical experiments that verify our theoretical results in the previous sections. 

{\bf Dataset.} In the experiments, we utilize the root-node LP relaxation of instances from the \href{https://miplib.zib.de/tag_collection.html}{\texttt{MIPLIB 2017}}. We convert the instances to the following form:
\begin{align}\label{eq:practical-lp-form}
    \begin{split}
        & \min_x\; c^Tx\\
        & \ \mathrm{s.t.}\; A_{E}x=b_E \\
        & \ \quad\;\; A_{I}x\leq b_I\\
        & \ \quad\;\; x\geq 0 \ ,
    \end{split}
\end{align}
and consider its primal-dual formulation
\begin{align}\label{eq:practical-lp}
    \begin{split}
        \min_{x\geq 0}\max_{y_I\leq 0}\; c^Tx-y^TAx+b^Ty \ , 
    \end{split}
\end{align}
where $A_E\in\mathbb R^{m_E\times n}$, $A_I\in \mathbb R^{m_I\times n}$, $A=\begin{pmatrix}
    A_{E} \\ A_{I}
\end{pmatrix}\in\mathbb R^{(m_E+m_I)\times n}$ and $b=\begin{pmatrix}
    b_E \\ b_I
\end{pmatrix}\in\mathbb R^{(m_E+m_I)}$. We choose the instances that can be solved by PDHG (after preconditioning, see below for more details) {\blue to $10^{-10}$ accuracy of KKT residual (see Progress metric for formal definition)} within $3\times 10^5$ iterations, and there are 50 such instances. {\blue Among all 50 instances, the number of variables ranges from 86 to 3648, the number of constraints ranges from 29 to 6972, and the number of nonzeros in the constraint matrices ranges from 376 to 22584.} Notice that there is a difference between \eqref{eq:practical-lp-form} and the standard form~\eqref{eq:standardform} we present in the paper due to the existence of the inequality constraints. Indeed, all of our theory can be generalized to this form without loss of generality. We choose to present the results for the standard form LP \eqref{eq:standardform} because that leads to theoretical results in relatively clean forms.

{\bf Preprocessing.} As a first-order method, PDHG suffers from slow convergence on many instances from \texttt{MIPLIB} due to the ill-conditioning nature of the instances. To overcome this issue, we leverage the diagonal preconditioning heuristic developed in PDLP~\cite{applegate2021practical}.  More specifically, we rescale the constraint matrix $A$ to $\tilde A=D_1AD_2$ with positive definite diagonal matrices $D_1$ and $D_2$. Both vectors $b$ and $c$ are correspondingly rescaled as $\tilde b=D_1b$ and $\tilde c=D_2c$. The matrices $D_1$ and $D_2$ are obtained by running 10 steps of Ruiz scaling \cite{ruiz2001scaling} followed by an $l_2$ Pock-Chambolle scaling \cite{pock2011diagonal} on the constraint matrix $A$. More details about this preconditioning scheme can be found in~\cite{applegate2021practical}.


{\bf Progress metric.} We use KKT residual of \eqref{eq:practical-lp}, i.e., a combination of primal infeasibility, dual infeasibility and primal-dual gap, to measure the performance of current iterates.  More formally, the KKT residual of \eqref{eq:practical-lp-form} is given by 
\begin{equation*}
    \mathrm{KKT}(x,y)=\left\Vert \begin{pmatrix}
        A_{E}x-b_E \\ [A_{I}x-b_{I}]^+ \\ [-x]^+  \\ [A^Ty-c]^+ \\ [y_I]^+ \\  [c^Tx-b^Ty]^+
    \end{pmatrix}  \right\Vert_2 \ .
\end{equation*}

{\bf Computing identification time and $\delta$.} In the experiments, we run PDHG to $10^{-8}$ accuracy to obtain the converging optimal solution $z^*=(x^*,y^*)$. With $z^*$, we can then identify the non-basic solution set $N$, non-degenerate basic solution set $B_1$, and the degenerate basic solution set $B_2$ with respect to $z^*$ for the primal and dual variables, respectively, by
$$N^P=\{i\in [n]: c_i-A_i^Ty^* >0\}, B^P_1=\{i\in [n]\backslash N^P:x_i^*>0\}, B^P_2=[n]\backslash N^P\backslash B^P_1 \ ,$$ and $$N^D=\{j\in I: (b-Ax^*)_j>0\}, B^D_1=\{j\in I\backslash N^D:y_j^*<0 \}, B^D_2=I\backslash N^D\backslash B^D_1 \ .$$

Then we go backward from the last iteration to find the first moment when either $N$ or $B_1$ changes. This iteration is referred to as the empirical identification moment.  The non-degeneracy metric $\delta$ for \eqref{eq:practical-lp} can be computed via
\begin{equation}\label{eq:delta-general}
    \delta = \min\left\{ \min_{i\in N^P}\frac{c_i-A_i^Ty^*}{\|A\|_2}, \min_{i\in B^P_1} x^*_i, \min_{j \in N^D} \frac{(b-Ax^*)_j}{\|A\|_2}, \min_{j\in B_1^D} -y_j^* \right\}\ .
\end{equation}

{\bf Results.} Figure \ref{fig:iteration} presents KKT residual versus  PDHG iterations on six representative instances. The orange vertical line in Figure \ref{fig:iteration} represents the empirical identification moment, i.e., subsequently the active set will be fixed and identical to the limit optimal point. As illustrated in Figure \ref{fig:iteration}, there is a phase transition happening around the identification moment (orange line): PDHG identifies active variables in Stage I with a sublinear rate; PDHG has a fast linear convergence in Stage II after identification. 

\begin{figure}[ht!]
	\centering
	\begin{tabular}{l c c c c}
		\hspace{-2Cm}
		& \includegraphics[width=0.3\textwidth]{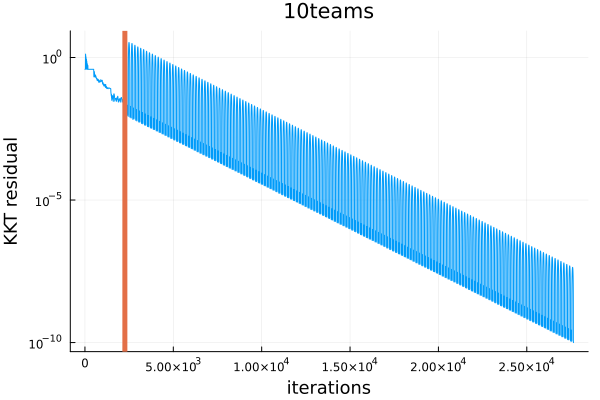}
		\hspace{-0.5Cm}
		& \includegraphics[width=0.3\textwidth]{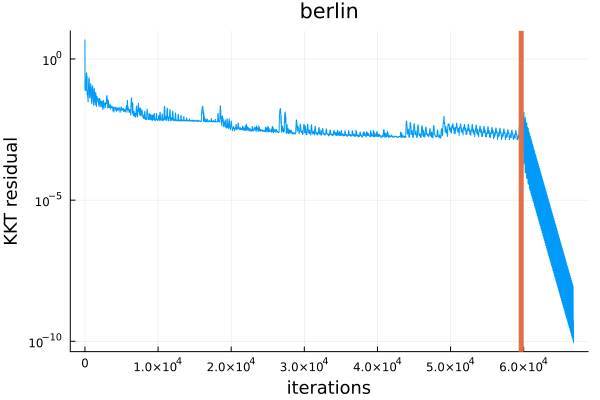}
        \hspace{-0.5Cm}
        & \includegraphics[width=0.3\textwidth]{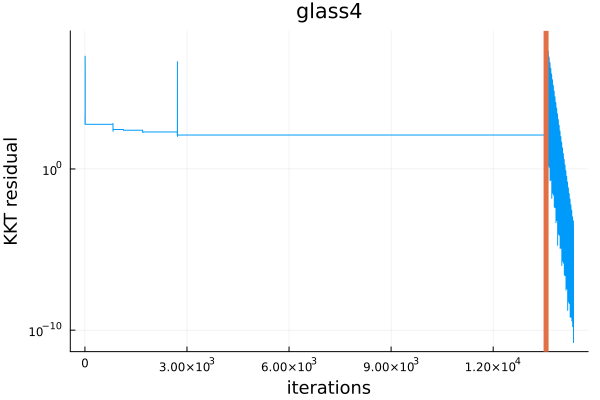}\\
        \hspace{-2Cm}
        & \includegraphics[width=0.3\textwidth]{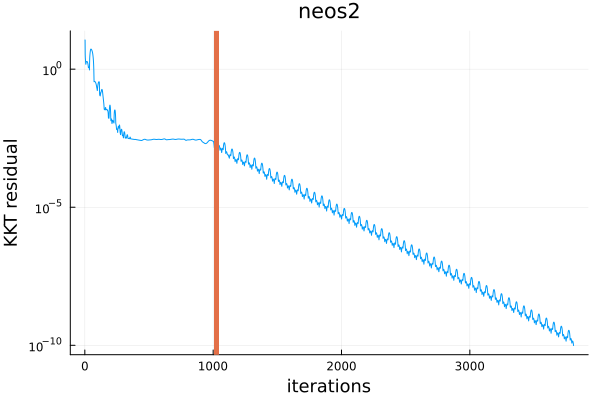}
		\hspace{-0.5Cm}
		& \includegraphics[width=0.3\textwidth]{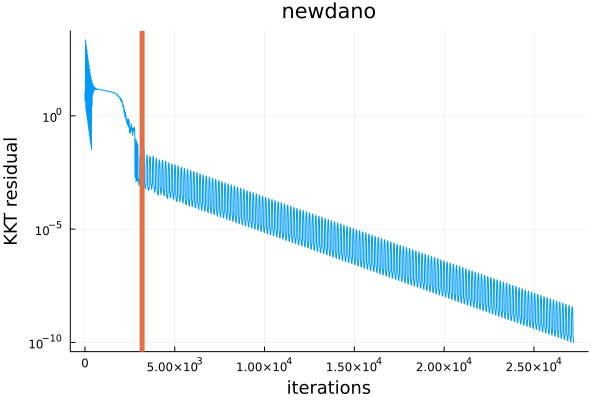}
        \hspace{-0.5Cm}
        & \includegraphics[width=0.3\textwidth]{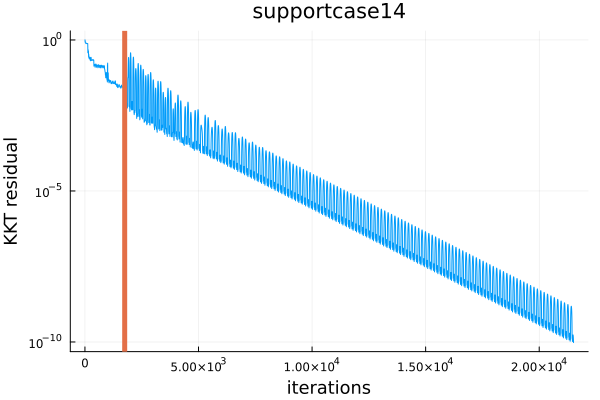}\\
		\hspace{-1Cm}
	\end{tabular}
	\caption{Plots showing the KKT residual in $\log$ scale versus number of iterations of PDHG on six representative LP instances from \texttt{MIPLIB}.}
	\label{fig:iteration}
\end{figure}

\begin{figure}[ht!]
	\centering
	\includegraphics[scale=0.4]{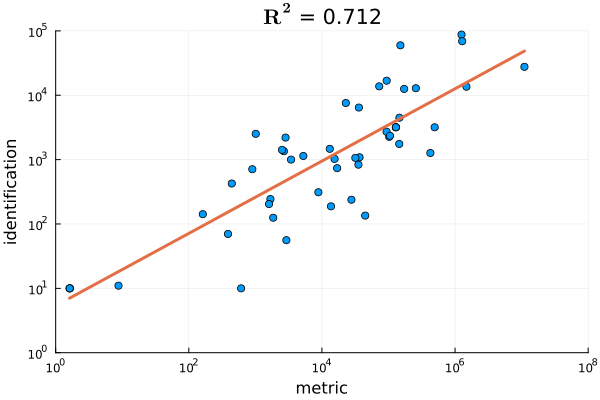}
	\caption{Plot showing the iteration number at identification in $\log$ scale versus non-degeneracy metric $R/\delta$ in $\log$ scale on the 50 \texttt{MIPLIB} instances.}
	\label{fig:scatterplot}
\end{figure}
Theorem \ref{thm:thm-identification} shows a bound on the identification time with the non-degeneracy parameter $\delta$. Figure \ref{fig:scatterplot} presents the scatter plot of the metric $R/\delta$ versus the empirical iteration number for identification, where $R=2\|z^0-z^*\|_2+2\|z^*\|_2+1$ and $\delta$ is defined as \eqref{eq:delta-general}. Each point in Figure \ref{fig:scatterplot} represents an LP instance from our dataset. We observe from Figure \ref{fig:scatterplot} that there is a strong correlation between the empirical identification time and the metric $R/\delta$. Indeed, when running a linear regression model, the $R^2$ is $0.712$, {which means that $71.2\%$ of the variability in the observed identification complexity is captured by this single quantity.}


\begin{figure}[ht!]
	\centering
	\begin{tabular}{c c c c}
		\hspace{-1Cm}
		& \includegraphics[width=0.35\textwidth]{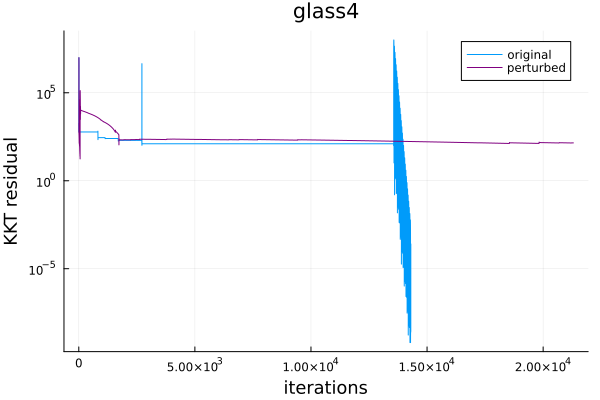}
		\hspace{-0.5Cm}
		& \includegraphics[width=0.35\textwidth]{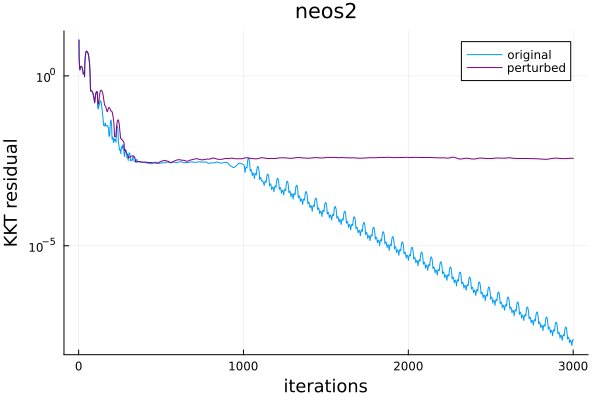}
        \hspace{-0.5Cm}
        & \includegraphics[width=0.35\textwidth]{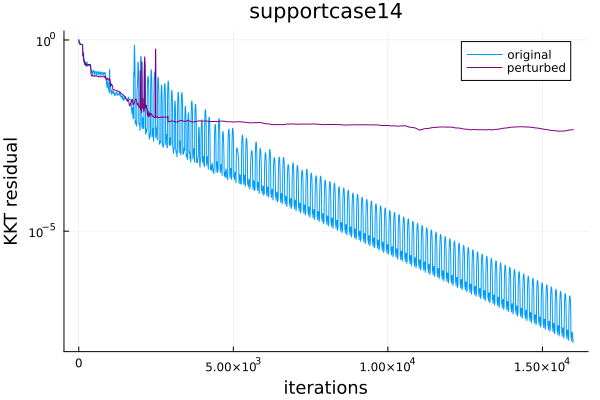}
	\end{tabular}
	\caption{Comparison of original and perturbed LP instances.}
	\label{fig:iteration-perturb}
\end{figure}

An interesting implication of our result is that degeneracy itself does not slow down the convergence, but the near-degeneracy of the non-degenerate part does. Figure \ref{fig:iteration-perturb} demonstrates this theoretical implication by preliminary numerical experiments. We choose three representative LP instances from the dataset. Then, we slightly perturb the instances by adding a small Gaussian noise in $A$, $b$, and $c$, then run PDHG on both the original and the perturbed instances. {\blue We use Gurobi to verify the feasibility of the three perturbed problems.} Figure \ref{fig:iteration-perturb} plots the KKT residual and the number of PDHG iterations for the original LP and the perturbed LP. For each instance in Figure \ref{fig:iteration-perturb}, the blue line represents the performance of PDHG on the original LP, and the purple line is on the perturbed LP.  While the original LP and the perturbed LP are almost identical (because the perturbation is tiny), PDHG converges much faster on the original LP than the perturbed LP. Indeed, almost all practical LP, including the three instances in Figure \ref{fig:iteration-perturb}, are degenerate. Adding the tiny perturbation makes the problem non-degenerate. {\blue This showcases that degeneracy itself does not slow down the convergence of PDHG, but near-degeneracy can make the instance much harder to solve, which verifies our theory.}


In summary, the numerical experiment verifies our geometric understandings of PDHG on LPs: (1) the convergence behavior of PDHG for LP have two distinct stages, that is, Stage I for active variable identification and Stage II for local linear convergence; (2) the non-degeneracy metric $\delta$ plays a crucial role in the length of the first stage; (3) degeneracy itself does not deteriorate the convergence of first-order methods, but near-to-degeneracy does.



\section{Conclusion and future directions}\label{sec:conclusion}
In conclusion, this paper aims to bridge the gap between the practical success and loose complexity bound of PDHG for LP, and to identify geometric quantities of LP that drive the performance of the algorithm. To achieve this, we show that the behaviors of PDHG for LP have two stages: in the first sublinear stage, the algorithm identifies the non-basic and non-degenerate basic set (i.e., the set $N$ and $B_1$) and we present the complexity result of identification in terms of the near-degeneracy metric $\delta$; in the second linear stage, the algorithm effectively solves a homogeneous linear inequality system, and the linear convergence rate is driven by the local sharpness parameter of the system. Compared to existing complexity results of PDHG on LP, our results do not depend on the global Hoffman's constant of the KKT system, which is known to be too loose in the literature. Such two-stage behavior is also tied with the concept of partial smoothness in the non-smooth optimization literature. We introduce a new framework of complexity analysis without assuming the ``irrepresentable'' non-degeneracy condition.

We end the paper by presenting a few interesting open questions for further investigation:
\begin{itemize}
    \item \textbf{Extensions to non-smooth optimization.} The partial smoothness literature in non-smooth optimization always assumes the non-degeneracy condition. {Unfortunately, the iterates of first-order methods usually converge to a degenerate solution, thus violating this condition.
    } A typical argument for avoiding this issue in theory is by adding a small perturbation to the original problem. However, as shown in our numerical experiments (Figure \ref{fig:iteration-perturb}), adding the small perturbation can significantly slow down the convergence of the algorithm. Instead, we show in this paper that degeneracy itself does not slow down the convergence in the context of LP. How to extend this result to general non-smooth optimization can be an interesting future direction.
    \item \textbf{Extensions to other primal-dual algorithms.} We believe the two-stage behaviors are not limited to PDHG. Indeed, most of the theoretical results can be extended to ADMM. This should provide new theoretical understandings for ADMM-based solvers, such as OSQP~\cite{stellato2020osqp} and SCS~\cite{o2016conic}.
    \item \textbf{Extensions to the restarted algorithm.} PDLP utilizes a new technique, restarting, to obtain the optimal linear convergence rate with global sharpness condition. We believe our theoretical results can be extended to the restarted algorithms, and we leave it as future work.
    \item \textbf{Extensions to infeasible problems.} In the paper, we consider the case when the LP instance is feasible and bounded. Such two-stage behaviors are also numerically observed in infeasible problems. How to theoretically analyze it without the non-degeneracy condition can be another interesting question.
\end{itemize}

\bibliographystyle{amsplain}
\bibliography{ref-papers}

\appendix

\section{Proof of Theorem \ref{thm:thm-linear-z}}\label{app:proof_thm_1}
In this section, we provide a proof to Theorem \ref{thm:thm-linear-z}. We begin with a lemma that bridges the distance to converging point with the distance to optimality.
\begin{lem}\label{lem:lem-dist-opt}
Consider the iterations $\{z^k\}_{k=0}^\infty$ of PDHG. Suppose the step-size $s<\frac{1}{\Vert A\Vert}$. Then it holds for any $k\geq 0$ that
\begin{align*}
    \begin{split}
        \Vert z^k-z^* \Vert_{P_s} \leq 2\mathrm{dist}_{P_s}(z^k,\mathcal Z^*) \ ,
    \end{split}
\end{align*}
where $z^*=\lim_{k\rightarrow\infty} z^k$.
\end{lem}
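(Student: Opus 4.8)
\textbf{Proof proposal for Lemma \ref{lem:lem-dist-opt}.}

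The plan is to combine the non-expansiveness of the PDHG iterates (Lemma \ref{lem:property}(a)) with the special ``closest-point'' property of the converging limit $z^*$ (Lemma \ref{lem:property}(c)). Fix $k \geq 0$ and let $\tilde z^* = P_{\mathcal Z^*}(z^k)$ be the $P_s$-projection of $z^k$ onto $\mathcal Z^*$, so that $\|z^k - \tilde z^*\|_{P_s} = \mathrm{dist}_{P_s}(z^k, \mathcal Z^*)$. (One should note the projection here is under the $P_s$-norm; since $P_s \succ 0$ when $s < 1/\|A\|_2$, this is well-defined and the projection onto the closed convex set $\mathcal Z^*$ exists and is unique.) By the triangle inequality in the $P_s$-norm,
\begin{equation*}
    \|z^k - z^*\|_{P_s} \leq \|z^k - \tilde z^*\|_{P_s} + \|\tilde z^* - z^*\|_{P_s} = \mathrm{dist}_{P_s}(z^k, \mathcal Z^*) + \|z^* - \tilde z^*\|_{P_s} \ .
\end{equation*}
It therefore suffices to show $\|z^* - \tilde z^*\|_{P_s} \leq \mathrm{dist}_{P_s}(z^k, \mathcal Z^*)$.

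For the second term, I would invoke Lemma \ref{lem:property}(c) with the particular choice $\tilde z^*$ (which is a valid element of $\mathcal Z^*$): this gives $\|z^k - \tilde z^*\|_{P_s} \geq \|z^* - \tilde z^*\|_{P_s}$, i.e. $\|z^* - \tilde z^*\|_{P_s} \leq \mathrm{dist}_{P_s}(z^k, \mathcal Z^*)$. Substituting back,
\begin{equation*}
    \|z^k - z^*\|_{P_s} \leq \mathrm{dist}_{P_s}(z^k, \mathcal Z^*) + \mathrm{dist}_{P_s}(z^k, \mathcal Z^*) = 2\,\mathrm{dist}_{P_s}(z^k, \mathcal Z^*) \ ,
\end{equation*}
which is the claim.

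The only subtle point — and the step I would treat most carefully — is making sure Lemma \ref{lem:property}(c) is stated (and can be applied) with respect to the $P_s$-norm projection $\tilde z^*$, rather than the Euclidean one; as long as part (c) holds for \emph{all} $\tilde z^* \in \mathcal Z^*$ (which it does, as stated in the excerpt), there is no issue, and no degeneracy/non-expansiveness beyond what is already recorded in Lemma \ref{lem:property} is needed. I would also double-check that $s < 1/\|A\|_2$ is enough to guarantee $P_s \succ 0$ (indeed $P_s = \begin{pmatrix} \frac1s I & -A^T \\ -A & \frac1s I\end{pmatrix} \succ 0$ iff $\frac1s > \|A\|_2$), so that $\|\cdot\|_{P_s}$ is genuinely a norm and all triangle inequalities and projection arguments above are legitimate; this is exactly the hypothesis assumed in the statement. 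Everything else is a one-line application of the triangle inequality.
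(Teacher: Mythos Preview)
Your proof is correct and essentially identical to the paper's. The only cosmetic difference is that the paper re-derives the inequality $\|z^*-\tilde z^*\|_{P_s}\le \|z^k-\tilde z^*\|_{P_s}$ in-line from non-expansiveness (Lemma~\ref{lem:property}(a)) by taking $n\to\infty$ in $\|z^n-\tilde z^*\|_{P_s}\le\|z^k-\tilde z^*\|_{P_s}$, whereas you cite Lemma~\ref{lem:property}(c) directly; after that, both arguments finish with the same triangle inequality.
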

\begin{proof}
By non-expansiveness of the iterates, we have
    \begin{align*}
        \begin{split}
            \Vert z^n-\mathrm{proj}_{\mathcal Z^*}(z^k) \Vert_{P_s} \leq \Vert z^k-\mathrm{proj}_{\mathcal Z^*}(z^k) \Vert_{P_s}=\mathrm{dist}_{P_s}(z^k,\mathcal Z^*) \ .
        \end{split}
    \end{align*}
    Let $n\rightarrow\infty$. It holds that
    \begin{align*}
        \begin{split}
            \Vert z^*-\mathrm{proj}_{\mathcal Z^*}(z^k) \Vert_{P_s} \leq \mathrm{dist}_{P_s}(z^k,\mathcal Z^*) \ .
        \end{split}
    \end{align*}
    Thus we finish the proof by triangle inequality
    \begin{align*}
        \begin{split}
            \Vert z^k-z^* \Vert_{P_s} \leq \Vert z^k-\mathrm{proj}_{\mathcal Z^*}(z^k) \Vert_{P_s} + \Vert \mathrm{proj}_{\mathcal Z^*}(z^k)-z^* \Vert_{P_s} \leq 2\mathrm{dist}_{P_s}(z^k,\mathcal Z^*) \ .
        \end{split}
    \end{align*}
\end{proof}

It turns out that distance to optimality has linear convergence under sharpness \eqref{eq:sharp}.
\begin{lem}\label{lem:linear-converge}
    Consider the iterations $\{z_k\}_{k=0}^{\infty}$ of PDHG (Algorithm \ref{alg:pdhg}) to solve a convex-concave primal-dual problem. Suppose the step-size $s\leq  \frac{1}{2\Vert A \Vert}$, and the primal-dual problem satisfies sharpness condition \eqref{eq:sharp} on a set $\mathcal S$ that contains $\{z_k\}_{k=0}^{\infty}$. Then, it holds for any iteration $k\ge 0$ that
    \begin{equation*}
        \mathrm{dist}^2_{P_s}(z^k,\mathcal Z^*)\leq \exp\pran{1-\frac{k}{\left\lceil 4e/(s\alpha)^2\right\rceil}}\mathrm{dist}^2_{P_s}(z^0,\mathcal Z^*)
    \end{equation*}
\end{lem}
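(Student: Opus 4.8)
The plan is to combine the sublinear decay of the one-step movement with the sharpness condition via a potential-reduction argument over blocks of iterations. The key object is $\mathrm{dist}^2_{P_s}(z^k,\mathcal Z^*)$, which by Lemma \ref{lem:property}(a) (non-expansiveness of the iterates toward every optimal solution, applied to the projection of $z^k$) is non-increasing in $k$. The first step is to establish a restart-free ``amortized'' inequality: for any two indices $j < k$, a bound of the form $\mathrm{dist}_{P_s}(z^k,\mathcal Z^*) \le \frac{C}{\alpha\sqrt{k-j}}\,\mathrm{dist}_{P_s}(z^j,\mathcal Z^*)$ for an explicit absolute constant $C$. To get this, I would start the sublinear estimate of Theorem \ref{thm:thm-linear-z}(a) (i.e., $\|z^{k+1}-z^k\|_{P_s} \le \mathrm{dist}_{P_s^{-1}}(0,\mathcal F(z^k)) \le \mathrm{dist}_{P_s}(z^j,\mathcal Z^*)/\sqrt{k-j}$ when the iteration count is measured from $j$) together with the lower bound $\mathrm{dist}_{P_s^{-1}}(0,\mathcal F(z^k)) \ge \rho_r(z^k) \ge \alpha\,\mathrm{dist}(z^k,\mathcal Z^*)$, which holds because the problem is $\alpha$-sharp on $\mathcal S \ni z^k$ and because $\mathrm{dist}_{P_s^{-1}}(0,\mathcal F(z))\ge \rho_r(z)$ (the inequality used in the proof of Lemma \ref{lem:better-hoffman}(c), citing \cite[Proposition 1]{lu2021linear}). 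Converting the $\ell_2$ distance to the $P_s$ distance via Lemma \ref{lem:lem-norm} costs only absolute constants. Chaining these, and using monotonicity of $\mathrm{dist}_{P_s}(z^k,\mathcal Z^*)$ in $k$ to pick the best midpoint, yields the amortized bound with, say, $C = 2e$ or similar.

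The second step is the block/restart bookkeeping. Set $\tau := \lceil 4e/(s\alpha)^2\rceil$. Using the amortized inequality over a window of length $\tau$ (from $j$ to $j+\tau$), I get $\mathrm{dist}_{P_s}(z^{j+\tau},\mathcal Z^*) \le \frac{1}{\sqrt e}\,\mathrm{dist}_{P_s}(z^j,\mathcal Z^*)$ once the constants are arranged so that $C^2/(\alpha^2 \tau) \le 1/e$; the choice $\tau = \lceil 4e/(s\alpha)^2 \rceil$ absorbs the extra $1/s^2$ factor coming from the two applications of Lemma \ref{lem:lem-norm}. Iterating this geometric decay over $\lfloor k/\tau \rfloor$ full blocks gives $\mathrm{dist}^2_{P_s}(z^k,\mathcal Z^*) \le e^{-\lfloor k/\tau\rfloor}\,\mathrm{dist}^2_{P_s}(z^0,\mathcal Z^*)$, and then using $\lfloor k/\tau\rfloor \ge k/\tau - 1$ together with monotonicity on the leftover partial block produces the stated bound $\exp(1 - k/\tau)\,\mathrm{dist}^2_{P_s}(z^0,\mathcal Z^*)$.

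The main obstacle is getting the constants to line up cleanly: the sublinear rate in Theorem \ref{thm:thm-linear-z}(a) is stated from iteration $0$, so I must re-anchor it at an arbitrary start index $j$ (legitimate because PDHG is time-homogeneous and Lemma \ref{lem:property} applies to the shifted sequence), and I must be careful that the sharpness lower bound $\alpha\,\mathrm{dist}(z^k,\mathcal Z^*)\le\rho_r(z^k)\le\mathrm{dist}_{P_s^{-1}}(0,\mathcal F(z^k))$ holds for the specific radius $r$ and that $z^k$ stays in $\mathcal S$ for all $k$ (guaranteed by hypothesis). The two conversions between $\|\cdot\|_2$ and $\|\cdot\|_{P_s}$ each introduce a factor of $2$ or $1/2$, and these, plus the midpoint-selection factor, are exactly what the $4e$ in $\lceil 4e/(s\alpha)^2\rceil$ is designed to swallow; checking that no factor is dropped is the delicate part, but it is routine once the chain of inequalities is written out.
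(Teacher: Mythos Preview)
Your proposal is correct and follows essentially the same route as the paper: combine the re-anchored sublinear bound $\mathrm{dist}_{P_s^{-1}}(0,\mathcal F(z^k))\le \mathrm{dist}_{P_s}(z^j,\mathcal Z^*)/\sqrt{k-j}$ (from \cite[Theorem 1]{lu2022infimal}) with the sharpness inequality $\mathrm{dist}_{P_s}(z^k,\mathcal Z^*)\le \tfrac{2}{s\alpha}\,\mathrm{dist}_{P_s^{-1}}(0,\mathcal F(z^k))$ (obtained from sharpness, \cite[Proposition~1]{lu2021linear}, and two applications of Lemma~\ref{lem:lem-norm}) to get a $1/e$ contraction over windows of length $\tau=\lceil 4e/(s\alpha)^2\rceil$, then iterate and finish with $\lfloor k/\tau\rfloor\ge k/\tau-1$. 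One small correction: your amortized constant is $C=2/s$, not an absolute constant like $2e$; the ``midpoint selection via monotonicity'' step you mention is unnecessary, since the direct chain already gives the window contraction without it.
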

\begin{proof}

By \cite[Proposition 1]{lu2021linear} and Lemma \ref{lem:lem-norm}, we know the $\alpha$-sharpness condition implies that the following equation holds for any $k\geq 0$
\begin{equation}
    \mathrm{dist}_{P_s}(z^k,\mathcal Z^*)\leq \frac{2}{s\alpha}\mathrm{dist}_{P_s^{-1}}(0,\mathcal F(z^k))
\end{equation}

Then the proof is similar to \cite[Theorem 2]{lu2022infimal}. For any iteration $k\geq 1$, suppose $c\left\lceil \frac{4e}{s^2\alpha^2}\right\rceil \leq k < (c+1)\left\lceil \frac{4e}{s^2\alpha^2}\right\rceil$ for a non-negative integer $c$.
\begin{align*}
    \begin{split}
        \mathrm{dist}^2_{P_s}(z^k,\mathcal Z^*) & \ \leq \frac{4}{s^2\alpha^2}\mathrm{dist}^2_{P_s^{-1}}(0,\mathcal F(z^k)) \\
        & \ \leq \frac{4}{s^2\alpha^2}\frac{1}{\lceil 4e/(s^2\alpha^2) \rceil}\mathrm{dist}^2_{P_s}(z^{k-\lceil 4e/(s\alpha)^2 \rceil},\mathcal Z^*) \\
        & \ \leq \frac{1}{e}\mathrm{dist}^2_{P_s}(z^{k-\lceil 4e/(s\alpha)^2 \rceil},\mathcal Z^*) \\
        & \ \leq ... \\
        & \ \leq \pran{\frac 1e}^c \mathrm{dist}^2_{P_s}(z^{k-c\lceil 4e/(s\alpha)^2 \rceil},\mathcal Z^*) \\
        & \ \leq \pran{\frac 1e}^c \mathrm{dist}^2_{P_s}(z^{0},\mathcal Z^*) \\
        & \ \leq \exp\pran{1-\frac{k}{\left\lceil 4e/(s\alpha)^2\right\rceil}}\mathrm{dist}^2_{P_s}(z^{0},\mathcal Z^*)
    \end{split}
\end{align*}
where the second inequality follows from \cite[Theorem 1]{lu2022infimal}.
\end{proof}

\begin{proof}[Proof of Theorem \ref{thm:thm-linear-z}]
(1) The result follows from \cite[Theorem 1]{lu2022infimal}.

(2) From Lemma \ref{lem:linear-converge}, we have
\begin{equation}\label{eq:eq-linear-IDS}
    \mathrm{dist}_{P_s}(z^k,\mathcal Z^*)\leq \exp\pran{\frac 12-\frac{k}{2\lceil 4e/(s\alpha)^2\rceil}}\mathrm{dist}_{P_s}(z^0,\mathcal Z^*) \ .
\end{equation}

Therefore we reach
\begin{align*}
    \begin{split}
         \Vert z^k-z^* \Vert_{P_s} & \ \leq 2\mathrm{dist}_{P_s}(z^k,\mathcal Z^*) \\
         & \ \leq 2\exp\pran{\frac 12-\frac{k}{2\left\lceil 4e/(s\alpha)^2\right\rceil}}\mathrm{dist}_{P_s}(z^0,\mathcal Z^*)\\
         & \ \leq 4\exp\pran{-\frac{k}{2\left\lceil 4e/(s\alpha)^2\right\rceil}}\mathrm{dist}_{P_s}(z^0,\mathcal Z^*)\ ,
    \end{split}
\end{align*}
where the first inequality is exactly Lemma \ref{lem:lem-dist-opt} and the second one utilizes \eqref{eq:eq-linear-IDS}.

\end{proof}


    

\section{Examples to illustrate the looseness of global sharpness constant}\label{app:example}
In this section, we present a series of simple LP instances which exhibits very bad global sharpness behaviors (so as the Hoffman constant), whereas the three quantities we use in the analysis, $\delta$, $\alpha_{L_1}$ and $\alpha_{L_2}$, are of reasonable sizes. This indicates that the previous complexity result based on Hoffman constant argument~\cite{applegate2023faster} can be arbitrarily loose, and may not be able to explain the behaviors of the algorithm.

\begin{figure}[h!]
\centering
  \hspace{-1cm}
  \begin{tikzpicture}[scale=0.7,every node/.style={scale=1.5}]
        \begin{axis}[xlabel={},ylabel={}, ytick={0}, xtick={0}, yticklabels={}, xticklabels={}, extra x ticks={0, 1, 5},extra x tick labels={$0$, $1$, $1+1/\kappa$},  extra y ticks={-0.25, 0, 1}, extra y tick labels={$-\kappa$, $0$, $1$},legend pos=outer north east, legend cell align=left, xmin=-0.25, xmax=5.5, ymin=-0.5, ymax=1.25, y=2.8cm,  x=2.8cm,]
        \addplot[forget plot,color=black,line width=1.5pt, fill=gray!40] coordinates {(0,-0.25) (0,0) (0,1) (5,1) (0,-0.25)};
        \addplot[-Stealth, color=black,line width=1.5pt] coordinates {(5.25,0.8) (5.25,1.2)};
        \end{axis}
        \end{tikzpicture}
  \caption{Illustration of the LP instance \eqref{eq:hoffman-bad}}
  \label{fig:hoffman-bad}
\end{figure}
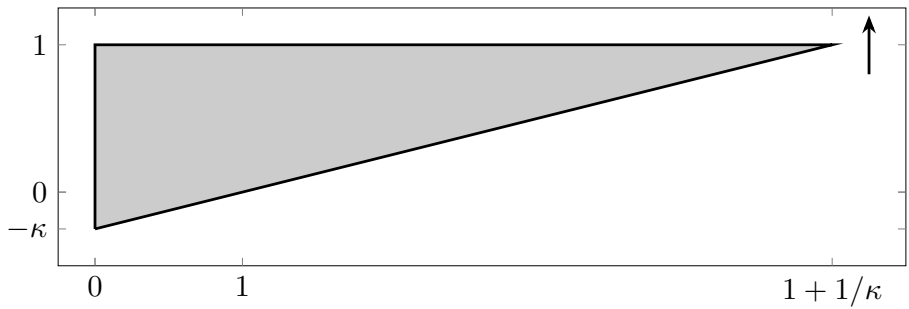

Consider a two dimension dual LP parameterized by $\kappa>0$:
\begin{align}\label{eq:hoffman-bad}
    \begin{split}
        & \ \max \; y_2 \\
        & \ \;\; \mathrm{s.t.}\;\; y_1\geq 0,\; y_2\leq 1 \\ 
        & \ \quad\quad\;\; {\kappa}y_1- y_2 \leq \kappa \ ,
    \end{split}
\end{align}
where the primal form \eqref{eq:standardform} is given by
\begin{equation}\label{eq:hoffman-data}
    A=\begin{pmatrix}
        0 & -1 & \kappa \\ 1 & 0 &-1
    \end{pmatrix},\;\qquad b=\begin{pmatrix}
        0 \\ 1
    \end{pmatrix},\;\qquad c=\begin{pmatrix}
        1 \\ 0\\ \kappa
    \end{pmatrix}.
\end{equation}

The feasible region of \eqref{eq:hoffman-bad} is plotted in Figure \ref{fig:hoffman-bad}. It is easy to check that there are multiple dual optimal solutions given by $y^*=(y_1,1)$ with $0\leq y_1\leq 1+\frac{1}{\kappa}$, and the optimal primal solution is unique given by $x^*=(1,0,0)$. Denote $\alpha$ the sharpness constant (i.e., the reciprocal of the Hoffman constant) of the KKT system
\begin{equation*}
    Ax=b,\; A^Ty\leq c,\; x\geq 0,\; \frac 1R (c^Tx-b^Ty)\leq 0 \ ,
\end{equation*}
where $(A,b,c)$ is given in \eqref{eq:hoffman-data} and $R$ is essentially the distance between initial point to origin. This is the term that appears in the complexity of previous results~\cite{applegate2023faster}. Table \ref{tab:hoffman-bad} presents bounds on $\alpha$, $\alpha_{L_{1}}$, $\alpha_{L_{2}}$ and $\delta$ for the LP when $\kappa$ is set to $10^{-10}$. The value of $\alpha$ can be obtained by noticing that $\alpha\le \kappa$ (see Lemma \ref{lem:kappaalpha} for a proof). The bound of $\alpha_{L_1}$ and $\alpha_{L_2}$ are obtained from the algorithm presented in \cite{pena2023easily}. The bound on $\delta$ can be computed via Lemma \ref{prop:hoffman-bad-delta}. As we can see, for this specific example, the value of $\alpha$ is tiny, while the value of $\alpha_{L_{1}}$, $\alpha_{L_{2}}$ and $\delta$ are of reasonable sizes. 

\begin{table}[h]
\centering
\begin{tabular}{|c|c|c|c|c|}
\toprule
$\kappa$ & $\alpha$     & $\alpha_{L_1}$ & $\alpha_{L_2}$ & $\delta$ \\ \midrule
 $=10^{-10}$ & $\leq 10^{-10}$   &  $\geq 0.00821$   &  $\geq 0.0276$ & $\geq 0.363$ \\ \bottomrule
\end{tabular}

\caption{Comparison of global sharpness and metrics proposed for small $\kappa$}
\label{tab:hoffman-bad}
\end{table}

\begin{lem}\label{lem:kappaalpha}
$\alpha\leq {\kappa}$.
\end{lem}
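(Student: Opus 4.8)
The plan is to exhibit a single point $z = (x,y)$ that violates the KKT system by only a small residual while being far from the optimal set $\mathcal{Z}^*$, and then read off the bound on the sharpness constant directly from the defining inequality $\alpha\,\mathrm{dist}(z,\mathcal{Z}^*) \le \|(\text{residual})\|_2$. Concretely, I would keep the primal part at the (unique) optimum $x^* = (1,0,0)$ so that the equality constraints $Ax = b$ and the nonnegativity $x \ge 0$ are satisfied exactly, and also the complementarity-type constraint $\tfrac{1}{R}(c^Tx - b^Ty) \le 0$ is harmless for suitable $y$. The only slack to exploit is the dual feasibility constraint $A^Ty \le c$, which for the data in \eqref{eq:hoffman-data} reads $y_2 \le 1$, $-y_1 \le 0$, and $\kappa y_1 - y_2 \le \kappa$.

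The key observation is that the dual optimal face is the segment $y^* = (y_1, 1)$ with $0 \le y_1 \le 1 + 1/\kappa$, so the binding constraint that ``caps'' this segment at the right end is $\kappa y_1 - y_2 \le \kappa$ — and its coefficient in $y_1$ is exactly $\kappa$, which is tiny. So I would take $y = (y_1, 1)$ with $y_1$ pushed slightly past the endpoint, say $y_1 = 1 + 1/\kappa + t$ for some $t > 0$. Then $y_2 = 1$ keeps $y_2 \le 1$ tight (residual $0$), $-y_1 \le 0$ holds, the gap term $c^Tx - b^Ty = 1 - 1 = 0$ is fine, and the only violated constraint is $\kappa y_1 - y_2 = \kappa(1 + 1/\kappa + t) - 1 = \kappa t$, giving a residual of exactly $\kappa t$. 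Meanwhile $z = (x^*, y)$ has $\mathrm{dist}(z, \mathcal{Z}^*) \ge t$ because the nearest optimal point in the $y$-coordinate differs by at least $t$ (projecting onto the segment in the relevant direction), and the primal part is already optimal. Plugging into the sharpness inequality, $\alpha \cdot t \le \alpha \cdot \mathrm{dist}(z,\mathcal{Z}^*) \le \kappa t$, hence $\alpha \le \kappa$ after dividing by $t$.

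The main technical point to get right is the distance lower bound $\mathrm{dist}(z,\mathcal{Z}^*) \ge t$: one must be careful that $\mathcal{Z}^*$ is the full primal-dual optimal set (with the primal part pinned to $x^*=(1,0,0)$ and the dual part the segment described above), and argue that moving from $y = (1+1/\kappa+t, 1)$ back into that segment costs at least $t$ in Euclidean norm — which is immediate since the closest point on the segment is its endpoint $(1+1/\kappa, 1)$, at distance exactly $t$. A secondary subtlety is making sure $t$ can be taken small enough (or the argument scale-invariant enough) that $z$ stays inside whatever ball of radius $R$ is implicit in the definition of the sharpness constant; since the bound $\alpha \le \kappa$ does not depend on $t$, one simply picks $t$ as small as needed. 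I do not expect any real obstacle here — this is a one-point certificate argument — beyond bookkeeping of which constraints are active.
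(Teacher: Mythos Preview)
Your proposal is correct and is essentially identical to the paper's own proof: the paper also fixes $x=(1,0,0)$, takes $y=\bigl(\tfrac{1+\kappa}{\kappa}+\zeta,\,1\bigr)=\bigl(1+\tfrac{1}{\kappa}+\zeta,\,1\bigr)$, computes the single nonzero residual $\kappa\zeta$ from the constraint $\kappa y_1-y_2\le\kappa$, and uses $\mathrm{dist}(z,\mathcal Z^*)=\zeta$ to conclude $\alpha\le\kappa$. Your $t$ is their $\zeta$, and every step matches.
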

\begin{proof}
    By definition of sharpness constant, it holds for any $z$ that
    \begin{equation*}
        \alpha\mathrm{dist}(z,\mathcal Z^*)\leq \left\Vert \begin{pmatrix}
        Ax-b \\ [A^Ty-c]^+ \\ \frac 1R [c^Tx-b^Ty]^+
    \end{pmatrix}  \right\Vert_2 \ ,
    \end{equation*}
    where $\mathcal Z^*=\{(1,0,0,y_1,1)\;|\; 0\leq y_1\leq 1+\frac{1}{\kappa}\}$.   
    Then for any $\zeta>0$, consider $z=(1,0,0,\frac{1+\kappa}{\kappa}+\zeta,1)$ and we have
    \begin{equation*}
        \alpha^2\zeta^2\leq \pran{\kappa\pran{\frac{1+\kappa}{\kappa}+\zeta}-1-\kappa}^2 \ , 
    \end{equation*}
    which implies
    \begin{equation*}
        \alpha^2\leq \kappa^2 \ .
    \end{equation*}
\end{proof}
\begin{lem}\label{prop:hoffman-bad-delta}
Suppose $\kappa\leq 0.1$ and the initial point $z^0$ satisfies $\|z^0-\tilde z\|_2\leq 1.5$, where $\tilde z=(1,0,0,4,1)\in \mathcal Z^*$. Then it holds that
\begin{equation*}
        \delta \geq \frac{4}{11} \ .
    \end{equation*}
\end{lem}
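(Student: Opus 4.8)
The plan is to pin down the converging optimal solution $z^*$ precisely enough to evaluate the non-degeneracy metric $\delta$ of Definition~\ref{def:delta} at $z^*$, and then bound its three constituent terms. As noted after \eqref{eq:hoffman-data}, the primal optimum $x^*=(1,0,0)$ is unique and the dual optimal set is the segment $\mathcal Z^*=\{(1,0,0,t,1)\;:\;0\le t\le 1+1/\kappa\}$, so $z^*=(1,0,0,t^*,1)$ for some $t^*\in[0,1+1/\kappa]$, and since $\kappa\le 0.1<1/3$ the point $\tilde z=(1,0,0,4,1)$ indeed lies in $\mathcal Z^*$. The main work is to show that $t^*$ is forced to stay close to $4$.

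For this I would invoke non-expansiveness. By Lemma~\ref{lem:property}(a) applied with the optimal point $\tilde z$, the sequence $\|z^k-\tilde z\|_{P_s}$ is non-increasing in $k$, and by Lemma~\ref{lem:property}(b) $z^k\to z^*$, so $\|z^*-\tilde z\|_{P_s}\le\|z^0-\tilde z\|_{P_s}$. Since $z^*-\tilde z$ has every coordinate zero except the $y_1$-coordinate $t^*-4$, a direct computation with $P_s$ gives $\|z^*-\tilde z\|_{P_s}^2=\tfrac1s(t^*-4)^2$, whereas $\|z^0-\tilde z\|_{P_s}^2\le\lambda_{\max}(P_s)\|z^0-\tilde z\|_2^2=\bigl(\tfrac1s+\|A\|_2\bigr)\|z^0-\tilde z\|_2^2$. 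Using $s\|A\|_2\le\tfrac12$ (from $s\le\tfrac1{2\|A\|_2}$) and $\|z^0-\tilde z\|_2\le\tfrac32$, this yields $(t^*-4)^2\le\tfrac32\cdot\tfrac94=\tfrac{27}8$, hence $t^*\in\bigl[4-\tfrac{3\sqrt6}4,\,4+\tfrac{3\sqrt6}4\bigr]\subset(2.1,\,5.84)$.

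It then remains to compute $\delta$ at $z^*$. With $y^*=(t^*,1)$ the reduced-cost vector is $c-A^Ty^*=(0,\,t^*,\,1+\kappa(1-t^*))$ and $x^*=(1,0,0)$, so on the range above (where $t^*>0$ and $1+\kappa(1-t^*)=1-\kappa(t^*-1)>0$) the partition of Definition~\ref{def:partition} is $B_1=\{1\}$, $N=\{2,3\}$, $B_2=\emptyset$. The eigenvalues of $AA^T$ are $1$ and $2+\kappa^2$, so $\|A\|_2=\sqrt{2+\kappa^2}$, and Definition~\ref{def:delta} gives $\delta=\min\bigl\{t^*/\|A\|_2,\;(1+\kappa(1-t^*))/\|A\|_2,\;1\bigr\}$. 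The binding term is $(1-\kappa(t^*-1))/\sqrt{2+\kappa^2}$; I would bound it below using $\kappa\le\tfrac1{10}$ and $t^*-1\le 3+\tfrac{3\sqrt6}4$ to get $1-\kappa(t^*-1)\ge\tfrac{28-3\sqrt6}{40}$, together with $\sqrt{2+\kappa^2}\le\sqrt{2.01}$, and verify numerically that $\tfrac{28-3\sqrt6}{40}\ge\tfrac4{11}\sqrt{2.01}$, i.e. this term is $\ge\tfrac4{11}$; the other two terms $t^*/\|A\|_2$ and $1$ are at least $1$. Hence $\delta\ge\tfrac4{11}$.

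The main obstacle is quantitative rather than conceptual: the final comparison with $4/11$ is tight (the margin is on the order of $10^{-3}$), so the numerical constants — $s\|A\|_2\le\tfrac12$, $\|z^0-\tilde z\|_2\le\tfrac32$, $\|A\|_2=\sqrt{2+\kappa^2}$, and the value of $\tfrac{3\sqrt6}4$ — must be tracked carefully with no slack wasted. The one genuinely new ingredient is the observation that non-expansiveness forces the limit $z^*$ into a $\Theta(1)$-neighborhood of $\tilde z$ along the optimal segment, which keeps the non-degenerate reduced cost $c_3-A_3^Ty^*$ bounded away from $0$ even though the optimal set itself has length $1+1/\kappa\to\infty$.
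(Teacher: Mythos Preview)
Your argument is correct and follows the same two-step strategy as the paper: use non-expansiveness of PDHG with respect to the optimal point $\tilde z$ to trap the first dual coordinate $t^*=y_1^*$ in a bounded interval, then read off the partition $N=\{2,3\}$, $B_1=\{1\}$, $B_2=\emptyset$ and lower-bound $\delta$.

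The difference is in the sharpness of the constants. The paper converts between $\|\cdot\|_2$ and $\|\cdot\|_{P_s}$ via the generic Lemma~\ref{lem:lem-norm}, obtaining $\|z^*-\tilde z\|_2\le 2\|z^0-\tilde z\|_2\le 3$ and hence $t^*\in[1,7]$, and then bounds $\|A\|_2\le\|A\|_F\le 2$. With those looser estimates the paper's own computation ends at $\delta\ge \tfrac{4}{22}=\tfrac{2}{11}$, not the $\tfrac{4}{11}$ stated in the lemma. You instead exploit that $z^*-\tilde z$ lives purely in the $y$-block (so $\|z^*-\tilde z\|_{P_s}^2=\tfrac1s(t^*-4)^2$ exactly) and compute $\|A\|_2=\sqrt{2+\kappa^2}$ from the eigenvalues of $AA^T$; these two sharpenings shrink the interval for $t^*$ to roughly $(2.16,5.84)$ and make the final comparison $\tfrac{28-3\sqrt6}{40}\ge\tfrac4{11}\sqrt{2.01}$ go through (by about $7\times10^{-4}$). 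So your route actually delivers the stated bound, whereas the paper's write-up, taken literally, falls short by a factor of $2$.
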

\begin{proof}
From the non-expansiveness of PDHG iterates and Lemma \ref{lem:lem-norm}, we have
\begin{equation*}
    \|z^k-\tilde z\|_2\leq \sqrt{2s}\|z^k-\tilde z\|_{P_s}\leq \sqrt{2s}\|z^0-\tilde z\|_{P_s} \leq 2\|z^0-\tilde z\|_2\leq 3 \ .
\end{equation*}
Denote $z^*=(x^*,y^*)$ the converging optimal point of $\{z^k\}_{k=0}^\infty$. Then it holds that
\begin{equation*}
    0<1\leq y_1^*\leq 7<1+\frac{1}{\kappa} \ .
\end{equation*}
Thus
\begin{equation*}
    N=\{2,3\},\; B_1=\{1\},\;B_2=\O \ ,
\end{equation*}
and
\begin{equation*}
    c_2-A_2^Ty^* \geq 1,\; c_3-A_3^Ty^*\geq \frac{1+\frac{1}{\kappa}-7}{1+\frac{1}{\kappa}}\geq \frac{4}{11},\; x_1^* = 1 \ ,
\end{equation*}
where $\frac{1+\frac{1}{\kappa}-7}{1+\frac{1}{\kappa}}\geq \frac{4}{11}$ by $\kappa\leq 0.1$. Note that $\|A\|_2\leq \|A\|_F\leq 2$ and we reach at
\begin{equation*}
    \delta= \min\left\{\min_{i\in N} \frac{c_i-A_i^Ty^*}{\|A\|_2},\;\min_{i\in B_1} x_i^*\right\} \geq \frac{4}{22}=\frac{2}{11}\ .
\end{equation*}
\end{proof}

\end{document}